\theoremstyle{plain}
\newtheorem{theorem}{Theorem}[section]
\newtheorem{proposition}[theorem]{Proposition}
\newtheorem{lemma}[theorem]{Lemma}
\newtheorem{remark}[theorem]{Remark}
\newtheorem{definition}[theorem]{Definition}
\newcommand{\R}{\mathbb R}
\newcommand{\N}{\mathbb N}
\newcommand{\Z}{\mathbb Z}
\newcommand{\C}{\mathbb C}
\newcommand{\B}{\mathcal B}
\newcommand{\E}{\mathcal E}
\newcommand{\X}{\mathcal X}
\newcommand{\CP}{\mathcal P}
\newcommand{\CC}{\mathcal C}
\DeclareMathOperator{\sgn}{sgn}
\DeclareFontFamily{U}{matha}{\hyphenchar\font45}
\DeclareFontShape{U}{matha}{m}{n}{
      <5> <6> <7> <8> <9> <10> gen * matha
      <10.95> matha10 <12> <14.4> <17.28> <20.74> <24.88> matha12
      }{}
\DeclareSymbolFont{matha}{U}{matha}{m}{n}
\DeclareFontFamily{U}{mathx}{\hyphenchar\font45}
\DeclareFontShape{U}{mathx}{m}{n}{
      <5> <6> <7> <8> <9> <10>
      <10.95> <12> <14.4> <17.28> <20.74> <24.88>
      mathx10
      }{}
\DeclareSymbolFont{mathx}{U}{mathx}{m}{n}
\DeclareMathDelimiter{\VERT}{0}{matha}{"7E}{mathx}{"17}
\title{Existence and instability of steady states for a triangular cross-diffusion system: a computer-assisted proof}
\author{Maxime Breden \thanks{CMLA, ENS Cachan, CNRS, Université Paris-Saclay, 61 avenue du Pr\'esident Wilson, 94230 Cachan, France \& D\'epartement de Math\'ematiques et de Statistique, Universit\'e Laval, 1045 avenue de la M\'edecine, Qu\'ebec, QC, G1V 0A6, Canada. maxime.breden@ens-cachan.fr}
\and Roberto Castelli \thanks{VU University Amsterdam, Department of Mathematics, De Boelelaan 1081, 1081 HV Amsterdam, The Netherlands. r.castelli@vu.nl}
}
\date{}
\begin{document}
\maketitle

\begin{abstract}
In this paper, we present and apply a computer-assisted method to study steady states of a triangular cross-diffusion system. Our approach consist in an \emph{a posteriori} validation procedure, that is based on using a fixed point argument around a numerically computed solution, in the spirit of the Newton-Kantorovich theorem. It allows us to prove the existence of various non homogeneous steady states for different parameter values. In some situations, we get as many as 13 coexisting steady states. We also apply the \emph{a posteriori} validation procedure to study the linear stability of the obtained steady states, proving that many of them are in fact unstable.
\end{abstract}

\begin{center}
{\bf \small Key words} 
{\small Rigorous numerics $\cdot$ Cross-diffusion $\cdot$ Steady states \\
Eigenvalue problem $\cdot$ Spectral analysis $\cdot$ Fixed point argument}
\end{center}

\begin{center}
{\bf \small 2010 AMS Subject Classification} {\small 35K59 $\cdot$ 35Q92 $\cdot$ 65G20 $\cdot$ 65N35}
\end{center}

\section{Introduction}

The primary goal of describing physical systems with mathematical models is to be able to explain and predict natural phenomena, within some range of approximation.  In some circumstances the mathematical prediction and the experimental evidence don't agree, and a more trustful model is then required. Typically, one can add nonlinear or non homogeneous terms to get a more refined model, but this often seriously complicates the mathematical analysis of the system, which can become very hard, if not impossible, to study analytically.
In this situation, numerical simulations allow insight of the phenomena and provide approximate, often very accurate, solutions. Aiming at formulating theorems, a powerful tool to validate approximate solutions into rigorous mathematical statements is provided by the rigorous computational techniques. 

The diffusive Lotka-Volterra system, a well known model for population dynamics to study the competition between two species, is paradigmatic of  the situation discussed above. It consists in the system
\begin{equation}
\label{eq:without_crossdiff}
\left\{
\begin{aligned}
&\frac{\partial u}{\partial t}=d_1\Delta u + (r_1-a_1u-b_1v)u, \quad &\text{on }\R_+\times\Omega,\\
&\frac{\partial v}{\partial t}=d_2\Delta v + (r_2-b_2u-a_2v)v, \quad &\text{on }\R_+\times\Omega,\\
&\frac{\partial u}{\partial n}=0=\frac{\partial v}{\partial n},\quad \text{on }\R_+\times\partial\Omega,
\end{aligned}
\right.
\end{equation}
where $\Omega$ is a bounded domain of $\R^N$, and $u(t,x),v(t,x)\geq 0$ represent the population densities of two species at time $t$ and position $x$. The non negative coefficients $d_i$, $r_i$, $a_i$ and $b_i$ ($i=1,2$) describe the diffusion, the unhindered growth of the species, the intra-specific competition and the inter-specific competition respectively. 

One of the fundamental problems is to determine if and under which assumptions the two species coexist, that converts into proving   the existence or non-existence of stable positive equilibrium solutions. Several  works has been produced to classify and analyse the stability of the equilibria for \eqref{eq:without_crossdiff} and of related systems. We refer for instance to \cite{MimEiFang91} for a short review. Of particular interest for our discussion is the result presented in \cite{KisWei85}.  If the domain $\Omega$ is convex, in that paper it is proved that any spatially non-constant equilibrium solution  of \eqref{eq:without_crossdiff}  is unstable,  if it exists. This  implies that if the two species coexist, their densities must be homogeneous in the whole domain.

However, biological observations suggest that two competing species could coexist by forming pattern to avoid each other (a phenomenom called \emph{spatial segregation}). Therefore we would like the model to exhibit stable non homogeneous steady states, but the quoted result shows that this is excluded (at least for convex domains). We point out that stable non homogeneous equilibria have been shown to exist for non convex domains \cite{MatMim83}, or for systems involving more than two species \cite{Kis82}.

\medskip

In the case of two competing species, to account for the expected stable inhomogeneous steady sates, a generalization of~\eqref{eq:without_crossdiff} was proposed in~\cite{ShiKawTer79}:
\begin{equation}
\label{eq:with_crossdiff}
\left\{
\begin{aligned}
&\frac{\partial u}{\partial t}=\Delta ((d_1+d_{12}v)u) + (r_1-a_1u-b_1v)u, \quad \text{on }\R_+\times\Omega,\\
&\frac{\partial v}{\partial t}=\Delta ((d_2+d_{21}u)v) + (r_2-b_2u-a_2v)v, \quad \text{on }\R_+\times\Omega,\\
&\frac{\partial u}{\partial n}=0=\frac{\partial v}{\partial n},\quad \text{on }\R_+\times\partial\Omega,
\end{aligned}
\right.
\end{equation}
where the added \emph{cross-diffusion} terms $\Delta(uv)$ model that the two species try to avoid each other, by diffusing more when more individuals of the other species are present.

Since its introduction the system~\eqref{eq:with_crossdiff} has been studied extensively, one of the main reason being that it seems to exhibit a much wider variety of steady states than~\eqref{eq:without_crossdiff}, especially non homogeneous ones, in accordance to laboratory experiments. Several numerical studies have been presented, displaying intricate bifurcation diagrams of steady states  (see for instance \cite{IidMimNim06,IzuMim08} and also Figure~\ref{fig:bifurcation_diagram}). Consider for instance the homogenous equilibria 
\begin{equation}
\label{eq:homogeneous_eq}
(u_{eq},v_{eq}):=\left(\frac{r_1a_2-r_2b_1}{a_1a_2-b_1b_2},\frac{r_2a_1-r_1b_2}{a_1a_2-b_1b_2}\right)
\end{equation}
in the {\it strong intra-specific} case, i.e. when $\frac{b_1}{a_2}<\frac{r_1}{r_2}<\frac{a_1}{b_2}$ (the other case $\frac{a_1}{b_2}<\frac{r_1}{r_2}<\frac{b_1}{a_2}$ is known as the \emph{strong inter-specific competition} case). While $(u_{eq},v_{eq})$ is stable for~\eqref{eq:without_crossdiff} for any diffusion coefficient $d_1,d_2\geq 0$, adding strong enough cross-diffusion can destabilize this equilibria, from which new non homogeneous steady states can bifurcate. A link between this \emph{cross-diffusion induced instability} and the standard \emph{Turing instability} for reaction-diffusion systems is made in~\cite{IidMimNim06}.

\medskip

While from one side the addition of these nonlinear cross-diffusion terms yields a more reliable model, on the other it seriously complicates the analytical treatment of the system. Even the existence of global classical solutions of~\eqref{eq:with_crossdiff} (completed with non negative initial data) is a challenging question that is still fairly open. Local in time existence can be obtained by the theory of quasilinear parabolic systems \cite{Ama89}, but to then get bounds that prevent blowups requires restrictions on the coefficients, as for instance $d_{21}=0$ \cite{ChoLuiYam03}. In this particular case, sometimes called \emph{triangular cross-diffusion system}, some recent progresses were also made in~\cite{DevTre15} by combining entropy methods with a 3 component reaction-diffusion system without cross-diffusion (first introduced in~\cite{IidMimNim06}), that is used to approach~\eqref{eq:with_crossdiff}. Entropy methods were also used to improve upon the existing results for the \emph{full} cross-diffusion system, see~\cite{Jun16,DevLepMouTre15} and the references therein.

\medskip

\begin{figure}[h!]
\vspace{-0.5cm}
\centering
\includegraphics[width=12cm]{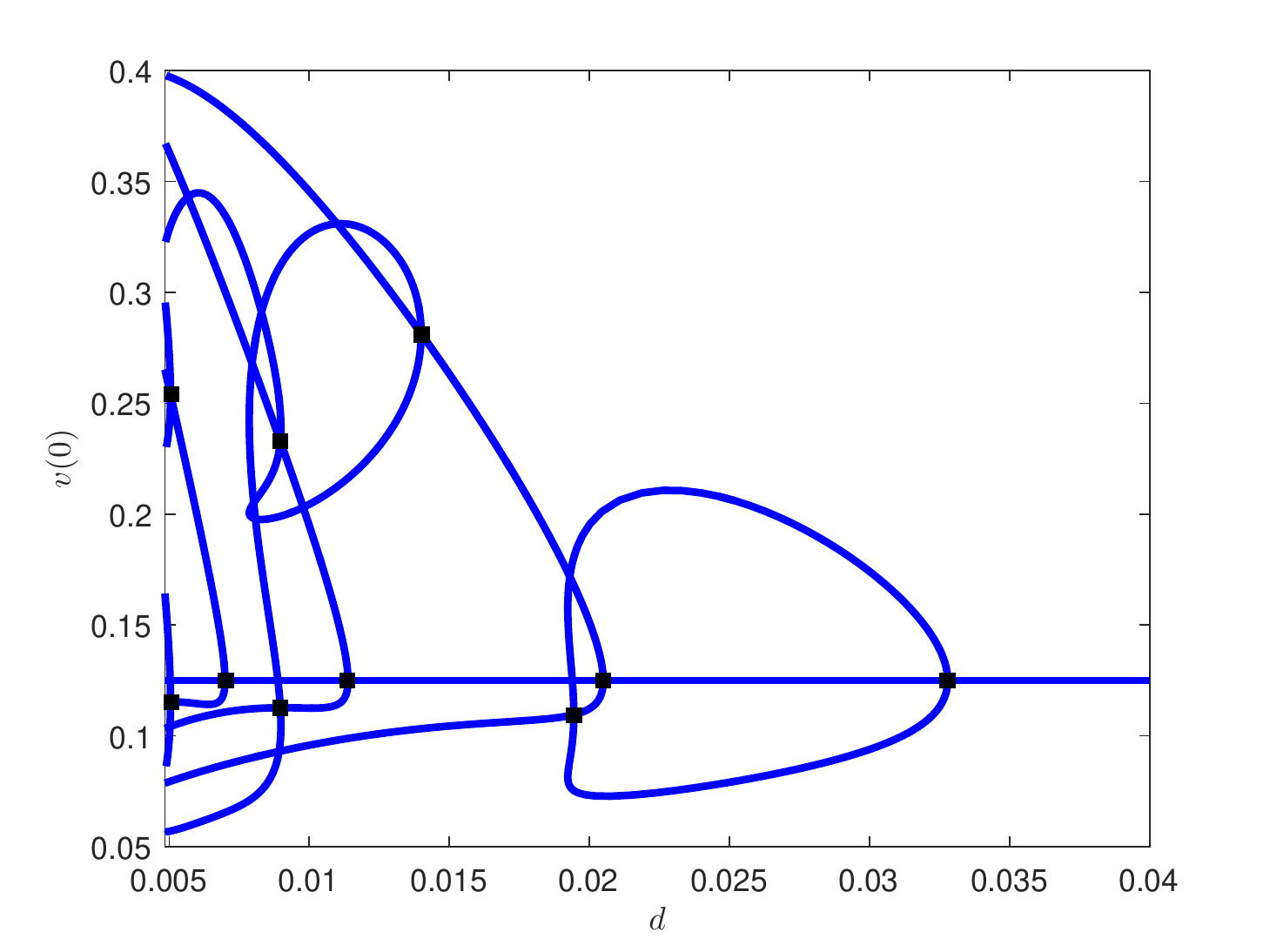}
\vspace{-0.5cm}
\caption{A numerical bifurcation diagram of steady states of~\eqref{eq:with_crossdiff}, in the strong intra-specific case. The space domain $\Omega$ is $(0,1)$, $r_1=5$, $r_2=2$, $a_1=3$, $a_2=3$, $b_1=1$, $b_2=1$, $d_{12}=3$, $d_{21}=0$ and $d_1=d_2=d$ is left as the bifurcation parameter.}
\label{fig:bifurcation_diagram}
\vspace{-0.2cm}
\end{figure}

Existence and stability results for non homogeneous steady states of~\eqref{eq:with_crossdiff} have also been established following different  approaches including bifurcations techniques~\cite{MimKaw80}, singular perturbation techniques~\cite{MimNisTesTsu84,LouNiYot04} and fixed point index theory~\cite{RyuAhn03}. Because of the presence of the cross-diffusion term, the analytical studies are limited to those solutions that are either close the homogeneous steady states, or in very specific parameter ranges. However, the numerically computed bifurcation diagram reveals a very rich structure that includes coexistence of many different steady states for a given set of parameter values, as well as secondary bifurcations. To the best of the author's knowledge, even the existence of these solutions is not yet  proved and it seems  out of reach of purely analytical techniques.

\medskip

The aim of this paper is to prove existence and study the linear stability  of several non-homogenous steady states of~\eqref{eq:with_crossdiff}, significantly far from being perturbations of the homogeneous equilibria, also showing multiplicity of solutions for the same set of parameters. The kind of technique adopted here is often referred to as \emph{validated numerics}, because the goal is to prove the existence of a genuine solution of the problem in a sharp and explicit neighborhood of a numerical one, hence, in this sense, to validate the approximate solution (more details in Section~\ref{sec:validated_numerics}). More precisely,  we follow the so called  {\it radii polynomial } approach, a quite general technique  based on the contraction mapping argument that has been adapted to solve several differential problems in areas ranging from dynamical systems to ordinary and partial differential equations through delay differential equation and chaotic dynamics. This is the first time that rigorous computational techniques are applied to PDE system with cross interactions in the leading differential operator. The cross-diffusion terms are indeed a major technical hurdles, since they enfeeble the smoothing effect of the higher order differential operator.

In this work we restrict ourself  to the triangular case and assume that the space dimension is 1 (i.e. we fix $d_{21}=0$ and $\Omega=(0,1)$). The method can easily be extended to  higher space dimension (say 2 or 3), but of course the computational cost would increase. The generalization to a full cross-diffusion system is less straightforward. Indeed, as mentioned above, it will become apparent in the next sections that the cross-diffusion structure hinders the use of our validation method, and that we take advantage of the triangular configuration to overcome this difficulty. 

\medskip
The system we are dealing with is the following:
\begin{equation}
\label{eq:steady_states}
\left\{
\begin{aligned}
&\!\left((d_1+d_{12} v)u\right)''+(r_1-a_1u-b_1v)u=0, \quad &\text{on }(0,1),\\
&d_2v''+(r_2-b_2u-a_2 v)v=0, \quad &\text{on }(0,1),\\
&u'(0)=u'(1)=0, \\
&v'(0)=v'(1)=0.
\end{aligned}
\right.
\end{equation}
Figure~\ref{fig:bifurcation_diagram} depicts a  bifurcation diagram of solutions of~\eqref{eq:steady_states}, with given values for the parameters $r_i$, $a_i$, $b_i$ and $d_{12}$. This diagram was first obtained numerically in~\cite{IidMimNim06}, using a 3-component system without cross-diffusion that approaches~\eqref{eq:steady_states}.  We point out that even in the somewhat restricted framework with $\Omega=(0,1)$ and $d_{12}=0$, the steady states of~\eqref{eq:with_crossdiff} already manifest very complex and interesting behavior when the parameters vary.

The first result concerns the existence of steady states.
\begin{figure}[h!]
\vspace{-0.3cm}
\centering
\includegraphics[width=12cm]{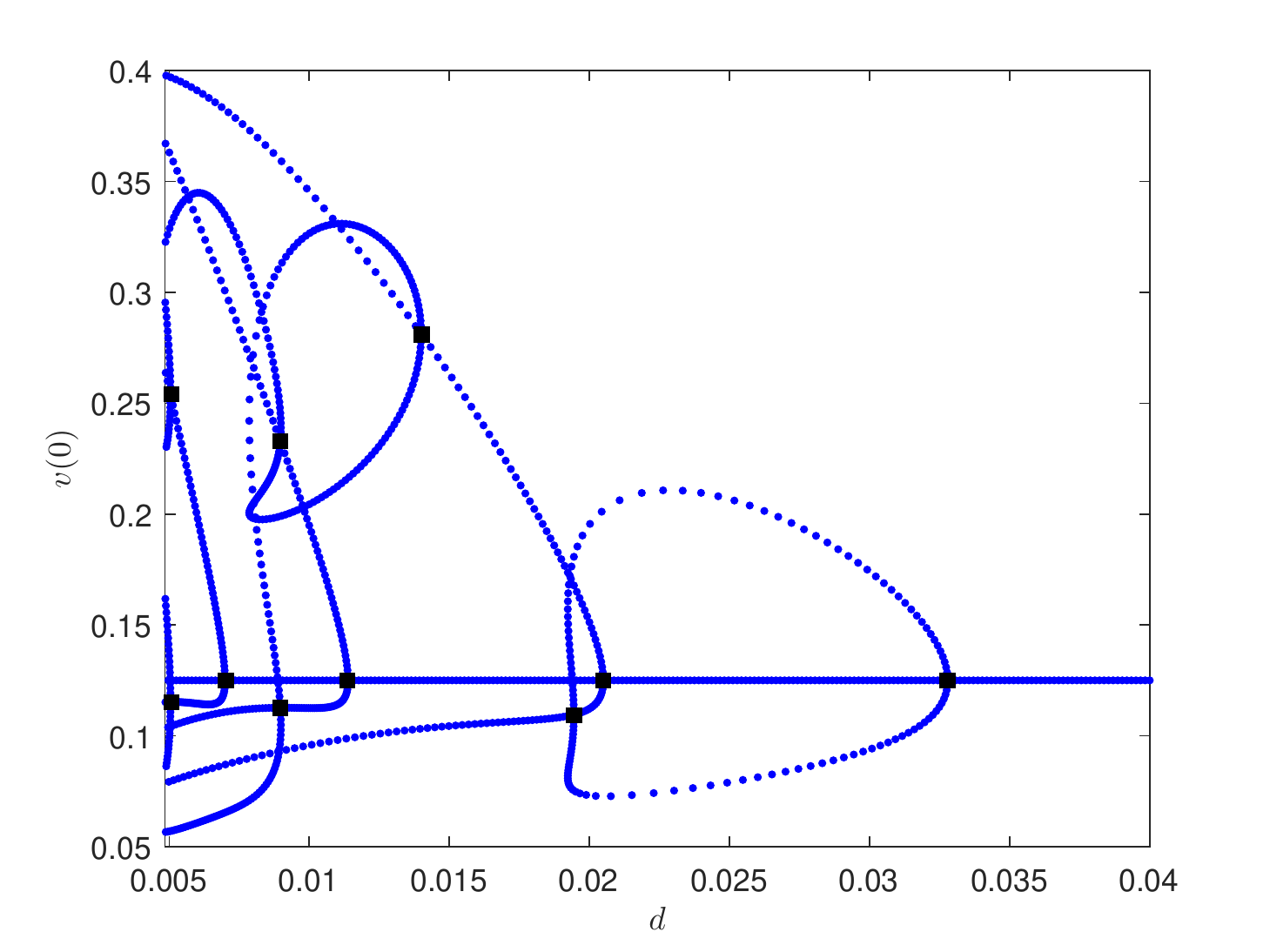}\vspace{-0.5cm}
\caption{Validated bifurcation diagram of solutions of~\eqref{eq:steady_states}. The space domain $\Omega$ is $(0,1)$, $r_1=5$, $r_2=2$, $a_1=3$, $a_2=3$, $b_1=1$, $b_2=1$, $d_{12}=3$, and $d_1=d_2=d$ is left as the bifurcation parameter. Each blue dot represents a proved solution. The black squares indicate bifurcations, while the other \emph{apparent} crossings are just due to the projection (i.e. $v(0)$) we used to represent the solutions.}
\label{fig:proved_diagram}
\vspace{-0.2cm}
\end{figure}

\begin{theorem}
\label{th:exists_13}
Referring to Figure~\ref{fig:proved_diagram}, each bullet represents a solution of~\eqref{eq:steady_states}, for the parameter values $r_1=5$, $r_2=2$, $a_1=3$, $a_2=3$, $b_1=1$, $b_2=1$, $d_{12}=3$, and $d_1=d_2=d$. In particular there exists at least 13 different solutions when $d_1=d_2=0.005$.
\end{theorem}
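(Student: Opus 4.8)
The plan is to recast the boundary value problem \eqref{eq:steady_states} as a zero-finding problem $F(x)=0$ for the Fourier cosine coefficients of $(u,v)$, and then to validate each of the thirteen numerically computed solutions by a Newton--Kantorovich / contraction argument. Because the domain is $(0,1)$ with homogeneous Neumann conditions, I would expand $u(x)=\sum_{k\ge 0} u_k \cos(k\pi x)$ and $v(x)=\sum_{k\ge 0} v_k\cos(k\pi x)$; every such series automatically satisfies $u'(0)=u'(1)=0$ (and likewise for $v$), so the boundary conditions are built in. In these coordinates the second derivative acts diagonally, $\cos(k\pi x)\mapsto -k^2\pi^2\cos(k\pi x)$, the polynomial nonlinearities $u^2$, $uv$, $v^2$ become discrete convolutions (Cauchy products) of the coefficient sequences, and the whole system becomes a countable family of algebraic equations $F(x)=0$ on a Banach space of sequences. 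I would work in a weighted $\ell^1_\nu$ space (geometric weights $\nu^{k}$, anticipating analytic steady states), which is a Banach algebra under convolution — this is what keeps the nonlinear estimates clean.

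The cross-diffusion term is the crux and dictates the formulation. Expanding $\bigl((d_1+d_{12}v)u\bigr)''$ in Fourier space produces, at mode $k$, the contribution $-k^2\pi^2\bigl(d_1 u_k + d_{12}(u\ast v)_k\bigr)$: the top-order $k^2$ growth is now attached to a convolution coupling $u$ to $v$, so the leading linear operator of the $u$-equation is neither diagonal nor coercive in the naive sense, and the usual parabolic smoothing is lost. To restore a diagonal, invertible dominant operator I would introduce the auxiliary unknown $w:=(d_1+d_{12}v)u$ and treat $(u,v,w)$ as variables tied by the polynomial constraint $(d_1+d_{12}v)u-w=0$; the $w$-equation then reads $w''+(r_1-a_1u-b_1v)u=0$ with clean diagonal symbol $-k^2\pi^2$, the $v$-equation $d_2v''+(r_2-b_2u-a_2v)v=0$ already carries standard diffusion $-d_2k^2\pi^2$, and the constraint supplies the invertible constant symbol $d_1$ for $u$. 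This is exactly where the triangular hypothesis $d_{21}=0$ enters: the $v$-component has ordinary diffusion and can be controlled first, after which the multiplier $d_1+d_{12}v$ stays bounded away from zero, making the constraint solvable for $u$ and the whole reformulation well posed.

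With $F(x)=0$ in hand, for each numerical solution $\bar x^{(i)}$ ($i=1,\dots,13$) I would build the fixed-point operator $T_i(x)=x-A_iF(x)$, where $A_i$ is an explicit approximate inverse of $DF(\bar x^{(i)})$: a finite matrix (the numerically inverted truncated Jacobian) on the low modes, glued to the exact inverse of the diagonal tail operators on the high modes. Following the radii polynomial method I would then establish, using interval arithmetic, a defect bound $\|T_i(\bar x^{(i)})-\bar x^{(i)}\|\le Y_i$ and a contraction bound $\|DT_i(x)\|\le Z_i(r)$ valid on $B(\bar x^{(i)},r)$, and verify that the radii polynomial $p_i(r):=Y_i+Z_i(r)\,r-r$ is negative for some admissible $r_i>0$. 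The Banach fixed point theorem then yields a genuine solution $x^{(i)}$ within distance $r_i$ of $\bar x^{(i)}$, which, after undoing the cosine transform, is a classical solution of \eqref{eq:steady_states}; the same procedure applied along the diagram validates the remaining bullets.

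Finally, to conclude that there are at least thirteen distinct solutions at $d=0.005$, I would verify the separation $\|\bar x^{(i)}-\bar x^{(j)}\|>r_i+r_j$ for every pair $i\ne j$, so that the thirteen validated balls are pairwise disjoint and the enclosed solutions are genuinely different. The main obstacle throughout is the loss of smoothing caused by the cross-diffusion: it forces the $w$-reformulation (or, equivalently, a careful direct treatment of the $k^2(u\ast v)_k$ term), it makes the tail estimates entering $Z_i(r)$ substantially more delicate than for a semilinear problem, and it requires keeping the multiplier $d_1+d_{12}v$ positive throughout the validation ball. Obtaining $Y_i$ and $Z_i(r)$ sharp enough that $p_i(r)<0$ — simultaneously for all thirteen solutions, including any pair that lies close together — is where the real work lies.
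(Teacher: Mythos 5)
Your overall strategy coincides with the paper's: expand in cosine series on $\ell^1_\nu$, introduce $w=(d_1+d_{12}v)u$ to restore an unbounded diagonal dominant symbol in the first equation, build $A$ as a numerically inverted truncated Jacobian glued to a diagonal tail, and close with the radii polynomial and interval arithmetic (including the final separation check, which the paper leaves implicit but which is trivially satisfied since the validation radii are $\sim 10^{-11}$ while the solutions differ by $\sim 10^{-2}$). However, there is one step where your formulation, as stated, would break down. You propose to keep $u$ as an unknown tied to $(v,w)$ by the \emph{algebraic} constraint $(d_1+d_{12}v)u-w=0$, and you assert that this constraint ``supplies the invertible constant symbol $d_1$ for $u$'' in the tail. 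In Fourier space the linearization of this constraint in the $u$-direction is $d_1 I + d_{12}(\bar v\ast\cdot)$, a bounded convolution operator that is \emph{not} asymptotically diagonal: its off-diagonal entries do not decay relative to the diagonal as $k\to\infty$ (e.g.\ the entry at $(k,k+1)$ tends to $d_{12}\bar v_1\neq 0$). Consequently, with a diagonal tail inverse of symbol $d_1^{-1}$ (or even $(d_1+d_{12}\bar v_0)^{-1}$), the $Z_1$ bound acquires a term of order $d_{12}\left\Vert \bar v-\bar v_0\right\Vert_\nu/(d_1+d_{12}\bar v_0)$, which for the strongly inhomogeneous states at $d_1=0.005$, $d_{12}=3$ comfortably exceeds $1$, so $P(r)<0$ is unattainable. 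Unlike the $w$- and $v$-rows, where the bounded convolution perturbations are beaten by the $k^2$ diagonal, the constraint row has no unbounded symbol to hide behind.

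The paper circumvents exactly this obstruction by \emph{not} keeping $u$ as an unknown: it introduces $p=1/(d_1+d_{12}v)$, eliminates $u=p\ast w$ from all nonlinearities, and characterizes $p$ through the auxiliary \emph{differential} equation $p'+d_{12}sp^2=0$ (with $s=v'$ as a further unknown and the single scalar normalization $p(0)(d_1+d_{12}v(0))=1$). This converts the bounded, non-diagonally-dominant algebraic relation into an equation whose dominant symbol is $-\pi k$, so every row of $A^{\dag}$ has an unbounded diagonal tail and the convolution perturbations are damped by at least $1/(\pi m)$ in $Z_1$ (see Remark~\ref{rem:diagonal} and Section~\ref{sec:Z1}). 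To repair your version you would either have to adopt this device, or use a genuinely non-diagonal tail for $A$ in the $u$-row (inversion of the multiplication operator by $d_1+d_{12}\bar v$, i.e.\ multiplication by $\bar p$), which reintroduces $p$ anyway and complicates all subsequent norm estimates. The rest of your argument --- defect and contraction bounds, Banach fixed point, positivity of the multiplier on the validation ball, and pairwise disjointness of the thirteen balls --- matches the paper's Proposition~\ref{prop:radii_pol} and would go through once the reformulation is fixed.
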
  

The proof of each steady states also provides precise qualitative  informations about the  solution, in terms of explicit bounds  of the distance (in some function space, see Section~\ref{sec:framework_steady_states}) between the genuine solution that is proved to exist and a numerically computed approximation (more details in Section~\ref{sec:results_steady_states}).

In~\cite{IidMimNim06}, the linear stability of the obtained steady states was also studied (still numerically), suggesting that most of the solutions displayed in the bifurcation diagram of Figure~\ref{fig:proved_diagram} are unstable, while others seems to be stable. In this direction, the second contribution  of this paper is a rigorous computational approach to the  study of the spectral properties of the equilibria presented above.  

\begin{figure}[h!]
\vspace{-0.5cm}
\centering
\includegraphics[width=12cm]{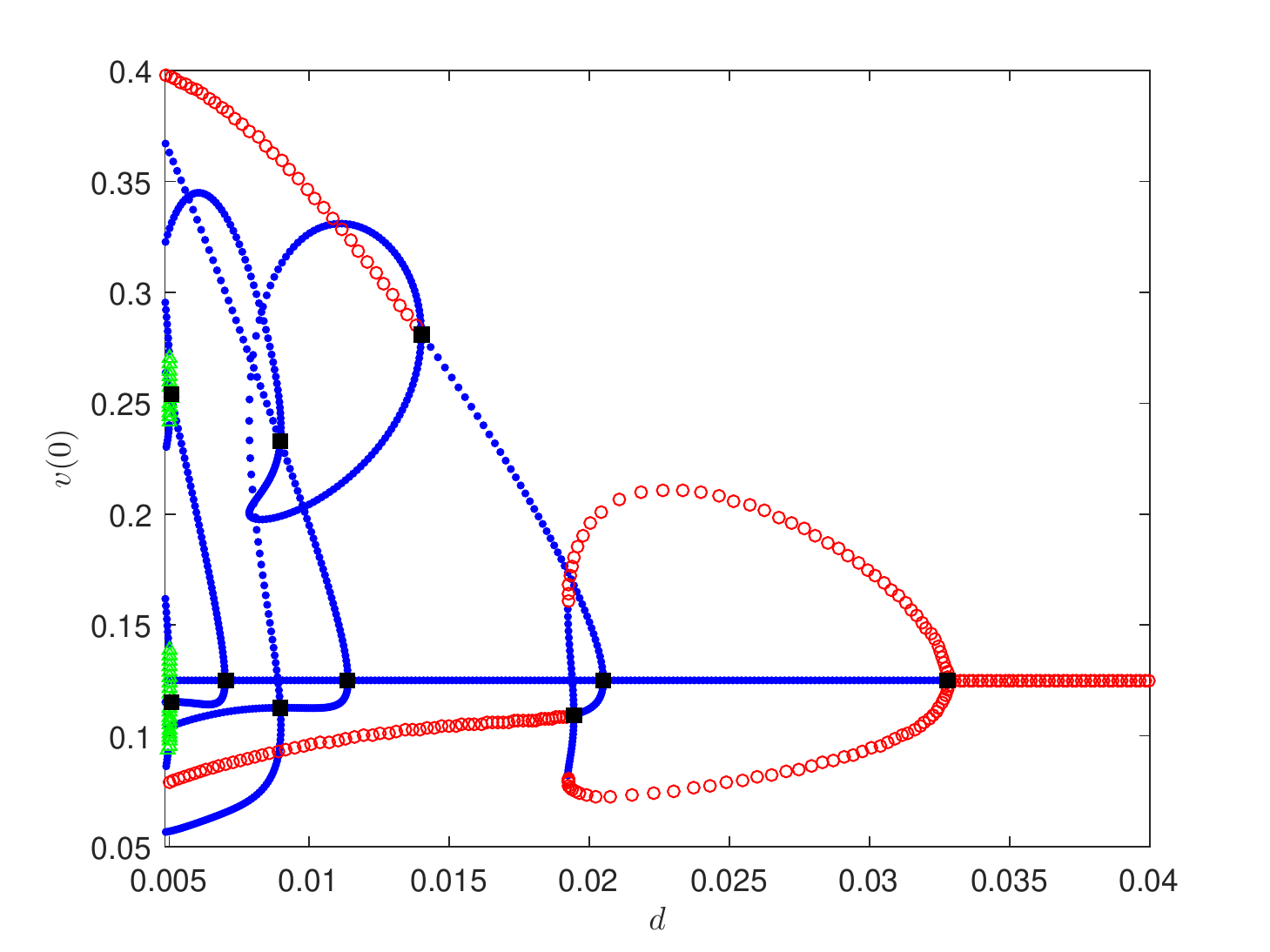}\vspace{-0.5cm}
\caption{Validated bifurcation diagram of solutions of~\eqref{eq:steady_states}. The space domain $\Omega$ is $(0,1)$, $r_1=5$, $r_2=2$, $a_1=3$, $a_2=3$, $b_1=1$, $b_2=1$, $d_{12}=3$, and $d_1=d_2=d$ is left as the bifurcation parameter. Each blue dot represents a proved solution, for which we also proved instability. Each green triangle represents a proved solution, that seems unstable numerically but for which we were not able to prove instability. Each red circle represents a proved solution, that seems stable numerically.}
\label{fig:stability_diagram}
\vspace{-0.2cm}
\end{figure}

\begin{theorem}
\label{th:unstable_11}
Referring to Figure~\ref{fig:stability_diagram}, each blue bullet represents an unstable steady state.
Out of the 13 solutions at parameter value $d=0.005$,  at least 11 are unstable.
\end{theorem}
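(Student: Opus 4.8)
The plan is to deduce instability from the spectrum of the operator obtained by linearizing the parabolic system~\eqref{eq:with_crossdiff} (with $d_{21}=0$, $\Omega=(0,1)$) around each of the steady states already validated in Theorem~\ref{th:exists_13}. Writing $u=\bar u+\phi$, $v=\bar v+\psi$, retaining only first-order terms, and seeking perturbations of the form $(\phi,\psi)e^{\lambda t}$ leads to the linear eigenvalue problem
\begin{equation*}
\left\{
\begin{aligned}
&\left((d_1+d_{12}\bar v)\phi + d_{12}\bar u\,\psi\right)'' + (r_1-2a_1\bar u-b_1\bar v)\phi - b_1\bar u\,\psi = \lambda\phi,\\
&d_2\psi'' + (r_2-b_2\bar u-2a_2\bar v)\psi - b_2\bar v\,\phi = \lambda\psi,
\end{aligned}
\right.
\end{equation*}
with homogeneous Neumann conditions on $\phi,\psi$. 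A steady state is linearly unstable as soon as this problem admits an eigenvalue $\lambda$ with $\mathrm{Re}(\lambda)>0$, so the goal is to exhibit and rigorously enclose one such eigenvalue for each blue bullet. Note that the cross-diffusion terms make the operator non-self-adjoint, so both $\lambda$ and the eigenfunctions are a priori complex.

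The validation follows the same radii-polynomial philosophy used for the steady states. An eigenpair is defined only up to scaling, so I would first close the system by appending a normalization (phase) condition — for instance fixing one Fourier coefficient of the eigenfunction — and treat $(\lambda,\phi,\psi)$, with $\lambda\in\C$, as the unknown of an augmented zero-finding map $F$. Expanding $\phi$ and $\psi$ in cosine series — the natural basis for the Neumann problem on $(0,1)$ — turns $F=0$ into an algebraic system on a suitable sequence space, in which the products $\bar v\phi$, $\bar u\psi$, etc.\ become convolutions of the already-validated cosine coefficients of $\bar u,\bar v$ with those of $\phi,\psi$, whose tails must be controlled. Starting from a numerically computed eigenpair $(\lambda_0,\phi_0,\psi_0)$ and an injective approximate inverse $A$ of $DF$ on the computational projection, one forms the Newton-like fixed-point operator $T=\mathrm{Id}-A\,F$ and derives the bounds $Y$ (defect of the numerical eigenpair) and $Z(r)$ (Lipschitz control of $DT$ on a ball of radius $r$) that constitute the radii polynomial.

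Once a radius $r>0$ is found for which the radii polynomial is negative, the contraction argument yields a genuine eigenpair $(\lambda,\phi,\psi)$ with $\vert\lambda-\lambda_0\vert\le r$ and $(\phi,\psi)\neq 0$. Instability is then concluded by the elementary a posteriori check $\mathrm{Re}(\lambda_0)>r \Rightarrow \mathrm{Re}(\lambda)>0$, carried out for each of the eleven certified blue bullets at $d=0.005$ and along the corresponding branches.

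I expect the main obstacle to be twofold. First, the cross-diffusion term sits in the leading differential operator and couples the two equations already at highest order, so the linearized operator inherits precisely the weakened smoothing the paper flags as its central technical hurdle; controlling the convolution tails and producing an invertible approximate derivative $A$ for this coupled, non-self-adjoint problem is genuinely delicate. Second — and this is presumably why only $11$ of the $13$ solutions can be certified — the method concludes $\mathrm{Re}(\lambda)>0$ only when the validation ball of radius $r$ lies strictly in the open right half-plane. For steady states whose leading eigenvalue has very small positive real part (typically near bifurcations, or where a pair of eigenvalues sits close to the imaginary axis), the enclosure straddles the axis and the sign of $\mathrm{Re}(\lambda)$ cannot be decided, corresponding exactly to the green triangles in Figure~\ref{fig:stability_diagram}.
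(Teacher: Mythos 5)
Your overall skeleton matches the paper's: linearize around the validated steady state, append a normalization condition to isolate the eigenpair, expand in cosine series, set up a Newton-like fixed-point operator $T=I-AF$, and conclude instability from $P(r)<0$ together with $\Re(\bar\lambda)>r$. However, you correctly identify the central obstacle --- the cross-diffusion term sits in the leading differential operator, so a naive Fourier expansion of~\eqref{eq:expanded_eigen} produces dominant terms of order $k^2$ that are \emph{convolution} operators (multiplication by $d_1+d_{12}\bar v$ and $d_{12}\bar u$), not diagonal ones --- and then leave it unresolved, calling it ``genuinely delicate.'' This is precisely where your proposal would fail: without a diagonal dominant part you cannot write down an approximate inverse $A$ whose tail is explicit and whose operator norm is computable, and the $Z_1$ tail bounds collapse. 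The paper's resolution is to rewrite the eigenproblem as $M_1(\xi'',\eta'')^T+M_2(\xi',\eta')^T+M_3(\xi,\eta)^T=\lambda(\xi,\eta)^T$ and left-multiply by $M_1^{-1}$, which exists explicitly and in upper-triangular form precisely because $d_{21}=0$ and $p=1/(d_1+d_{12}v)>0$; this yields the preconditioned system~\eqref{eq:eigen_with_c} whose second-order part is the pure Laplacian, restoring the diagonal tail structure for $A^{\dag}$ and $A$. A second ingredient you omit entirely: the coefficients $c_1,\dots,c_9$ of the linearization depend on the \emph{exact} steady state, which is only known up to the validation radius $r_\nu$ in $\ell^1_\nu$; since some $c_j$ involve $u''$ and $v''$, their errors can only be controlled in $\ell^1_\gamma$ for $\gamma<\nu$ (Lemma~\ref{lem:estimate_derivative}), and the map must be split as $F=\bar F+\mathcal{E}_F$ with the error part propagated through all of $Y$, $Z_1$, $Z_2$. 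Without this, the ``defect'' $Y$ of your numerical eigenpair is not actually computable.

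A smaller point: your diagnosis of why only $11$ of the $13$ solutions are certified is not what happens in the paper. The two uncertified solutions at $d=0.005$ (labelled (a) and (l) in Table~\ref{table:data_instability}) are the ones for which \emph{no unstable eigenvalue was found numerically at all} --- they appear to be linearly stable (the red circles), not cases where an enclosure of a genuinely unstable eigenvalue straddles the imaginary axis. The scenario you describe corresponds to the green triangles elsewhere in Figure~\ref{fig:stability_diagram}, not to the two exceptions in the statement of the theorem.
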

The steady states marked in red in Figure~\ref{fig:stability_diagram} and in particular the two solutions out of 13 that are not concerned by the above Theorem seem to be stable. However, at the moment  we are not yet able to use our validation method to prove linear stability, as this requires to control the whole spectrum and not just a single eigenvalue. Still, we point out that the straight line of solutions at $v(0)=0.125$ corresponds to the homogeneous steady state~\eqref{eq:homogeneous_eq}, for which the linear stability could of course be studied analytically. In particular it could be proven that, before the bifurcation occuring at $d\simeq 0.0328$ the homogeneous steady state is linearly stable. Validated numerics techniques were used successfully to prove stability in other situations (\cite{CaiZen14,KinWatNak14}, see also~\cite{Mir16}), but the adaptation to our problem presents several challenges (mainly due to the cross-diffusion terms, which {\it muddle} the asymptotic structure of the eigenvalue problem, see Section~\ref{sec:framework_instability}) and will be the object of future investigations. 

\medskip

The paper is organized as follows. In Section~\ref{sec:validated_numerics}, we give a brief  exposition of the validated numerics techniques we apply in this work, as well as additional references on the subject.  In particular we state the Theorem \ref{th:radii_pol}, that serves as common reference and guideline for both the rigorous computation of the steady states and the rigorous enclosure of the eigenvalues. Section~\ref{sec:def} is devoted to the introduction of some notations and elementary estimates that are used throughout the paper. In Sections~\ref{sec:framework_steady_states} to~\ref{sec:results_steady_states}, we then prove the existence of steady states. More precisely, in Section~\ref{sec:fun_F} we expose how to reformulate the problem of existence of solutions of~\eqref{eq:steady_states} into a framework suitable for Theorem~\ref{th:radii_pol}. In Section~\ref{sec:bounds_steady_states}, we then derive explicit  and implementable formulas for the bounds involved in Theorem~\ref{th:radii_pol}, and finally give examples of results in Section~\ref{sec:results_steady_states}. Sections~\ref{sec:framework_instability} to~\ref{sec:results_instability} are dedicated to proving the instability of some of these steady states, following the same procedure: suitable reformulation in Section~\ref{sec:F2}, bounds in Section~\ref{sec:bounds_instability} and results in Section~\ref{sec:results_instability}.

\section{Overview of the rigorous computational method}
\label{sec:validated_numerics}

In this section we briefly explain the strategy for both solving \eqref{eq:steady_states} and computing the linear stability of the steady states by means of validated numerics techniques.
Each problem is formulated as solving an equation $F(X)=0$ defined on a suitable Banach space. 
The core of the method, first presented in~\cite{Yam98}, consists in the introduction of an operator $T$ whose fixed points are in one-to-one correspondence with the   zeros of $F(X)$. The existence and enclosure of the  solution follow by the Banach fixed point theorem once the operator $T$ is proven to be a contraction on some complete set. The explicit determination of the neighborhood on which the operator is a contraction is done efficiently using the \emph{radii polynomial approach} (see~\cite{DayLesMis07}), which is reminiscent of the Newton-Kantorovich Theorem. The technique can be summarized in the following statement.
\begin{theorem}
\label{th:radii_pol}
Let $(\X,\left\Vert\cdot\right\Vert_\X)$, $(\mathcal Y,\left\Vert\cdot\right\Vert_\mathcal Y)$ be  Banach spaces and $F:\X\to \mathcal Y$ a $\CC^1$ function. Let $A:\mathcal Y\to \X$ and $A^{\dag}:\X\to \mathcal Y $ be linear operators, so that $AF$ maps $\X$ into itself. Let $\bar X\in\X$ and assume there exist positive constants $Y$, $Z_0$, $Z_1$ and a positive function $r\mapsto Z_2(r)$ such that
\begin{align}
\left\Vert AF(\bar X) \right\Vert_\X &\le Y \label{def:Y}\\
\left\VERT I-AA^{\dag} \right\VERT_\X &\le Z_0 \label{def:Z_0}\\
\left\VERT A\left(DF(\bar X)-A^{\dag}\right) \right\VERT_\X &\le Z_1 \label{def:Z_1}\\
\left\VERT A\left(DF(X)-DF(\bar X)\right) \right\VERT_\X &\le rZ_2(r) \quad \forall~X\in\B_\X(\bar X,r) \label{def:Z_2},
\end{align}
where $\B_\X(\bar X,r)$ is the closed ball of $\X$, centered at $\bar X$ and of radius $r$, and $\VERT\cdot\VERT_\X$ denotes the operator norm on $\X$. Define the function $P$ as
\begin{equation}
\label{def:radii_pol}
P(r)=Z_2(r) r^2 -\left(1-(Z_0+Z_1)\right)r + Y.
\end{equation}
If there exists $r>0$ such that $P(r)<0$, then the operator $T:\X\to\X$ defined as
\begin{equation}
\label{def:T}
T=I-AF
\end{equation}
has a unique fixed point in $\B_\X(\bar X,r)$. Moreover if $A$ is injective, then $F$ has a unique zero in $\B_\X(\bar X,r)$.
\end{theorem}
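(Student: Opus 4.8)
The plan is to show that the Newton-like operator $T = I - AF$ is a contraction of the closed ball $\B_\X(\bar X, r)$ into itself, and then to invoke the Banach fixed point theorem; since $\B_\X(\bar X, r)$ is a closed subset of the Banach space $\X$, it is a complete metric space and the theorem applies. Everything will be driven by the single scalar inequality $P(r) < 0$, from which I will extract both a contraction constant strictly below $1$ and the self-mapping estimate. The $\CC^1$ regularity of $F$ will enter through a mean value (fundamental theorem of calculus) argument.

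The key algebraic observation is the telescoping identity, valid for every $X \in \X$,
\begin{equation*}
I - A\,DF(X) = \left(I - AA^\dagger\right) + A\left(A^\dagger - DF(\bar X)\right) + A\left(DF(\bar X) - DF(X)\right),
\end{equation*}
whose three summands match exactly the hypotheses \eqref{def:Z_0}, \eqref{def:Z_1} and \eqref{def:Z_2}. For $X \in \B_\X(\bar X, r)$, I would take operator norms and use the triangle inequality, together with the fact that the operator norm is unchanged under a global sign, to obtain
\begin{equation*}
\left\VERT I - A\,DF(X) \right\VERT_\X \le Z_0 + Z_1 + r Z_2(r) =: \kappa .
\end{equation*}

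Next I would control the increments of $T$ on the ball. Since $F$ is $\CC^1$, integrating its derivative along the segment from $X'$ to $X$ gives, for $X, X' \in \B_\X(\bar X, r)$,
\begin{equation*}
T(X) - T(X') = \int_0^1 \left(I - A\,DF\big(X' + t(X - X')\big)\right)(X - X')\,dt ,
\end{equation*}
and since the whole segment lies in the convex ball, the previous bound yields $\|T(X) - T(X')\|_\X \le \kappa\,\|X - X'\|_\X$. Dividing $P(r) < 0$ by $r > 0$ and using $Y \ge 0$ shows $\kappa < 1 - Y/r \le 1$, so $T$ is a contraction. For the self-map property I note $\|T(\bar X) - \bar X\|_\X = \|AF(\bar X)\|_\X \le Y$ by \eqref{def:Y}, whence for every $X \in \B_\X(\bar X, r)$,
\begin{equation*}
\|T(X) - \bar X\|_\X \le \|T(X) - T(\bar X)\|_\X + \|T(\bar X) - \bar X\|_\X \le \kappa r + Y < r ,
\end{equation*}
the last inequality being exactly $P(r) < 0$ rewritten as $Y < (1 - \kappa) r$. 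Thus $T$ maps $\B_\X(\bar X, r)$ into itself and the Banach fixed point theorem provides a unique fixed point $\tilde X$ in the ball.

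Finally, for the statement on zeros of $F$: a fixed point satisfies $AF(\tilde X) = 0$, and if $A$ is injective this forces $F(\tilde X) = 0$; conversely any zero of $F$ in the ball is automatically a fixed point of $T$, hence coincides with $\tilde X$. I expect the only genuinely delicate point to be the justification of the integral mean value formula in the Banach-space setting: it is precisely where the $\CC^1$ assumption is used, guaranteeing that $t \mapsto DF(X' + t(X-X'))$ is continuous so that the (Bochner) integral and the operator-norm bound pulled inside it are legitimate. Everything else is bookkeeping organized around the telescoping identity and the rearrangement of $P(r) < 0$.
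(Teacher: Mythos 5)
Your proof is correct, and it is exactly the standard argument: the paper itself omits the proof of Theorem~\ref{th:radii_pol} and refers to~\cite{DayLesMis07}, where the same contraction-mapping scheme (telescoping $I-A\,DF(X)$ against the three $Z$-bounds, the mean value inequality on the convex ball, and the rearrangement of $P(r)<0$ into $\kappa r+Y<r$) is used. The concluding step on zeros of $F$ via injectivity of $A$ is also handled correctly in both directions.
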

We omit the proof of the theorem, that can be found for instance  in \cite{DayLesMis07}. Nevertheless, in the next remark we explain the role of the  different operators involved in the theorem and some instructions on how to define them. These considerations are detailed and made more explicit in Section~\ref{sec:bounds_steady_states} (resp. Section~\ref{sec:bounds_instability}), where we derive the bounds $Y$, $Z_0$, $Z_1$ and $Z_2(r)$ for an $F$ associated to the existence of solutions of~\eqref{eq:steady_states} (resp. their instability). We also mention that in practice, because of the way we define $A$, its injectivity is in fact implied by the existence of a $r>0$ such that $P(r)<0$ (see Proposition~\ref{prop:radii_pol}).

\begin{remark}\label{rmk:thm}
\begin{itemize}
\item $\bar X$ is chosen as an approximate  solution for $F(X)=0$, to be computed numerically as zero for a  finite dimensional approximation of $F$.  The constant $Y$ is the {\it defect bound} and measures how far is $\bar X$ from being a fixed point of $T$.  Depending on the accuracy of the approximate solution $\bar X$, we expect $Y$ to be small.
 
\item The $Z_i(r)$, $i=0,1,2$, are meant as bounds for the rate of contraction of the operator $T$ in the ball  $\B_\X(\bar X,r)$. More precisely, $Z_0+Z_1$ provides a bound for the derivative of $T$ at $\bar X$, while $Z_2(r)$ gives a correction for the derivative in the whole ball  $\B_\X(\bar X,r)$.  Assume for a moment  that $Z_2$ is constant. Necessary and sufficient conditions for the existence of an $r>0$ such that $P(r)<0$ are given by
\begin{equation*}
Z_0+Z_1<1 \quad \text{and}\quad \left(1-(Z_0+Z_1)\right)^2> 4Z_2Y.
\end{equation*}
The two conditions imply that $T$ is a contraction on the ball $\B_\X(\bar X,r)$. In order to obtain a small $Z_1$, the operator  $A^{\dag}$ is conceived as an  approximation of $DF(\bar X)$ (again based on a finite dimensional approximation of $F$). Similarly, the operator  $A$ is constructed as an  approximate inverse of $A^{\dag}$, which will then make $Z_0$ small.
 A key point is to define $A^{\dag}$ and $A$ in a smart way, to have good enough approximations while being able to derive tight bounds for $Z_1$. Typically $\mathcal Y$ is a space of functions less regular than $\X$. To this extent, the operator $A$ acts as a smoothing operator.
\item If the nonlinearity of $F$ is a  polynomial  (say of degree $d$), $Z_2$ can be constructed as a polynomial (of degree $d-2$), and therefore $P(r)$ is indeed a polynomial (of same degree $d$ as $F$). 
\item All the  bounds are obtained through a combination of analytic estimates (because the spaces involved  are naturally infinite dimensional) and numerical computations (since they depend on the approximate solution $\bar X$). To ensure that all possible round off errors are controlled during the computations, we use an interval arithmetic package (in our case INTLAB~\cite{Rum99}).
\end{itemize} 
\end{remark}
As said in the Introduction,  this work is far from being the first application of this kind of rigorous computational  techniques to solve systems of PDEs, see for instance \cite{ MR3353132, Cas16, CasTei16,DayLesMis07, MR2776917, GamLes11}. Particularly related to our work is the result presented in~\cite{BreLesVan13}. In that paper a similar method was used to rigorously validate a bifurcation diagram of steady states of a 3-component reaction diffusion system (without cross-diffusion term). The system considered in~\cite{BreLesVan13} depends on a parameter $\varepsilon$  and it has the property that  its steady states approach the solutions of~\eqref{eq:steady_states} as $\varepsilon$ goes to $0$, see ~\cite{IzuMim08}. However, the proof could only be made for a fixed (small) $\varepsilon$ and  the limit case $\varepsilon= 0$ is in some sense singular. Therefore the cross-diffusion case could not be handled.

More broadly, techniques similar to the one presented in this work were developed to prove the existence of fixed points, periodic orbits, invariant manifolds and connecting orbits for ordinary differential equations, infinite dimensional maps, partial differential equations and delay differential equations (see for instance~\cite{BerLes15,BerMirRei15,BerShe15,CasLes13,CasLesMir16,dAmb15,GamLes16,Les10,LlaMir16}).
We also mention the existence of comparable techniques where, instead of computing $A$ by using a finite dimensional truncation as we do, a bound on the norm of the inverse of $DF(\bar X)$ is obtained via spectral estimations (see~\cite{Plu01, McK09}    and the references therein). Instead of the contraction mapping principle, computer assisted proofs in dynamical systems are frequently based on topological tools  as covering relations,  the Brouwer degree, the fixed point index, the Conley index, see for instance \cite{Cap15, Gid04, Mis05, Mro96, Zgl01}. To conclude this paragraph, we refer the interested reader to \cite{Ari15, Bah11, Gal16, Yam01}  for a list, surely not exhaustive, of rigorous computational techniques developed to solve a variety of problems, not necessarily in the area of   dynamical systems.

Theorem \ref{th:radii_pol} is the cornerstone for all the proofs that are presented in this paper.  Whatever problem we want to solve, once the system is rephrased as a zero finding problem and the hypothesis of the theorem are verified, then the proof follows as application of the theorem. Thus, for a given problem $(\CP)$, we proceed as follows:
\begin{enumerate}
\item Introduce a Banach space $(\X,\left\Vert\cdot\right\Vert_\X)$ and a  $\CC^1$ function $F$ defined on  $\X$ so that the solutions of $F(X)=0$ correspond to solutions of $(\CP)$;
\item Compute a numerical approximation $\bar X\in \X$ so that  $F(\bar X)\approx 0$ and define the linear operators $A$ and $A^\dag$;
\item Define and compute the bounds $Y,Z_i(r)$ satisfying \eqref{def:Y}-\eqref{def:Z_2};
\item Check that $P(r)$ given in \eqref{def:radii_pol} is negative for some $r>0$.
\end{enumerate}
If the last condition is met, the existence of a solution $X$ for $F(X)=0$ is proved in the form specified in the theorem.

In the sequel we detail  each step of the above list for the problem of proving existence of steady states (Section \ref{sec:framework_steady_states}) and for the problem of proving their linear instability (Section \ref{sec:framework_instability}).

\section{Sequence space, convolutions and norm estimates}
\label{sec:def}

The solutions of system~\eqref{eq:steady_states} as well as the eigenfunctions of the linearised system are sought in the form of Fourier series,  which is fairly natural given the boundary conditions included in~\eqref{eq:steady_states}. This approach also provides a very convenient setting to apply our validated numerics technique. In this section  we introduce  the sequences space  relevant for our analysis and  we recall some useful properties. The material presented here is standard and mainly included for the sake of completeness and to fix some notations.

\begin{definition}\label{def:ell1}
Let $\nu>1$. For any sequence $u=\left(u_k\right)_{k\geq 0} \in\C^{\N}$ we define the $\nu$-norm of $u$ as 
\begin{equation*}
\left\Vert u\right\Vert_{\nu} = \vert u_0\vert + 2\sum_{k\geq 1}\left\vert u_k\right\vert \nu^{|k|},
\end{equation*}
and introduce the space 
\begin{equation*}
\ell^1_{\nu} = \left\{ u\in\C^{\N},\ \left\Vert u\right\Vert_{\nu} < \infty \right\}.
\end{equation*}
We also define $\ell^1_\nu(\R)$ the subspace of $\ell^1_\nu$ made of real sequences.
\end{definition}
\begin{definition}
For any $u,v\in\ell^1_{\nu}$, we define the sequences $(u\ast v)$, $(u\star v)$ and $(u\bullet v)$ as
\begin{equation*}
(u\ast v)_k= \sum_{\substack{k_1,k_2\in \Z \\ k_1+k_2=k}} u_{\vert k_1\vert}v_{\vert k_2\vert},\quad (u\star v)_k= \sum_{\substack{k_1,k_2\in \Z \\ k_1+k_2=k}} \sgn(k_1)u_{\vert k_1\vert}v_{\vert k_2\vert},
\end{equation*}
\begin{equation*}
(u\bullet v)_k= \sum_{\substack{k_1,k_2\in \Z \\ k_1+k_2=k}} \sgn(k_1)\sgn(k_2)u_{\vert k_1\vert}v_{\vert k_2\vert},
\end{equation*}
where $\sgn(k)$ denotes the sign of $k$ and  $\sgn(0)=0$.
\end{definition}
The reason for introducing three different convolution products is that we will deal with multiplications of  both even and odd functions. The role played by each of the above operations is as follows.

Let $u$ and $v$ in $\ell^1_{\nu}$ and consider the even functions (still denoted $u$ and $v$) defined by
\begin{equation*}
u(x)=u_0+2\sum_{k\geq 1}u_k\cos(kx), \quad v(x)=v_0+2\sum_{k\geq 1}v_k\cos(kx),
\end{equation*}
then $(u\ast v)$ is the sequence of Fourier coefficients of the product function $uv$, i.e.
\begin{equation*}
u(x)v(x)=(u\ast v)_0+2\sum_{k\geq 1}(u\ast v)_k\cos(kx).
\end{equation*}
If instead,  $u$ is the odd  function given by 
\begin{equation*}
u(x)=2\sum_{k\geq 1}u_k\sin(kx),
\end{equation*}
then  $(u\star v)$  provides the sequence of Fourier coefficients of the product function $uv$, i.e.
\begin{equation*}
u(x)v(x)=2\sum_{k\geq 1}(u\star v)_k\sin(kx).
\end{equation*}
Finally, if both  the functions  $u$ and $v$ are odd
\begin{equation*}
u(x)=2\sum_{k\geq 1}u_k\sin(kx),\quad v(x)=2\sum_{k\geq 1}v_k\sin(kx),
\end{equation*}
then $(u\bullet v)$ is the sequence of Fourier coefficients of the product function $uv$, i.e.
\begin{equation*}
u(x)v(x)=(u\bullet v)_0+2\sum_{k\geq 1}(u\bullet v)_k\cos(kx).
\end{equation*}
We also recall that $\ell^1_\nu$ equipped with any of the three convolution products $\ast$, $\star$ or $\bullet$ is a Banach algebra. More precisely we have the following estimate.
\begin{lemma}
\label{lem:convolution_algebra}
Let $u,v\in\ell^1_\nu$ and  $\circ\in\{\ast,\star, \bullet\}$ any of the convolution products. Then
\begin{equation*}
\left\Vert u\circ v\right\Vert_{\nu}\leq \left\Vert u\right\Vert_{\nu} \left\Vert v\right\Vert_{\nu}.
\end{equation*}
\end{lemma}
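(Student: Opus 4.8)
The goal is to prove the Banach algebra estimate $\left\Vert u\circ v\right\Vert_\nu \leq \left\Vert u\right\Vert_\nu \left\Vert v\right\Vert_\nu$ for each of the three convolution products $\circ\in\{\ast,\star,\bullet\}$. The plan is to treat all three cases uniformly, since each differs from the others only by $\pm 1$ sign factors $\sgn(k_1)$ and/or $\sgn(k_2)$ that are harmless under the triangle inequality. First I would unfold the definition of the $\nu$-norm on the output sequence $w := u\circ v$, writing $\left\Vert w\right\Vert_\nu = \vert w_0\vert + 2\sum_{k\geq 1}\vert w_k\vert\,\nu^{|k|}$, and insert the convolution sum into each coefficient $w_k$. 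Since $|\sgn(\cdot)|\leq 1$, applying the triangle inequality to $w_k = \sum_{k_1+k_2=k}(\pm)u_{|k_1|}v_{|k_2|}$ gives $\vert w_k\vert \leq \sum_{k_1+k_2=k}|u_{|k_1|}|\,|v_{|k_2|}|$ in every case, so from this point onward the three products obey a common bound and I only need to estimate the majorant coming from the plain sum over $k_1+k_2=k$ with $k_1,k_2\in\Z$.

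The key technical step is to recognize that the $\nu$-norm is precisely designed so that it becomes multiplicative under this $\Z$-indexed convolution. The cleanest way to see this is to pass from the ``folded'' one-sided sequences $(u_k)_{k\geq 0}$ to their symmetric two-sided extensions $\tilde u_k := u_{|k|}$ for $k\in\Z$, and likewise for $v$. Under this extension the weight $\nu^{|k|}$ and the factor of $2$ for $k\neq 0$ reorganize so that $\left\Vert u\right\Vert_\nu = \sum_{k\in\Z}|\tilde u_k|\,\nu^{|k|}$, the standard weighted $\ell^1(\Z)$ norm with submultiplicative weight $\nu^{|k|}$ (submultiplicative because $\nu^{|k_1+k_2|}\leq \nu^{|k_1|}\nu^{|k_2|}$, as $|k_1+k_2|\leq|k_1|+|k_2|$ and $\nu>1$). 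Then the bound reduces to the classical fact that weighted $\ell^1$ with a submultiplicative weight is a Banach algebra under the full $\Z$-convolution.

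Concretely, I would bound
\begin{align*}
\left\Vert u\circ v\right\Vert_\nu
&\leq \sum_{k\in\Z}\,\Bigl(\sum_{k_1+k_2=k} |u_{|k_1|}|\,|v_{|k_2|}|\Bigr)\nu^{|k|}
\leq \sum_{k_1\in\Z}\sum_{k_2\in\Z} |u_{|k_1|}|\,|v_{|k_2|}|\,\nu^{|k_1|}\nu^{|k_2|},
\end{align*}
where the first inequality collects the triangle-inequality estimate above into the two-sided normalization and the second uses $\nu^{|k_1+k_2|}\leq\nu^{|k_1|}\nu^{|k_2|}$ together with Fubini/Tonelli for nonnegative terms (legitimate because $u,v\in\ell^1_\nu$ makes every sum absolutely convergent). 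The double sum factors as $\bigl(\sum_{k_1\in\Z}|u_{|k_1|}|\nu^{|k_1|}\bigr)\bigl(\sum_{k_2\in\Z}|v_{|k_2|}|\nu^{|k_2|}\bigr) = \left\Vert u\right\Vert_\nu\left\Vert v\right\Vert_\nu$, which is the desired estimate.

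The only point requiring genuine care---and the step I expect to be the main obstacle---is the bookkeeping of the factor $2$ and of the $k=0$ term when translating between the one-sided norm of Definition~\ref{def:ell1} and the two-sided weighted $\ell^1(\Z)$ norm. One must check that the symmetrization $\tilde u_k = u_{|k|}$ exactly reproduces $\left\Vert u\right\Vert_\nu = \sum_{k\in\Z}|\tilde u_k|\nu^{|k|}$ (the $k=0$ term contributes $|u_0|$ once and each $k\neq 0$ contributes $|u_k|\nu^{|k|}$ twice, matching the $2\sum_{k\geq1}$), and that the convolution indexing over $k_1,k_2\in\Z$ in the definitions is consistent with this extension. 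This is purely a matter of careful index accounting rather than a deep difficulty, and it is identical in structure to the standard proof that $\ell^1$ Fourier coefficient spaces are Banach algebras; the three products $\ast,\star,\bullet$ introduce no further complication since their sign factors only decrease absolute values.
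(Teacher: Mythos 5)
Your proof is correct and complete. The paper itself gives no proof of this lemma: it is stated in Section~3 as standard recalled material, so there is nothing to compare against, but your argument is exactly the standard one a reader would supply. The one genuinely checkable point --- that the symmetrization $\tilde u_k = u_{|k|}$ turns the one-sided norm with its factor of $2$ into the two-sided weighted $\ell^1(\Z)$ norm, and that the majorant $\sum_{k_1+k_2=k}|u_{|k_1|}|\,|v_{|k_2|}|$ is invariant under $k\mapsto -k$ so that the same bookkeeping applies to the output sequence --- is precisely the one you identify and handle, and the reduction of all three products $\ast,\star,\bullet$ to this single majorant via $|\sgn(\cdot)|\le 1$ is exactly right.
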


Consider now $B:\ell^1_\nu\to \ell^1_\nu $ a bounded linear operator. To the operator $B$ is associated a infinite dimensional matrix (still denoted by $B$) so that $(Bu)_k=\sum_{j\geq 0}B(k,j)u_j$ for all $u$ in $\ell^1_\nu$. We denote by $\left\VERT B\right\VERT_\nu$ the operator norm of $B$, i.e.
\begin{equation}\label{eq:oper_norm}
\left\VERT B\right\VERT_\nu=\sup_{\left\Vert u\right\Vert_\nu=1} \left\Vert Bu \right\Vert_\nu .
\end{equation}

\begin{lemma}
\label{lem:operator_norm_nu}
Let $B:\ell^1_\nu\to \ell^1_\nu $ be a linear operator and consider $B(k,j)$ the matrix representation. Then
\begin{equation*}
\left\VERT B\right\VERT_\nu=\sup_{j\geq 0}\frac{1}{\nu^{j}}\sum_{k\geq 0}|B(k,j)|\nu^{k}. 
\end{equation*}
\end{lemma}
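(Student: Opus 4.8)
The plan is to recognise this as the formula for the operator norm on the weighted space $\ell^1_\nu$ and to prove it by establishing the two inequalities $\VERT B\VERT_\nu \le C$ and $\VERT B\VERT_\nu \ge C$ separately, where I write $C := \sup_{j\ge 0}\nu^{-j}\sum_{k\ge 0}\vert B(k,j)\vert\,\nu^{k}$ for the supremum of weighted column sums on the right-hand side. The whole argument is a rearrangement of nonnegative series, so the only analytic input is Tonelli's theorem.

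For the bound $\VERT B\VERT_\nu\le C$, I would take an arbitrary $u\in\ell^1_\nu$ and start from the matrix representation $(Bu)_k=\sum_{j\ge0}B(k,j)u_j$. Applying the triangle inequality inside each coefficient bounds $\left\Vert Bu\right\Vert_\nu$ by a double series in $k$ and $j$ whose nonnegative terms $\vert B(k,j)\vert\,\vert u_j\vert$ are weighted by the $\nu$-weights of the modes. Tonelli's theorem then licenses interchanging the two summations; collecting the factors that multiply each $\vert u_j\vert$ produces, up to the weight carried by the $j$-th mode, exactly the $j$-th weighted column sum, which is at most $C$ by definition. Summing over $j$ yields $\left\Vert Bu\right\Vert_\nu\le C\left\Vert u\right\Vert_\nu$, and hence $\VERT B\VERT_\nu\le C$.

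For the reverse inequality $\VERT B\VERT_\nu\ge C$, I would test $B$ on the single-mode sequences: for each fixed $j$ the sequence $e^{(j)}$ with a $1$ in position $j$ and zeros elsewhere lies in $\ell^1_\nu$, and $Be^{(j)}$ is precisely the $j$-th column of the matrix, so the quotient $\left\Vert Be^{(j)}\right\Vert_\nu/\left\Vert e^{(j)}\right\Vert_\nu$ equals the $j$-th term $\nu^{-j}\sum_{k}\vert B(k,j)\vert\nu^{k}$ in the supremum defining $C$. Since the operator norm dominates every such quotient, taking the supremum over $j\ge0$ gives $\VERT B\VERT_\nu\ge C$; combined with the previous step this proves the identity.

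The one place that needs genuine care is the weighted bookkeeping in the upper bound, namely tracking the weight attached to the zeroth mode against that of the higher modes so that the normalisation by the $j$-th weight cancels correctly and the inner sum collapses to exactly the column-sum expression in the statement. Besides this, one only has to justify the interchange of the two summations, which is immediate from the nonnegativity of the terms (with the understanding that $C=+\infty$ corresponds precisely to $B$ being unbounded, in which case there is nothing to prove).
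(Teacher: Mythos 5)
The paper offers no proof of this lemma (it is presented as standard background in Section~3), so there is nothing to compare against on that front; your two-inequality scheme --- Tonelli for $\VERT B\VERT_\nu\le C$, testing on the coordinate sequences $e^{(j)}$ for $\VERT B\VERT_\nu\ge C$ --- is indeed the standard route. The structural skeleton is sound. The problem is that the single step you flag as ``needing genuine care'' and then defer is exactly the step that does not work as you assert. With the norm of Definition~\ref{def:ell1}, the weight attached to mode $k$ is $w_0=1$ and $w_k=2\nu^k$ for $k\ge 1$, and both halves of your argument, carried out exactly, yield
\begin{equation*}
\VERT B\VERT_\nu=\sup_{j\geq 0}\frac{1}{w_{j}}\Big(|B(0,j)|+2\sum_{k\geq 1}|B(k,j)|\nu^{k}\Big),
\end{equation*}
not the displayed right-hand side. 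In particular the quotient $\left\Vert Be^{(j)}\right\Vert_\nu/\left\Vert e^{(j)}\right\Vert_\nu$ does \emph{not} equal $\nu^{-j}\sum_{k}|B(k,j)|\nu^{k}$: the numerator carries a factor $2$ on every $k\ge1$ term and none on the $k=0$ term, and the denominator is $2\nu^{j}$ rather than $\nu^{j}$ when $j\ge1$. The two discrepancies cancel on the block $j,k\ge1$, but not in the zeroth row or column.

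Concretely, the operator with the single nonzero entry $B(1,0)=1$ has $\VERT B\VERT_\nu=2\nu$ (test on $e^{(0)}$, which has norm $1$, and note $\Vert Be^{(0)}\Vert_\nu=2\nu$), whereas the displayed formula returns $\nu$; the operator with single entry $B(0,1)=1$ has $\VERT B\VERT_\nu=1/(2\nu)$ whereas the formula returns $1/\nu$. So the identity you set out to prove is false as literally written --- the formula is neither an upper nor a lower bound in general --- and no amount of ``bookkeeping'' will make the inner sum collapse to the stated expression. The fix is to carry the weights $(1,2\nu,2\nu^2,\dots)$ through both inequalities and prove the weighted column-sum formula above, which is what the paper evidently intends and what a correct implementation must use. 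Your proof proposal should either do that explicitly or state clearly that the lemma's displayed formula is only correct modulo these factor-of-$2$ corrections in the zeroth row and column.
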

A linear functional $b:\ell^1_\nu \to \R$ is a particular case of the general operator $B$ given above, when $(B(v))_k =0 $ for any $k\neq 0$. The linear functional $b$ acts on $u$ as  $bu=\sum_{j\geq 0}b_j u_j$ and the operator norm of $b$ is then given by , $\left\VERT b\right\VERT_\nu=\sup_{j\geq 0}\frac{|b(j)|}{\nu^{|j|}}$. We point out that this last formula is linked to the fact that the dual space of $\ell^1$ with weight $\nu$ is isometric to the space $\ell^{\infty}$ with weight $\nu^{-1}$.

When dealing with numerical computations, we need to consider only a finite number of coefficients $u_k$ in the infinite sequence $u\in \ell^1_\nu$, that is we consider a finite dimensional projection of $u$. 
\begin{definition}
\label{def:truncation}
Let $u\in\ell^1_{\nu}$. For $m\in\N$  we denote $\hat u^m$ the \emph{truncated part} (i.e. the finite $m$-dimensional projection) of $u$ and $\check u^m$ the \emph{tail part} (i.e. infinite dimensional complement) of $u$, given as
\begin{equation*}
\hat u^m_k = \left\{
\begin{aligned}
& u_k, &k<m,\\
& 0, &k\geq m,
\end{aligned}
\right.
\quad \text{and}\quad 
\check u^m_k = \left\{
\begin{aligned}
& 0, &k<m, \\
& u_k, &k\geq m.
\end{aligned}
\right.
\end{equation*}
\end{definition}
By a slight abuse of notation, we also refer to $\hat u^m$ as the finite dimensional vector $\left(u_k\right)_{0\le k<m}$.  Moreover, when there is no possible confusion about the dimension of the projection we may drop the exponent $m$ and simply use $\hat u$ and $\check u$. 

We end this section with an estimate used to bound a convolution with a \emph{tail term}.
\begin{lemma}
\label{lem:convolution_truncation}
Let $m\in\N$ and $u,v\in\ell^1_{\nu}$ and $\circ\in\{\ast,\star, \bullet\}$ any of the convolution products. Then, for all $k\geq 0$,
\begin{equation*}
\vert u\circ \check v^m \vert_k \leq \Phi^m_k(u,\nu) \left\Vert v\right\Vert_{\nu},
\end{equation*}
where
\begin{equation*}
\Phi^m_k(u,\nu) = \sup\limits_{\vert l\vert \geq m } \frac{\left\vert u_{\vert l-k\vert}\right\vert}{\nu^{\vert l\vert}}.
\end{equation*}
\end{lemma}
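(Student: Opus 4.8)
The plan is to unwind the definition of the convolution, exploit the fact that the tail term $\check v^m$ kills all contributions with small index, and then split each summand into one factor bounded by the supremum defining $\Phi^m_k$ and one factor that reassembles into $\left\Vert v\right\Vert_{\nu}$. A single computation will cover all three products $\ast,\star,\bullet$ simultaneously, since the sign weights appearing in $\star$ and $\bullet$ have modulus at most one.

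First I would write $(u\circ\check v^m)_k$ explicitly from the definition as a sum over $k_1+k_2=k$ with $k_1,k_2\in\Z$. Because $(\check v^m)_j=0$ for $j<m$, only the terms with $\vert k_2\vert\geq m$ survive. Applying the triangle inequality and bounding each $\sgn(\cdot)$ factor by $1$, and renaming $k_2=l$ (so $k_1=k-l$), I obtain
\begin{equation*}
\vert u\circ\check v^m\vert_k \leq \sum_{\substack{l\in\Z \\ \vert l\vert\geq m}} \vert u_{\vert k-l\vert}\vert\,\vert v_{\vert l\vert}\vert .
\end{equation*}
Next I would insert the weight $\nu^{\vert l\vert}$ artificially, rewriting each summand as $\frac{\vert u_{\vert k-l\vert}\vert}{\nu^{\vert l\vert}}\cdot\vert v_{\vert l\vert}\vert\,\nu^{\vert l\vert}$, and pull out the first factor by its supremum over $\vert l\vert\geq m$. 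Since $\vert k-l\vert=\vert l-k\vert$, this supremum is exactly $\Phi^m_k(u,\nu)$, leaving the residual sum $\sum_{\vert l\vert\geq m}\vert v_{\vert l\vert}\vert\,\nu^{\vert l\vert}$.

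The final step is to identify this residual sum with a tail of the $\nu$-norm: for $m\geq 1$ the two-sided sum equals $2\sum_{j\geq m}\vert v_j\vert\,\nu^{j}\leq\left\Vert v\right\Vert_{\nu}$, while for the borderline case $m=0$ it equals $\vert v_0\vert+2\sum_{j\geq 1}\vert v_j\vert\,\nu^{j}=\left\Vert v\right\Vert_{\nu}$; in either case it is bounded by $\left\Vert v\right\Vert_{\nu}$, which yields the claim. There is no genuine obstacle here; the only points demanding care are the bookkeeping of the two-sided $\Z$-indexed sum (in particular the factor $2$ built into the definition of the $\nu$-norm and the $m=0$ endpoint) and the remark that absorbing the sign factors costs nothing, so that the even/even, odd/even and odd/odd cases need not be treated separately.
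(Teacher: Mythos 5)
Your proof is correct, and in fact the paper gives no proof of this lemma at all (it is stated as standard material in Section~3), so your argument fills that gap with what is clearly the intended computation: restrict the convolution sum to $\vert k_2\vert\geq m$ using $(\check v^m)_j=0$ for $j<m$, absorb the sign weights, insert the weight $\nu^{\vert l\vert}$, and extract the supremum. Your bookkeeping of the two-sided sum, the factor $2$ in the $\nu$-norm, and the $m=0$ endpoint is all accurate, so nothing further is needed.
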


\section{Framework for the existence of steady states}
\label{sec:framework_steady_states}

We are now concerned with  the proof of existence of non homogeneous solutions of system~\eqref{eq:steady_states}. According to the algorithm outlined in Section~\ref{sec:validated_numerics}, the first step is to reformulate  the problem in the form  $F(X)=0$, where $F$ is defined on a proper Banach space. Then we introduce the linear operators $A$ and $A^\dag$, and finally we provide the definition of the bounds $Y,Z_0,Z_1,Z_2(r)$. The latter are then combined to define the radii polynomial $P(r)$.

\subsection{Existence of steady states: the function $\bm{F}$}\label{sec:fun_F}

We preventively need to transform \eqref{eq:steady_states} into an equivalent system that is more amenable to the application of validated numerics techniques. We introduce further unknown functions and change of coordinates in order to remove the cross-diffusion nonlinearity and to obtain a system with only polynomial nonlinearities (which will be useful when deriving the validation estimates). Then we discretize the obtained system by using Fourier series and finally we introduce $F$ and the Banach space $\X$ in which we look for the solutions.

\subsubsection{ Auxiliary functions and polynomial system}
\label{sec:automatic_differentiation}

In order to remove the cross-diffusion nonlinearity we introduce the  function $w$ defined as
\begin{equation*}
w=(d_1+d_{12} v)u.
\end{equation*}
Expressing~\eqref{eq:steady_states} in term of the unknowns $v$ and $w$ gives simpler higher order terms, but the nonlinear terms become rational functions. To keep the nonlinearity in the form of polynomials, define the function $p$ as 
\begin{equation}
\label{eq:p}
p=\frac{1}{d_1+d_{12} v},
\end{equation}
so that $u=pw$. In term of  $w,v,p$, the system~\eqref{eq:steady_states} takes the form
\begin{equation*}
\left\{
\begin{aligned}
&w''+(r_1-a_1pw-b_1v)pw=0, \quad &\text{on }(0,1),\\
&d_2 v''+(r_2-b_2pw-a_2 v)v=0, \quad &\text{on }(0,1),\\
&v'(0)=v'(1)=0, \\
&w'(0)=w'(1)=0
\end{aligned}
\right.
\end{equation*}
with $p$ given  above as a function of $v(x)$. However, we want to also treat $p$ as an independent unknown, on the same footing as $w$ and $v$. For this, it is enough to append a differential equation and some initial conditions uniquely satisfied by the requested function $p(x)$. Let us consider the equation
\begin{equation*}
p'=-d_{12} p^2 v',
\end{equation*}
together with the constraint
\begin{equation*}
p(0)(d_1+d_{12} v(0))=1.
\end{equation*}
It is straightforward to check that the function $p(x)$ in \eqref{eq:p} is the only solution of such initial value problem. 
Finally, introducing a variable $s$ for $v'$, we obtain the system
\begin{equation}
\label{eq:steady_states_extended}
\left\{
\begin{aligned}
&w''+(r_1-a_1pw-b_1v)pw=0, \quad &\text{on }(0,1),\\
&d_2 s'+(r_2-b_2pw-a_2 v)v=0, \quad &\text{on }(0,1),\\
&p'+d_{12} sp^2=0, \quad &\text{on }(0,1),\\
&v'-s=0, \quad &\text{on }(0,1),\\
&p(0)(d_1+d_{12} v(0))=1,\\
&v'(0)=v'(1)=0, \\
&w'(0)=w'(1)=0.
\end{aligned}
\right.
\end{equation}
to be solved in the unknowns $w,v,p,s$. We point out that the usage of the variable $p$ to recover polynomial nonlinearities is inspired from~\cite{LesMirRan16}, where this technique was introduced in the context of validated numerics.

\subsubsection{Algebraic system in Fourier space}
We now expand the unknown functions and we project the differential system \eqref{eq:steady_states_extended}  onto the Fourier basis.
Because of the boundary conditions and \eqref{eq:p}, $v(x)$, $w(x)$ and $p(x)$ are written as cosine series.  On the opposite, since $s(x)=v'(x)$, the function $s(x)$ is expanded on the  sines basis. Precisely, consider 
\begin{equation}\label{def:fourier_expansion}
\begin{array}{ll}
{\displaystyle w(x)=w_0+2\sum_{k\geq 1}w_k\cos(k\pi x), \quad} &{\displaystyle v(x)=v_0+2\sum_{k\geq 1}v_k\cos(k\pi x),}\\
{\displaystyle p(x)=p_0+2\sum_{k\geq 1}p_k\cos(k\pi x),\quad} &{\displaystyle s(x)=s_0+2\sum_{k\geq 1}s_k\sin{(k\pi x)}.}
\end{array}
\end{equation}
The addition of the $s_0$ coefficient (which is clearly zero since $s=v'$) is deliberate, because we  see each of the sequence of Fourier coefficients $\left(v_k\right)_{k\geq 0}$, $\left(w_k\right)_{k\geq 0}$, $\left(p_k\right)_{k\geq 0}$ and $\left(s_k\right)_{k\geq 0}$ as an element of $\ell^1_\nu$. Plugging these series expansions into \eqref{eq:steady_states_extended} and then projecting back onto the cosine/sine basis, we obtain the following (infinite dimensional) algebraic system
\begin{equation}
\label{eq:algebraic_system}
\left\{
\begin{aligned}
&-(\pi k)^2w_k+r_1(p\ast w)_k-a_1(p\ast p\ast w\ast w)_k-b_1(p\ast v\ast w)_k=0,\quad &\forall~k\in \N,\\
&d_2 \pi k s_k+r_2v_k-a_2(v\ast v)_k-b_2(p\ast v\ast w)_k=0,\quad &\forall~k\in \N,\\
&-\pi k p_k+d_{12}(s\star p\ast p)_k=0,\quad &\forall~k\in \N,\\
&-\pi k v_k-s_k=0,\quad &\forall~k\in \N,\\
&\left(p_0+2\sum_{k\geq 1}p_k\right)\left(d_1+d_{12}\left(v_0+2\sum_{k\geq 1} v_k\right)\right)-1=0.
\end{aligned}
\right.
\end{equation}
to be solved for the unknown sequences $w=\{w_k\}_{k\geq 0},v=\{v_k\}_{k\geq 0}, p=\{p_k\}_{k\geq 0}, s=\{s_k\}_{k\geq 0}$.

Notice that because of the equation $-\pi k v_k-s_k=0$ with $k=0$, any solution of this system does indeed satisfy $s_0=0$. 
Notice also that for $k=0$, the equation $-\pi k p_k+d_{12}(s\star p\ast p)_k=0$ is an identity. Indeed $-0\pi p_0=0$, and it follows from the definition of the convolution products $\star$ that  $(s\star p\ast p)_0=0$. In other words, since $p(x)$ is even and $s(x)$ is odd, the product $sp^2$ is odd, hence the $0$-th Fourier coefficients vanishes.
Therefore this equation can be removed and we are then left with a {\it square} system.

\subsubsection{The $\bm{F=0}$ formulation}

For $\nu>1$, we define $\X_\nu=\left(\ell^1_\nu(\R)\right)^4$, where $\ell^1_\nu$ is given in Definition \ref{def:ell1}, and denote by $X=(v,w,p,s)$ any element in $\X_\nu$. We also use the notation $X_k$ to denote $(v_k,w_k,p_k,s_k)$. We endow $\X_\nu$ with the norm
\begin{equation}\label{eq:normXnu}
\left\Vert X\right\Vert_{\X_\nu} = \left\Vert v\right\Vert_\nu + \left\Vert w\right\Vert_\nu + \left\Vert p\right\Vert_\nu + \left\Vert s\right\Vert_\nu,
\end{equation}
which makes it a Banach space. We then define the function $F=(F^{(v)},F^{(w)},F^{(p)},F^{(s)})$ acting on $\X_{\nu}$ by
\begin{align}\label{eq:map_F}
&F_k^{(v)}(X)=-\pi k v_k-s_k,\quad &\forall~k\in \N, \\
&F_k^{(w)}(X)=-(\pi k)^2w_k+r_1(p\ast w)_k-a_1(p\ast p\ast w\ast w)_k-b_1(p\ast v\ast w)_,\quad &\forall~k\in \N, \\
&F_0^{(p)}(X)=\left(p_0+2\sum_{k\geq 1} p_k\right)\left(d_1+d_{12}\left(v_0+2\sum_{k\geq 1} v_k\right)\right)-1, \\
&F_k^{(p)}(X)=-\pi k p_k+d_{12}(s\star p\ast p)_k,\quad &\forall~k\ge 1, \\
&F_k^{(s)}(X)=d_2\pi ks_k+r_2v_k-a_2(v\ast v)_k-b_2(p\ast v\ast w)_k,\quad &\forall~k\in \N.
\end{align}

The next Lemma summarizes and justifies in a precise statement all the formal computations and substitutions made previously in this section, with the goal of solving system \eqref{eq:steady_states}.
\begin{lemma}
\label{lem:rigorous_justification}
Let $\nu>1$. Assume that there exists $X\in\X_{\nu}$ such that $F(X)=0$ and consider as in~\eqref{def:fourier_expansion} the functions $v$, $w$, $p$ and $s$. Assume also that the coefficients $\left(v_k\right)_{k\geq 0}$ and $\left(w_k\right)_{k\geq 0}$ are such that the functions $v$ and $w$ are positive. Define the function $u=pw$. Then $u$ and $v$ are smooth positive functions that solve~\eqref{eq:steady_states}.
\end{lemma}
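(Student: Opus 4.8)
The plan is to read the construction of Section~\ref{sec:fun_F} backwards: starting from a zero $X=(v,w,p,s)$ of $F$ in $\X_\nu$, I recover a genuine classical solution of~\eqref{eq:steady_states}. First I would establish regularity. Since $\nu>1$, the condition $\left\Vert\cdot\right\Vert_\nu<\infty$ forces the geometric decay $|u_k|=o(\nu^{-k})$ on each component, so each of $v,w,p,s$ is given by an absolutely and geometrically convergent (co)sine series, hence is real-analytic, in particular $C^\infty$, and may be differentiated term by term arbitrarily often (the factors $k^j\nu^{-k}$ remain summable). Because $v,w,p$ are cosine series, their derivatives are sine series vanishing at $x=0$ and $x=1$, so the Neumann conditions $v'(0)=v'(1)=0$ and $w'(0)=w'(1)=0$ hold automatically.

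Next I would set up the dictionary between the algebraic system and the differential system~\eqref{eq:steady_states_extended}. By the correspondence recalled before Lemma~\ref{lem:convolution_algebra}, for sequences in $\ell^1_\nu$ the convolution products $\ast,\star,\bullet$ are exactly the Fourier coefficients of the associated products of functions; thus $(p\ast w)$, $(p\ast p\ast w\ast w)$, $(p\ast v\ast w)$ and $(s\star p\ast p)$ are the coefficients of $pw$, $(pw)^2$, $pvw$ and $sp^2$ respectively. Matching the $k$-th coefficient on each line of $F(X)=0$ with the corresponding derivative coefficient ($-(\pi k)^2w_k$ for $w''$, $\pi k s_k$ for $s'$, $-\pi k p_k$ for $p'$), and invoking that two continuous functions with identical Fourier coefficients coincide, each block of $F(X)=0$ becomes the corresponding pointwise identity of~\eqref{eq:steady_states_extended}. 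In particular $F^{(v)}(X)=0$ reads $s_0=0$ at $k=0$ and $s_k=-\pi k v_k$ at $k\ge1$, which is precisely $s=v'$, while the scalar equation $F_0^{(p)}(X)=0$ is the constraint $p(0)\bigl(d_1+d_{12}v(0)\bigr)=1$.

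The crux is then to upgrade the two $p$-equations into the constitutive identity $p=1/(d_1+d_{12}v)$, since the appended ODE $p'=-d_{12}sp^2$ alone only determines $p$ up to its value at a point. I would introduce $g:=p\,(d_1+d_{12}v)$ and compute, using $v'=s$ and $p'=-d_{12}sp^2$,
\[
g'=p'(d_1+d_{12}v)+d_{12}p\,v'=d_{12}ps\,(1-g),
\]
a linear first-order ODE with continuous coefficient $d_{12}ps$. Since $F_0^{(p)}(X)=0$ gives $g(0)=1$ and the constant $g\equiv1$ solves this initial value problem, uniqueness forces $g\equiv1$, i.e. $p(d_1+d_{12}v)\equiv1$. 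As $d_1>0$ and $v\ge0$ we have $d_1+d_{12}v>0$, so $p=1/(d_1+d_{12}v)$ is smooth and positive.

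Finally I would assemble the solution. Set $u:=pw=w/(d_1+d_{12}v)$, which is smooth (nonvanishing denominator) and positive (both $p>0$ and $w>0$), while $v>0$ by hypothesis. Writing $w=(d_1+d_{12}v)u$, the first line of~\eqref{eq:steady_states_extended} becomes $((d_1+d_{12}v)u)''+(r_1-a_1u-b_1v)u=0$, and the second line, together with $s=v'$, becomes $d_2v''+(r_2-b_2u-a_2v)v=0$; these are exactly the two equations of~\eqref{eq:steady_states}. The conditions $v'(0)=v'(1)=0$ hold from the first paragraph, and since $u'=\bigl(w'(d_1+d_{12}v)-d_{12}w\,v'\bigr)/(d_1+d_{12}v)^2$ with $w'$ and $v'$ vanishing at $0$ and $1$, also $u'(0)=u'(1)=0$. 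Thus $u,v$ are smooth positive functions solving~\eqref{eq:steady_states}. The only genuinely non-routine step is the ODE uniqueness argument recovering $p$; everything else is bookkeeping made legitimate by the geometric decay that $\nu>1$ guarantees.
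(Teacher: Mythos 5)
Your proposal is correct and follows essentially the same route as the paper's proof: regularity from the exponential decay of the coefficients, translation of $F(X)=0$ back into the extended system~\eqref{eq:steady_states_extended}, recovery of $p=1/(d_1+d_{12}v)$ by uniqueness for the initial value problem, and then reassembly of $(u,v)$ with the boundary conditions. Your auxiliary function $g=p\,(d_1+d_{12}v)$ merely repackages the paper's uniqueness step (which compares $p$ directly with the candidate $1/(d_1+d_{12}v)$ solving the same IVP) as a linear ODE, a harmless and slightly more self-contained variant.
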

\begin{proof}
First notice that since $X\in\X_\nu$ with $\nu>1$, the Fourier coefficients are decaying exponentially fast to 0, and thus the functions $v$, $w$, $p$ and $s$ are well defined and smooth (in fact analytic) $2$-periodic functions. Then, having $F(X)=0$ means exactly that the sequences $\left(v_k\right)_{k\geq 0}$, $\left(w_k\right)_{k\geq 0}$, $\left(p_k\right)_{k\geq 0}$ and $\left(s_k\right)_{k\geq 0}$ solve~\eqref{eq:algebraic_system}, which in turn implies that the functions $v$, $w$, $p$ and $s$ solve~\eqref{eq:steady_states_extended}. All the derivatives needed in~\eqref{eq:steady_states_extended} are legitimate thanks to the exponential decay of the coefficients. Besides, since $p$ satisfies the differential equation $p'+d_{12}p^2v'=0$ and $p(0)(d_1+d_{12}v(0))=1$, by uniqueness we have 
\begin{equation*}
p=\frac{1}{d_1+d_{12}v}.
\end{equation*}
Therefore $w=\frac{u}{p}=(d_1+d_{12}v)u$ and $(u,v)$ does indeed solve~\eqref{eq:steady_states} (the boundary condition for $u$ is also satisfied since $u'(0)=p'(0)w(0)+p(0)w'(0)=0$ and $u'(1)=p'(1)w(1)+p(1)w'(1)=0$).
\end{proof}

\subsection{Existence of steady states: the operators $\bm{A}$ and $\bm{A^\dag}$}\label{sec:fixed_point}

As outlined in Remark \eqref{rmk:thm}, the definition of the operators $A$ and $A^\dag$ is based on some approximate solution $\bar X$, which  is computed as numerical zero of a finite dimensional projection of $F(X)=0$.

Extending the notations introduced in Definition~\ref{def:truncation}, for $X\in\X_{\nu}$ we denote $\hat X^m$ the vector of truncated sequences, i.e. 
\begin{equation*}
\hat X^m=(\hat v^m,\hat w^m,\hat p^m,\hat s^m).
\end{equation*}
Similarly,
\begin{equation*}
\hat F^m = \left(\left(F^{(v)}_k\right)_{0\leq k<m},\left(F^{(w)}_k\right)_{0\leq k<m},\left(F^{(p)}_k\right)_{0\leq k<m},\left(F^{(s)}_k\right)_{0\leq k<m}\right).
\end{equation*}   
We consider $\hat F^m$ as acting on truncated sequences $\hat X^m$ only, so that we can see it as a function mapping $\R^{4m}$ to itself. Therefore, finding $\hat X^m$ such that $\hat F^m(\hat X^m)=0$ is a finite dimensional problem that can be solved numerically. We now assume to have computed numerically a zero of $\hat F^m$,  denoted by $\bar X$. 
 
The linear operator $A^{\dag}$ is defined as an approximation of $DF(\bar X)$.  However, since we will also need to construct   an approximate inverse of $A^{\dag}$,  $A^{\dag}$ is required to have a \emph{simple} structure. In practice we impose that $A^\dag$ acts diagonally on the {\it tail} $\{X_k\}_{k\geq m}$. More precisely, we define $A^{\dag}$ (acting on $X=(v,w,p,s)\in\X_{\nu}$), as
\begin{equation}\label{eq:Adag_m}
\widehat{A^{\dag}X}^m= D\hat F^m(\bar X)\hat X^m,
\end{equation}
and
\begin{equation*}
\left(A^{\dag}X\right)_k = \left(-\pi k v_k, -(\pi k)^{2} w_k, -\pi k p_k, d_2 \pi k s_k\right),\quad \forall~k\geq m.
\end{equation*}

The operator $A$ is then constructed  as an approximate inverse of $A^{\dag}$. We consider $\hat A^m$ a numerically computed inverse of $D\hat F^m(\bar X)$ and define $A$ (acting on $X=(v,w,p,s)\in\X_{\nu}$), as
\begin{equation*}
\widehat{AX}^m= \hat A^m\hat X^m,
\end{equation*}
and
\begin{equation*}
\left(AX\right)_k = \left(-(\pi k)^{-1} v_k, -(\pi k)^{-2} w_k, -(\pi k)^{-1} p_k, (d_2 \pi k)^{-1} s_k\right),\quad \forall~k\geq m.
\end{equation*}

The definition of $A$ and the fact that $\ell^1_\nu$ is a algebra for both convolution products $\ast$ and $\star$ (see Lemma~\ref{lem:convolution_algebra}) ensure that $AF$ does map $\X_\nu$ into itself, as requested in the hypothesis of Theorem~\ref{th:radii_pol}. 

\begin{remark}
\label{rem:diagonal}
To define the action  of $A^{\dag}$ on the tail space, we simply kept the asymptotically dominant terms of the derivative $DF(\bar X)$. Since these terms act  diagonally, we are able to easily and analytically  invert the tail of $A^{\dag}$ and hence to define $A$. However, the fact that the dominant terms of the derivative are diagonal is not a mere happenstance, rather it is the result of  the various reformulations performed in Section~\ref{sec:automatic_differentiation}. Had we not introduced the function $w$, the Fourier expansion of the cross-diffusion term would create a messy dominant expression that we would not be able to invert analytically.
\end{remark}

\subsection{Existence of steady states: the bounds $\bm{Y}$ and $\bm{Z_i(r)}$}
\label{sec:bounds_steady_states}

Having the Banach space $\X_\nu$, the function $F$, the approximate solution $\bar X=(\bar v,\bar w,\bar p,\bar s)$ and the operators  $A$, $A^{\dag}$ in hands, we now proceed to derive computable bounds $Y$, $Z_0$, $Z_1$ and $Z_2$ satisfying~\eqref{def:Y}-\eqref{def:Z_2} (for $\X=\X_\nu$).

\subsubsection{The bound $\bm{Y}$}
\label{sec:Y}
The definition of the bound $Y$  is rather straightforward, and we just consider
\begin{align}
\label{eq:Y_steady_states}
Y= \left\Vert AF(\bar X) \right\Vert_{\X_\nu}.
\end{align}
The key observation here is that $Y$  can be computed explicitly. Indeed, we recall that $\bar X\in \hat X^m$ is  a truncated sequence, i.e. $\bar X_k=(\bar v_k,\bar w_k,\bar p_k,\bar s_k)=(0,0,0,0)$ for all $k\geq m$. Therefore we have
\begin{align*}
&F_k^{(v)}(\bar X)=0,\quad \forall~k\geq m, \qquad &F_k^{(w)}(\bar X)=0,\quad \forall~k\geq 4m-3,\\
&F_k^{(p)}(\bar X)=0,\quad \forall~k\geq 3m-2, \qquad &F_k^{(s)}(\bar X)=0,\quad \forall~k\geq 3m-2,
\end{align*}
and thus $F(\bar X)$ only has a finite number of non zero coefficients. This is also true for $AF(\bar X)$ (thanks to the diagonal structure of the tail of $A$), and therefore $\left\Vert AF(\bar X) \right\Vert_{\X_\nu}$ can be evaluated on a computer. To be completely precise, what we mean by~\eqref{eq:Y_steady_states} is that a (sharp) upper bound of $\left\Vert AF(\bar X) \right\Vert_{\X_\nu}$ can be computed using interval arithmetic, and that we define $Y$ to be this upper bound. We are going to repeat this abuse of language whenever we define bounds that involve terms that have to be evaluated on a computer.

\subsubsection{The bound $\bm{Z_0}$}
\label{sec:Z0}

In this section we focus on getting a bound $Z_0$ satisfying~\eqref{def:Z_0}. Here and thereafter, when dealing with linear operators on $\X_\nu$, it is convenient to use a \emph{block notation}. For a linear operator $B:\X_\nu\to\X_\nu$, we consider the decomposition
\begin{equation*}
B=\begin{pmatrix}
B^{(v,v)} & B^{(v,w)} & B^{(v,p)} & B^{(v,s)} \\
B^{(w,v)} & B^{(w,w)} & B^{(w,p)} & B^{(w,s)} \\
B^{(p,v)} & B^{(p,w)} & B^{(p,p)} & B^{(p,s)} \\
B^{(s,v)} & B^{(s,w)} & B^{(s,p)} & B^{(s,s)}
\end{pmatrix},\quad {\rm each}\  B^{(i,j)}:\ell^1_\nu(\R)\to \ell^1_\nu(\R)
\end{equation*}
so that, for $X=(v,w,p,s)\in\X_\nu$,
\begin{equation*}
\left(BX\right)^{(v)}= B^{(v,v)}v + B^{(v,w)}w + B^{(v,p)}p + B^{(v,s)}s,
\end{equation*}
and similarly for the other components. Thus, recalling \eqref{eq:normXnu} and the operator norm \eqref{eq:oper_norm}, 
\begin{align}\label{eq:norm_B_Xnu}
\left\Vert BX \right\Vert_{\X_\nu} &= \left\Vert \left(BX\right)^{(v)} \right\Vert_{\nu} + \left\Vert \left(BX\right)^{(w)} \right\Vert_{\nu} + \left\Vert \left(BX\right)^{(p)} \right\Vert_{\nu} + \left\Vert \left(BX\right)^{(s)} \right\Vert_{\nu} \\
&\leq \Theta_B^{(v)} \left\Vert v\right\Vert_\nu + \Theta_B^{(w)} \left\Vert w\right\Vert_\nu + \Theta_B^{(p)} \left\Vert p\right\Vert_\nu + \Theta_B^{(s)} \left\Vert s\right\Vert_\nu \\
&\leq \max\left[\Theta_B^{(v)},\Theta_B^{(w)},\Theta_B^{(p)},\Theta_B^{(s)}\right]\left\Vert X\right\Vert_{\X_\nu},
\end{align}
where
\begin{equation*}
\Theta_B^{(i)} = \VERT B^{(v,i)}\VERT_\nu + \VERT B^{(w,i)}\VERT_\nu + \VERT B^{(p,i)}\VERT_\nu + \VERT B^{(s,i)}\VERT_\nu, \quad \forall i\in \{v,w,p,s\}.
\end{equation*}
Therefore, we define
\begin{equation}
\label{eq:Z0_steady_states}
Z_0=\max\left[\Theta_{I-AA^{\dag}}^{(v)},\Theta_{I-AA^{\dag}}^{(w)},\Theta_{I-AA^{\dag}}^{(p)},\Theta_{I-AA^{\dag}}^{(s)}\right].
\end{equation}
Notice that, since the tail part of $A$ and $A^{\dag}$ are exact inverse of each other by definition, the tail part of $I-AA^{\dag}$ is zero. Therefore, each block in the decomposition of $I-AA^{\dag}$ has only finitely many non zero coefficients, and each $\Theta_{I-AA^{\dag}}^{(i)}$ can be computed using Lemma~\ref{lem:operator_norm_nu}, the supremum and the sum ranging only over finitely many coefficients.

\subsubsection{The bound $\bm{Z_1}$}
\label{sec:Z1}

In this section we focus on getting a bound $Z_1$ satisfying~\ref{def:Z_1}.
\begin{lemma}
Let $\hat\alpha^m_v,\hat\alpha^m_w,\hat\alpha^m_p,\hat\alpha^m_s$ be vectors in $\R^{4m}$ each, defined as

\begin{align*}
(\hat\alpha^m_v)_0 &= \begin{pmatrix}
0 \\
\Phi_0^m(-b_1(\bar p\ast\bar w),\nu) \\
\left\vert d_{12}\left(\bar p_0+2\sum_{k\geq 1}\bar p_k\right)\right\vert\frac{2}{\nu^m} \\
\Phi_0^m(-2a_2\bar v -b_2(\bar p\ast\bar w),\nu)
\end{pmatrix},\quad  (\hat\alpha^m_w)_0 =
\begin{pmatrix}
0 \\
\Phi_0^m(r_1\bar p-2a_1(\bar p\ast\bar p\ast\bar w)-b_1(\bar p\ast\bar v),\nu) \\
0 \\
\Phi_0^m(-b_2(\bar p\ast\bar v),\nu)
\end{pmatrix}, \\
(\hat\alpha^m_p)_0&=  \begin{pmatrix}
0 \\
\Phi_0^m(r_1\bar w-2a_1(\bar p\ast\bar w\ast\bar w)-b_1(\bar v\ast\bar w),\nu) \\
\left\vert d+d_{12}\left(\bar v_0+2\sum_{k\geq 1}\bar v_k\right)\right\vert\frac{2}{\nu^m} \\
\Phi_0^m(-b_2(\bar v\ast\bar w),\nu)
\end{pmatrix}, \quad (\hat\alpha^m_s)_0  =
\begin{pmatrix}
0 \\
0 \\
\Phi_0^m(d_{12}(\bar p\ast\bar p),\nu) \\
0
\end{pmatrix}
\end{align*}
and for  each $1\leq k<m$,
\begin{align*}
(\hat\alpha^m_v)_k&= \begin{pmatrix}
0 \\
\Phi_k^m(-b_1(\bar p\ast\bar w),\nu) \\
0 \\
\Phi_k^m(-2a_2\bar v -b_2(\bar p\ast\bar w),\nu)
\end{pmatrix}, \quad (\hat\alpha^m_w)_k= 
\begin{pmatrix}
0 \\
\Phi_k^m(r_1\bar p-2a_1(\bar p\ast\bar p\ast\bar w)-b_1(\bar p\ast\bar v),\nu) \\
0 \\
\Phi_k^m(-b_2(\bar p\ast\bar v),\nu)
\end{pmatrix}  \\
(\hat\alpha^m_p)_k&= \begin{pmatrix}
0 \\
\Phi_k^m(r_1\bar w-2a_1(\bar p\ast\bar w\ast\bar w)-b_1(\bar v\ast\bar w),\nu) \\
\Phi_k^m(2d_{12}(\bar s\star\bar p),\nu) \\
\Phi_k^m(-b_2(\bar v\ast\bar w),\nu)
\end{pmatrix}, \quad (\hat\alpha^m_s)_k=
\begin{pmatrix}
0 \\
0 \\
\Phi_k^m(d_{12}(\bar p\ast\bar p),\nu) \\
0
\end{pmatrix}.
\end{align*}
Define
\begin{align}
\label{eq:Z1_steady_states}
Z_1 &= \max\left[\left\Vert \vert  A\vert \hat\alpha^m_v \right\Vert_{\X_\nu},\left\Vert \vert  A\vert \hat\alpha^m_w \right\Vert_{\X_\nu},\left\Vert \vert  A\vert \hat\alpha^m_p \right\Vert_{\X_\nu},\left\Vert \vert  A\vert \hat\alpha^m_s \right\Vert_{\X_\nu}\right] \nonumber\\
&\quad +\max\left[\left(\frac{\left\Vert b_1(\bar p\ast \bar w)\right\Vert_{\nu}}{(\pi m)^2}+\frac{\left\Vert r_2-2a_2\bar v-b_2(\bar p\ast\bar w)\right\Vert_{\nu}}{d \pi m}\right), \right. \nonumber\\
&\qquad \qquad \left(\frac{\left\Vert r_1\bar p-2a_1(\bar p\ast\bar p\ast\bar w)-b_1(\bar p\ast\bar v)\right\Vert_{\nu}}{(\pi m)^2} +\frac{\left\Vert b_2(\bar p\ast\bar v)\right\Vert_{\nu}}{d \pi m}\right), \nonumber\\
&\qquad \qquad  \left(\frac{\left\Vert r_1\bar w-2a_1(\bar p\ast\bar w\ast\bar w)-b_1(\bar v\ast\bar w)\right\Vert_{\nu}}{(\pi m)^2} +\frac{\left\Vert 2d_{12}(\bar s\star\bar p)\right\Vert_{\nu}}{\pi m} + \frac{\left\Vert b_2(\bar v\ast\bar w)\right\Vert_{\nu}}{d \pi m}\right), \nonumber\\
&\left. \qquad \qquad \left(\frac{1}{\pi m} +\frac{\left\Vert d_{12}(\bar p\ast\bar p)\right\Vert_{\nu}}{\pi m}\right) \right].
\end{align}

Then
$$
Z_1\geq \left\VERT A\left(DF(\bar X)-A^{\dag}\right) \right\VERT_{\X_\nu}.
$$
\end{lemma}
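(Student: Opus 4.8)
The plan is to compute the operator $C := DF(\bar X) - A^\dag$ block by block and to split it according to whether the output mode is truncated ($k<m$) or tail ($k\geq m$). The crucial observation is that, by the very definition of $A^\dag$ in~\eqref{eq:Adag_m}, the finite-to-finite block of $C$ vanishes: for $k,j<m$ the entry $DF(\bar X)(k,j)$ coincides with the corresponding entry of $D\hat F^m(\bar X)$, because $\bar X$ is truncated and hence the convolutions defining $F$ and $\hat F^m$ agree at $\bar X$. Writing $C = C_{\mathrm{fin}} + C_{\mathrm{tail}}$, where $C_{\mathrm{fin}}$ collects the entries with finite output $k<m$ and $C_{\mathrm{tail}}$ those with tail output $k\geq m$, the part $C_{\mathrm{fin}}$ therefore only couples tail inputs $j\geq m$ to finite outputs, while $C_{\mathrm{tail}}$ is exactly the nonlinear (non-leading) part of $DF(\bar X)$, since the diagonal leading differential operator is subtracted off by the tail of $A^\dag$. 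By the triangle inequality $\VERT AC\VERT_{\X_\nu} \leq \VERT AC_{\mathrm{fin}}\VERT_{\X_\nu} + \VERT AC_{\mathrm{tail}}\VERT_{\X_\nu}$, and I will bound the two terms by the first and second maxima in~\eqref{eq:Z1_steady_states} respectively.

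For the first term, I would differentiate each nonlinear component of $F$ and read off, for every pair of components $(i,j)$, the convolution kernel whose tail generates the finite-output entries of $C_{\mathrm{fin}}$; for instance $\partial_v F^{(w)}$ produces $-b_1(\bar p\ast\bar w)$ and $\partial_v F^{(s)}$ produces $r_2-2a_2\bar v-b_2(\bar p\ast\bar w)$, the symmetry factors $2$ in the quadratic and cubic terms coming from the symmetry of the convolution products. Applying Lemma~\ref{lem:convolution_truncation} to each such kernel bounds the modulus of the $k$-th finite output, for a unit tail input in component $i$, by exactly the entry $(\hat\alpha^m_i)_k$: both the shift term $g_{\vert k-j\vert}$ and the reflection term $g_{k+j}$ coming from the cosine/sine convolutions are absorbed by $\Phi^m_k$, since the index $l$ there ranges over $\vert l\vert\geq m$. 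The only non-convolution contribution is the algebraic constraint $F^{(p)}_0$, whose $v$- and $p$-derivatives are the constants appearing with the weights $\tfrac{2}{\nu^m}$, obtained directly from the dual formula $\VERT b\VERT_\nu=\sup_j\vert b(j)\vert/\nu^{\vert j\vert}$. Collecting these componentwise estimates gives $\vert (C_{\mathrm{fin}}X)_k\vert \leq \sum_i (\hat\alpha^m_i)_k\Vert X^{(i)}\Vert_\nu$, so that applying $\vert A\vert$ and maximizing over the input component $i$ yields $\VERT AC_{\mathrm{fin}}\VERT_{\X_\nu}\leq \max_i\Vert \vert A\vert\hat\alpha^m_i\Vert_{\X_\nu}$, the first max in~\eqref{eq:Z1_steady_states}.

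For the second term, I use that on tail outputs $A$ is the diagonal operator with entries $-(\pi k)^{-1}$, $-(\pi k)^{-2}$, $-(\pi k)^{-1}$, $(d_2\pi k)^{-1}$, each decreasing in $k$, so its norm over $k\geq m$ is attained at $k=m$, giving the factors $(\pi m)^{-2}$, $(\pi m)^{-1}$ and $(d\pi m)^{-1}$. Factoring these out, what remains is the restriction to tail outputs of the convolution operators obtained by differentiating the nonlinearities, and by Lemma~\ref{lem:convolution_algebra} each such operator has $\ell^1_\nu$-operator-norm at most the $\nu$-norm of its kernel (for example $\Vert b_1(\bar p\ast\bar w)\Vert_\nu$ in the $(w,v)$ block, $\Vert 2d_{12}(\bar s\star\bar p)\Vert_\nu$ in the $(p,p)$ block, and the bare $\tfrac{1}{\pi m}$ in the $(v,s)$ block coming from the constant coefficient $-s_k$ in $F^{(v)}$). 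Organizing these contributions with the block-norm inequality~\eqref{eq:norm_B_Xnu}, that is, summing over the output component for each fixed input component and then taking the maximum over input components, reproduces exactly the second max in~\eqref{eq:Z1_steady_states}; adding the two bounds gives $Z_1\geq \VERT A(DF(\bar X)-A^\dag)\VERT_{\X_\nu}$.

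The main obstacle is the bookkeeping of the derivatives of the several convolution nonlinearities: one must correctly track the symmetry factors, distinguish the $\ast$ and $\star$ structure in the $p$-equation, and, most delicately, separate in each entry of $DF(\bar X)$ the shift and reflection terms feeding the finite outputs (bounded via $\Phi^m_k$) from the full convolution operator acting on the tail outputs (bounded via the Banach-algebra estimate), while treating the single algebraic row $F^{(p)}_0$ by hand.
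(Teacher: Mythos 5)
Your proposal is correct and follows essentially the same route as the paper: split $U=(DF(\bar X)-A^\dag)X$ into the finite-output part (where the definition $\widehat{A^{\dag}X}^m= D\hat F^m(\bar X)\hat X^m$ cancels all contributions from $\hat X^m$, leaving only tail inputs to be bounded via Lemma~\ref{lem:convolution_truncation} and the $\Phi^m_k$ quantities, with the algebraic row $F^{(p)}_0$ handled by the dual-norm formula) and the tail-output part (where only the diagonal leading terms are cancelled, and the remaining convolution operators are bounded by the Banach-algebra estimate of Lemma~\ref{lem:convolution_algebra} combined with the monotone decay of the diagonal tail of $A$ evaluated at $k=m$). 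The bookkeeping you describe, including the symmetry factors and the $\ast$ versus $\star$ distinction, matches the paper's computation.
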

\proof
According to~\eqref{def:Z_1}, we need a bound for  
\begin{equation*}
A\left(DF(\bar X)-A^{\dag}\right)X,
\end{equation*}
for $X\in B_{\X_\nu}(0,1)$. Denoting $U=\left(DF(\bar X)-A^{\dag}\right)X$ and using the triangular inequality, we have
\begin{align*}
\left\Vert A\left(DF(\bar X)-A^{\dag}\right)X \right\Vert_{\X_\nu} &\leq \left\Vert \vert A\vert \vert U\vert \right\Vert_{\X_\nu} \\
&\leq \Vert \vert A\vert \vert \hat U^m\vert \Vert_{\X_\nu} + \Vert \vert A\vert \vert \check U^m\vert \Vert_{\X_\nu},
\end{align*} 
where here and in the sequel, the absolute values must be understood component-wise. We point out that, since $A$ is built as a finite dimensional block $\hat A^m$  (acting on $\hat U^m$) and a diagonal tail, it follows that $\vert A\vert \vert \hat U^m\vert=\vert \hat A^m\vert \vert \hat U^m\vert$ is a finite vector, (it has non zero components only for $k<m$), whereas 
$\vert A\vert \vert \check U^m\vert$ has non zero components only for $k\geq m$. We  provide a bound for both terms separately.

At first, let us compute a bound on  
$\vert \hat U^m\vert$. Recalling from \eqref{eq:Adag_m} that $A^\dag$ is defined so   that  
\begin{equation*}
\widehat{A^{\dag}X}^m=D\hat F^m(\bar X)\hat X^m,
\end{equation*} 
it follows that  in computing $\hat U^m$ all the linear contributions of $X$ cancel out. Explicitly,  using Lemma~\ref{lem:convolution_truncation}, a meticulous though straightforward analysis gives
 \begin{equation*}
\vert \hat U^m\vert \leq (\hat\alpha^m_v) \left\Vert v\right\Vert_{\nu} + (\hat\alpha^m_w)\left\Vert w\right\Vert_{\nu} + (\hat\alpha^m_p) \left\Vert p\right\Vert_{\nu} + (\hat\alpha^m_s) \left\Vert s\right\Vert_{\nu}. 
\end{equation*} 
The vectors $\hat\alpha^m_i$ can each be seen as an element of $\R^{4m}$, or equivalently of $\X_\nu$ with coefficients equal to $0$ for all $k\geq m$. Inserting he previous inequality into  $\vert A\vert \vert \hat U^m\vert$, we have that
\begin{align}
\Vert \vert  A\vert \vert\hat U^m\vert \Vert_{\X_\nu} &\leq \left\Vert \vert  A\vert \hat\alpha^m_v \right\Vert_{\X_\nu}\left\Vert v\right\Vert_{\nu} + \left\Vert \vert  A\vert \hat\alpha^m_w \right\Vert_{\X_\nu}\left\Vert w\right\Vert_{\nu} + \left\Vert \vert  A\vert \hat\alpha^m_p \right\Vert_{\X_\nu}\left\Vert p\right\Vert_{\nu} + \left\Vert \vert  A\vert \hat\alpha^m_s \right\Vert_{\X_\nu}\left\Vert s\right\Vert_{\nu} \nonumber \\ 
& \leq \max\left[\left\Vert \vert  A\vert \hat\alpha^m_v \right\Vert_{\X_\nu},\left\Vert \vert  A\vert \hat\alpha^m_w \right\Vert_{\X_\nu},\left\Vert \vert  A\vert \hat\alpha^m_p \right\Vert_{\X_\nu},\left\Vert \vert  A\vert \hat\alpha^m_s \right\Vert_{\X_\nu}\right] \left\Vert X \right\Vert_{\X_\nu}, \label{eq:Z1_1}
\end{align}
the maximum being taken over terms that can all be evaluated on a computer.

For the tail part (i.e. for modes $k\geq m$), $A^{\dag}$ only cancels the diagonal dominant terms, and we have
\begin{align*}
\vert U_k\vert &\leq \begin{pmatrix}
0 \\
\left(\vert b_1 (\bar p\ast \bar w) \vert \ast \vert v\vert\right)_k \\
0 \\
\left(\vert r_2-2a_2\bar v-b_2(\bar p\ast\bar w)\vert \ast \vert v \vert\right)_k
\end{pmatrix} + 
\begin{pmatrix}
0 \\
\left(\vert r_1\bar p-2a_1(\bar p\ast\bar p\ast\bar w)-b_1(\bar p\ast\bar v)\vert \ast \vert w \vert\right)_k \\
0 \\
\left(\vert b_2(\bar p\ast\bar v)\vert \ast \vert w\vert\right)_k
\end{pmatrix} \\
& \quad + \begin{pmatrix}
0 \\
\left(\vert r_1\bar w-2a_1(\bar p\ast\bar w\ast\bar w)-b_1(\bar v\ast\bar w)\vert\ast\vert p\vert\right)_k \\
\left(\vert 2\alpha(\bar s\star\bar p)\vert \ast\vert p\vert\right)_k \\
\left(\vert b_2(\bar v\ast\bar w)\vert \ast \vert p\vert\right)_k
\end{pmatrix}  + 
\begin{pmatrix}
\vert s\vert_k \\
0 \\
\left(\vert d_{12}(\bar p\ast\bar p)\vert\star\vert s\vert\right)_k \\
0
\end{pmatrix}.
\end{align*}
Using Lemma~\ref{lem:convolution_algebra}, and the definition of the tail part of $A$, we get
\begin{align*}
\left\Vert \vert A\vert \vert \check U^m\vert \right\Vert_{\X_\nu} &\leq \left(\frac{\left\Vert b_1(\bar p\ast \bar w)\right\Vert_{\nu}}{(\pi m)^2}+\frac{\left\Vert r_2-2a_2\bar v-b_2(\bar p\ast\bar w)\right\Vert_{\nu}}{d \pi m}\right)\left\Vert v\right\Vert_{\nu} \\
&\quad + \left(\frac{\left\Vert r_1\bar p-2a_1(\bar p\ast\bar p\ast\bar w)-b_1(\bar p\ast\bar v)\right\Vert_{\nu}}{(\pi m)^2} +\frac{\left\Vert b_2(\bar p\ast\bar v)\right\Vert_{\nu}}{d \pi m}\right)\left\Vert w\right\Vert_{\nu} \\
&\quad + \left(\frac{\left\Vert r_1\bar w-2a_1(\bar p\ast\bar w\ast\bar w)-b_1(\bar v\ast\bar w)\right\Vert_{\nu}}{(\pi m)^2} +\frac{\left\Vert 2\alpha(\bar s\star\bar p)\right\Vert_{\nu}}{\pi m} + \frac{\left\Vert b_2(\bar v\ast\bar w)\right\Vert_{\nu}}{d \pi m}\right)\left\Vert p\right\Vert_{\nu} \\
&\quad + \left(\frac{1}{\pi m} +\frac{\left\Vert d_{12}(\bar p\ast\bar p)\right\Vert_{\nu}}{\pi m}\right)\left\Vert s\right\Vert_{\nu} \\
&\leq \max\left[\left(\frac{\left\Vert b_1(\bar p\ast \bar w)\right\Vert_{\nu}}{(\pi m)^2}+\frac{\left\Vert r_2-2a_2\bar v-b_2(\bar p\ast\bar w)\right\Vert_{\nu}}{d \pi m}\right), \right. \\
&\qquad \qquad \left(\frac{\left\Vert r_1\bar p-2a_1(\bar p\ast\bar p\ast\bar w)-b_1(\bar p\ast\bar v)\right\Vert_{\nu}}{(\pi m)^2} +\frac{\left\Vert b_2(\bar p\ast\bar v)\right\Vert_{\nu}}{d \pi m}\right), \\
&\qquad \qquad  \left(\frac{\left\Vert r_1\bar w-2a_1(\bar p\ast\bar w\ast\bar w)-b_1(\bar v\ast\bar w)\right\Vert_{\nu}}{(\pi m)^2} +\frac{\left\Vert 2d_{12}(\bar s\star\bar p)\right\Vert_{\nu}}{\pi m} + \frac{\left\Vert b_2(\bar v\ast\bar w)\right\Vert_{\nu}}{d \pi m}\right), \\
&\left. \qquad \qquad \left(\frac{1}{\pi m} +\frac{\left\Vert d_{12}(\bar p\ast\bar p)\right\Vert_{\nu}}{\pi m}\right) \right]\left\Vert X\right\Vert_{\X_\nu}.
\end{align*}
The sum of the latter estimate and \eqref{eq:Z1_1} provides the bound $Z_1$ \eqref{eq:Z1_steady_states}.
\qed

It is important to remark that all the $\Phi_k^m$ functions involved in the definition of $\hat\alpha^m_i$, $i\in\{v,w,p,s\}$,  take as arguments sequences that only have a finite number of non zero coefficients and that are  given in terms of the numerical guess $\bar X$. Therefore, all the vectors of coefficients  and the bound $Z_1$ can be rigorously and explicitly computed. 

\subsubsection{The bound $\bm{Z_2}$}

In this section we focus on defining a bound $Z_2$ satisfying~\eqref{def:Z_2}. 

\begin{lemma}
Consider the {\it block} decomposition of  $A$ and the associated coefficients $\Theta_A$, as introduced in Section~\ref{sec:Z0}.
Define the quantities
\begin{align*}
&\alpha_{v,v'}=2a_2\Theta_A^{(s)},\quad \alpha_{w,w'}=2a_1\left\Vert \bar p\ast\bar p\right\Vert_\nu \Theta_A^{(w)},\quad \alpha_{p,p'}=2a_1\left\Vert \bar w\ast\bar w\right\Vert_\nu \Theta_A^{(w)} + 2d_{12}\left\Vert \bar s\right\Vert_\nu \Theta_A^{(p)},\\
&\alpha_{v,w'}=b_1\left\Vert \bar p\right\Vert_\nu\Theta_A^{(w)} + b_2\left\Vert \bar p\right\Vert_\nu\Theta_A^{(s)},\quad \alpha_{v,p'}=b_1\left\Vert \bar w\right\Vert_\nu\Theta_A^{(w)} + d_{12}\Theta_A^{(p)} + b_2\left\Vert \bar w\right\Vert_\nu\Theta_A^{(s)},\\
&\alpha_{w,p'}=\left\Vert r_1-4a_1\bar w\ast \bar p-b_1\bar v\right\Vert_{\nu}\Theta_A^{(w)} + b_2\left\Vert \bar v\right\Vert_\nu\Theta_A^{(s)},\quad \alpha_{p,s'}=2d_{12}\left\Vert \bar p\right\Vert_\nu\Theta_A^{(p)},
\end{align*}
and
\begin{equation*}
\alpha_{v,w',p''} = b_1\Theta_A^{(w)} + b_2\Theta_A^{(s)},\quad 
\alpha_{w,w',p''} = 4a_1\left\Vert\bar p\right\Vert_{\nu}\Theta_A^{(w)},
\end{equation*}
\begin{equation*} 
\alpha_{w,p',p''} = 4a_1\left\Vert\bar w\right\Vert_{\nu}\Theta_A^{(w)},\quad
\alpha_{p,p',s''} = 2d_{12}\Theta_A^{(p)},
\end{equation*}
and
\begin{equation*}
\alpha_{w,,w',p'',p'''} = 4a_1\Theta_A^{(w)}.
\end{equation*}
Define 
\begin{align}
\label{eq:Z2_steady_states}
Z_2(r) &= \max\left[\alpha_{v,v'},\alpha_{w,w'},\alpha_{p,p'},\alpha_{v,w'},\alpha_{v,p'},\alpha_{w,p'},\alpha_{p,s'}\right] \nonumber\\
&\quad + \frac{1}{2}\max\left[\alpha_{v,w',p''},\alpha_{w,w',p''},\alpha_{w,p',p''},\alpha_{p,p',s''}\right] r \nonumber\\
&\quad + \frac{1}{6}\alpha_{w,w',p'',p'''} r^2.
\end{align}
Then 
$$
 rZ_2(r)\geq \left\VERT A\left(DF(X)-DF(\bar X)\right) \right\VERT_\X  \quad \forall~X\in\B_\X(\bar X,r).
$$
\end{lemma}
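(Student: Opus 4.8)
The plan is to exploit that $F$ has only polynomial nonlinearities of degree at most $4$ (the quartic term being $-a_1(p\ast p\ast w\ast w)$ in $F^{(w)}$), so that $DF$ is a polynomial map of degree $3$ and the increment $DF(X)-DF(\bar X)$ can be expanded \emph{exactly}. Writing $X=\bar X+H$ with $\left\Vert H\right\Vert_{\X_\nu}\le r$ and applying the operator to an arbitrary direction $G\in\X_\nu$, multilinearity (a finite Taylor expansion that terminates because $D^5F=0$) gives
\begin{equation*}
\left(DF(\bar X+H)-DF(\bar X)\right)G = D^2F(\bar X)[H,G] + \tfrac{1}{2}D^3F(\bar X)[H,H,G] + \tfrac{1}{6}D^4F[H,H,H,G],
\end{equation*}
where $D^4F$ is constant. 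I would first observe that all the unbounded diagonal terms of $F$ (the factors $\pi k$, $(\pi k)^2$, $d_2\pi k$ coming from the differential operators, together with the whole linear block $F^{(v)}$) are linear in $X$, hence have constant derivative and cancel in the increment. Consequently only the genuinely nonlinear convolution terms survive, and each of these maps $\ell^1_\nu$ boundedly into itself by the Banach algebra estimate of Lemma~\ref{lem:convolution_algebra}; this is precisely what makes $A(DF(X)-DF(\bar X))$ a bounded operator whose norm we can estimate.

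The second step is to compute the three groups above blockwise and to bound them exactly as in the derivation of $Z_0$: I would evaluate each mixed partial derivative of the convolution nonlinearities, replace every base-point factor by its value at $\bar X$, estimate that factor by its $\nu$-norm through Lemma~\ref{lem:convolution_algebra}, and record in which output component $(v,w,p,s)$ it lands, weighting by the corresponding $\Theta_A^{(i)}$. The linear-in-$H$ group collects all the Hessian entries: differentiating $-a_2(v\ast v)$, $-a_1(p\ast p\ast w\ast w)$, $r_1(p\ast w)$, $-b_1(p\ast v\ast w)$, $d_{12}(s\star p\ast p)$ and the scalar constraint $F_0^{(p)}$ produces exactly the coefficients $\alpha_{v,v'},\dots,\alpha_{p,s'}$. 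The quadratic-in-$H$ group comes from the cubic terms $-b_1(p\ast v\ast w)$, $-b_2(p\ast v\ast w)$, $d_{12}(s\star p\ast p)$ and from the third derivatives of the quartic term, giving $\alpha_{v,w',p''},\dots,\alpha_{p,p',s''}$. The cubic-in-$H$ group comes solely from the quartic term, giving the single coefficient $\alpha_{w,w',p'',p'''}=4a_1\Theta_A^{(w)}$. The combinatorial multiplicities ($2$ and $4$) are read off from the derivatives of the monomials $v^2$, $p^2w^2$ and $sp^2$.

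Finally, I would use $\left\Vert H\right\Vert_{\X_\nu}\le r$ and $\left\Vert G\right\Vert_{\X_\nu}\le 1$ together with the max-over-input-component reduction already used in~\eqref{eq:norm_B_Xnu}: bounding $\left\Vert\left(A(DF(X)-DF(\bar X))G\right)\right\Vert_{\X_\nu}$ by the maximum over the four input blocks of the accumulated coefficients, the linear group yields the constant $\max[\alpha_{v,v'},\dots,\alpha_{p,s'}]$ multiplied by $r$, the quadratic group the term $\tfrac{1}{2}\max[\dots]\,r^2$ (the factor $\tfrac{1}{2}$ being the Taylor coefficient of $D^3F$), and the cubic group the term $\tfrac{1}{6}\alpha_{w,w',p'',p'''}\,r^3$. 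Factoring out one power of $r$ gives exactly $rZ_2(r)$ with $Z_2(r)$ as in~\eqref{eq:Z2_steady_states}. The main obstacle is not any individual inequality but the \emph{exhaustive and correct bookkeeping}: one must enumerate every mixed partial of the quartic convolution term without omission, assign each contribution to the right output block with the right $\Theta_A^{(i)}$ weight and the right multiplicity, and correctly incorporate the scalar equation $F_0^{(p)}$, whose sole second-order contribution $d_{12}$ enters $\alpha_{v,p'}$ through the bound $\vert u_0+2\sum_{k\ge1}u_k\vert\le\left\Vert u\right\Vert_\nu$ valid for $\nu>1$.
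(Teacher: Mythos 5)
Your proposal is correct and follows essentially the same route as the paper: a terminating Taylor expansion of $DF(\bar X+H)-DF(\bar X)$ into the bilinear, trilinear and quadrilinear forms $D^2F(\bar X)$, $\tfrac12 D^3F(\bar X)$, $\tfrac16 D^4F$, followed by blockwise estimation of each convolution term via the Banach algebra bound of Lemma~\ref{lem:convolution_algebra} and the weights $\Theta_A^{(i)}$, and a final max-over-input-components reduction as in~\eqref{eq:norm_B_Xnu}. You also correctly identify the two delicate bookkeeping points (the multiplicities coming from the monomials $v^2$, $p^2w^2$, $sp^2$, and the contribution $d_{12}\Theta_A^{(p)}$ of the scalar constraint $F_0^{(p)}$ to $\alpha_{v,p'}$), so no further comment is needed.
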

\proof
Consider the expansion:
\begin{align*}
A\left(DF(\bar X+X')-DF(\bar X)\right)X &= AD^2F(\bar X)(X',X) \\
&\quad + \frac{1}{2}AD^3F(\bar X)(X',X',X) \\
&\quad + \frac{1}{6}AD^4F(\bar X)(X',X',X',X).
\end{align*}
We provide bounds for each term on the right hand side, uniform for all $X\in B_{\X_\nu}(0,1)$ and $X'\in B_{\X_\nu}(0,r)$.

For the quadratic term, we have that
\begin{align*}
\left\Vert D^2F^{(v)}(\bar X)(X',X) \right\Vert_{\nu} = &  \,0,\\
\left\Vert D^2F^{(w)}(\bar X)(X',X) \right\Vert_{\nu} \leq &\, b_1\left\Vert \bar w\right\Vert_{\nu}\left(\left\Vert v\right\Vert_{\nu}\left\Vert p'\right\Vert_{\nu}+\left\Vert v'\right\Vert_{\nu}\left\Vert p\right\Vert_{\nu}\right)\\
& + b_1\left\Vert \bar p\right\Vert_{\nu}\left(\left\Vert v\right\Vert_{\nu}\left\Vert w'\right\Vert_{\nu}+\left\Vert v'\right\Vert_{\nu}\left\Vert w\right\Vert_{\nu}\right) \\
&+ 2a_1\left\Vert \bar w^2\right\Vert_{\nu}\left\Vert p\right\Vert_{\nu}\left\Vert p'\right\Vert_{\nu} + 2a_1\left\Vert \bar p^2\right\Vert_{\nu}\left\Vert w\right\Vert_{\nu}\left\Vert w'\right\Vert_{\nu} \\
& + \left\Vert r_1-4a_1\bar w\ast \bar p-b_1\bar v\right\Vert_{\nu} \left(\left\Vert w\right\Vert_{\nu}\left\Vert p'\right\Vert_{\nu}+\left\Vert w'\right\Vert_{\nu}\left\Vert p\right\Vert_{\nu}\right)\\
\left\Vert D^2F^{(p)}(\bar X)(X',X) \right\Vert_{\nu} \leq &\,  d_{12}\Big(2\left\Vert \bar s\right\Vert_{\nu}\left\Vert p\right\Vert_{\nu}\left\Vert p'\right\Vert_{\nu} + 2\left\Vert \bar p\right\Vert_{\nu}\left(\left\Vert p\right\Vert_{\nu}\left\Vert s'\right\Vert_{\nu}+\left\Vert p'\right\Vert_{\nu}\left\Vert s\right\Vert_{\nu}\right) \\
&\quad\quad  +\left\Vert v\right\Vert_{\nu}\left\Vert p'\right\Vert_{\nu}+\left\Vert v'\right\Vert_{\nu}\left\Vert p\right\Vert_{\nu}\Big),\\
\left\Vert D^2F^{(s)}(\bar X)(X',X) \right\Vert_{\nu} \leq &\, 2a_2\left\Vert v\right\Vert_{\nu}\left\Vert v'\right\Vert_{\nu} + b_2 \Big( \left\Vert \bar p\right\Vert_{\nu}\left(\left\Vert v\right\Vert_{\nu}\left\Vert w'\right\Vert_{\nu}+\left\Vert v'\right\Vert_{\nu}\left\Vert w\right\Vert_{\nu}\right)\\
&\phantom{\, 2a_2\left\Vert v\right\Vert_{\nu}\left\Vert v'\right\Vert_{\nu}+ b_2 \Big(} + \left\Vert \bar w\right\Vert_{\nu}\left(\left\Vert v\right\Vert_{\nu}\left\Vert p'\right\Vert_{\nu}+\left\Vert v'\right\Vert_{\nu}\left\Vert p\right\Vert_{\nu}\right) \\
&\phantom{\, 2a_2\left\Vert v\right\Vert_{\nu}\left\Vert v'\right\Vert_{\nu}+ b_2 \Big(}   + \left\Vert \bar v\right\Vert_{\nu}\left(\left\Vert w\right\Vert_{\nu}\left\Vert p'\right\Vert_{\nu}+\left\Vert w'\right\Vert_{\nu}\left\Vert p\right\Vert_{\nu}\right)\Big).
\end{align*}
According to \eqref{eq:norm_B_Xnu} and by rearrangements of the several terms, it follows
\begin{align*}
\left\Vert AD^2F(\bar X)(X',X)\right\Vert_{\X_\nu} &\leq \alpha_{v,v'}\left\Vert v\right\Vert_{\nu}\left\Vert v'\right\Vert_{\nu} + \alpha_{w,w'}\left\Vert w\right\Vert_{\nu}\left\Vert w'\right\Vert_{\nu} + \alpha_{p,p'}\left\Vert p\right\Vert_{\nu}\left\Vert p'\right\Vert_{\nu} \\
&\quad + \alpha_{v,w'}\left(\left\Vert v\right\Vert_{\nu}\left\Vert w'\right\Vert_{\nu}+\left\Vert v'\right\Vert_{\nu}\left\Vert w\right\Vert_{\nu}\right) + \alpha_{v,p'}\left(\left\Vert v\right\Vert_{\nu}\left\Vert p'\right\Vert_{\nu}+\left\Vert v'\right\Vert_{\nu}\left\Vert p\right\Vert_{\nu}\right) \\
&\quad + \alpha_{w,p'}\left(\left\Vert w\right\Vert_{\nu}\left\Vert p'\right\Vert_{\nu}+\left\Vert w'\right\Vert_{\nu}\left\Vert p\right\Vert_{\nu}\right) + \alpha_{p,s'}\left(\left\Vert p\right\Vert_{\nu}\left\Vert s'\right\Vert_{\nu}+\left\Vert p'\right\Vert_{\nu}\left\Vert s\right\Vert_{\nu}\right) \\
&\leq \max\left[\alpha_{v,v'},\alpha_{w,w'},\alpha_{p,p'},\alpha_{v,w'},\alpha_{v,p'},\alpha_{w,p'},\alpha_{p,s'}\right]\\
&\quad \times \left(\left\Vert v\right\Vert_{\nu}+\left\Vert w\right\Vert_{\nu}+\left\Vert p\right\Vert_{\nu}+\left\Vert s\right\Vert_{\nu}\right) \left(\left\Vert v'\right\Vert_{\nu}+\left\Vert w'\right\Vert_{\nu}+\left\Vert p'\right\Vert_{\nu}+\left\Vert s'\right\Vert_{\nu}\right)\\
&\leq \max\left[\alpha_{v,v'},\alpha_{w,w'},\alpha_{p,p'},\alpha_{v,w'},\alpha_{v,p'},\alpha_{w,p'},\alpha_{p,s'}\right] r\left\Vert X\right\Vert_{\X_\nu}.
\end{align*}
The same procedure  applied to the higher order derivative gives
\begin{align*}
\left\Vert AD^3F(\bar X)(X',X',X)\right\Vert_{\X_\nu} &\leq  \max\left[\alpha_{v,w',p''},\alpha_{w,w',p''},\alpha_{w,p',p''},\alpha_{p,p',s''}\right] r^2 \left\Vert X\right\Vert_{\X_\nu},
\end{align*}
and
\begin{align*}
\left\Vert AD^4F(\bar X)(X',X',X',X)\right\Vert_{\X_\nu} &\leq  \alpha_{w,w',p'',p'''} r^3 \left\Vert X\right\Vert_{\X_\nu}.
\end{align*}
Combining the above estimates, it follows that
$$
rZ_2(r)\geq \left\Vert A\left(DF(\bar X+X')-DF(\bar X)\right)X\right\Vert _{\X_\nu}, \quad \forall~X\in B_{\X_\nu}(0,1),\  \forall~X'\in B_{\X_\nu}(0,r).
$$
\qed

Notice that the computation of $\Theta_A^{(i)}$ requires the computation of $\left\VERT A^{(i,j)}\right\VERT_\nu$. Contrarily to the situation in Section~\ref{sec:Z0}, the tail of $A^{(i,j)}$ is not zero, in case $i=j$. However, since it has a diagonal structure, we can still explicitly compute the operator norm of each block of $A$. For instance
\begin{align*}
\left\VERT A^{(v,v)}\right\VERT_\nu &= \sup_{j\geq 0}\frac{1}{\nu^{j}}\sum_{k\geq 0}|A^{(v,v)}(k,j)|\nu^{k}\\
&= \max\left[\max_{0\leq j< m}\frac{1}{\nu^{j}}\sum_{0\leq k <m}|A^{(v,v)}(k,j)|\nu^{k},\sup_{j\geq m}\frac{1}{\nu^{j}}|-(\pi j)^{-1}|\nu^{j}\right] \\
&= \max\left[\max_{0\leq j< m}\frac{1}{\nu^{j}}\sum_{0\leq k <m}|A^{(v,v)}(k,j)|\nu^{k},\frac{1}{\pi m}\right].
\end{align*}

\subsection{Existence of steady States: the radii polynomial}

We now collect all the ingredients required to prove the existence of steady states  into a unique proposition.
\begin{proposition}
\label{prop:radii_pol} 
For $\nu>1$, let the space $\X_\nu=\big( \ell^1_\nu(\R)\big)^4$ be  endowed with the norm \eqref{eq:normXnu} and let $F$, $\bar X$, $A$, $A^{\dag}$  be as defined in section~\ref{sec:fun_F} and section~\ref{sec:fixed_point}. 
Let the bounds $Y$, $Z_0$, $Z_1$ and $Z_2$ be defined in~\eqref{eq:Y_steady_states}, \eqref{eq:Z0_steady_states}, \eqref{eq:Z1_steady_states} and \eqref{eq:Z2_steady_states} respectively, and rigorously computed. 
 \begin{itemize}
 \item[i)] If there exists $r>0$ such that
\begin{equation*}
P(r)=Z_2(r)r^2 -\left(1-\left(Z_0+Z_1\right)\right)r +Y<0,
\end{equation*}
then there exists a unique zero of $F$ in $\B_{\X_\nu}(\bar X,r)$.
\item[ii)] Let the functions $\bar v(x)$, $\bar w(x)$ and $\bar u(x)$ be 
$$
\bar w(x)=\bar w_0+2\sum_{k= 1}^{m-1}\bar w_k\cos(k\pi x), \quad  \bar v(x)=\bar v_0+2\sum_{k= 1}^{m-1}\bar v_k\cos(k\pi x).
$$
\begin{equation*}
\bar u(x)=\bar w(w)\bar p(x)=(\bar p\ast\bar w)_0+2\sum_{k= 1}^{2m-2}(\bar p\ast\bar w)_k\cos(k\pi x),
\end{equation*}
If $P(r)<0$ and $\inf_{x\in[0,1]}\bar w(x)-r>0$ and  $\inf_{x\in[0,1]}\bar v(x)-r>0$, then 
there exists of a smooth solution $(u(x),v(x))$ to~\eqref{eq:steady_states} so that
$$
|v(x)-\bar v(x)|<r, \quad |u(x)-\bar u(x)|<\left(\left\Vert\bar w\right\Vert_\nu+\left\Vert\bar p\right\Vert_\nu\right)r+\frac{r^2}{4}, \qquad \forall x\in[0,1].
$$
\end{itemize}

\end{proposition}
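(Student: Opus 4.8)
The plan is to treat part~i) as essentially a corollary of Theorem~\ref{th:radii_pol}, whose four hypotheses \eqref{def:Y}--\eqref{def:Z_2} are exactly what the four preceding lemmas established for the bounds $Y$, $Z_0$, $Z_1$ and $Z_2(r)$ defined in \eqref{eq:Y_steady_states}, \eqref{eq:Z0_steady_states}, \eqref{eq:Z1_steady_states} and \eqref{eq:Z2_steady_states}. Thus if $P(r)<0$ for some $r>0$, the theorem immediately yields a unique fixed point of $T=I-AF$ in $\B_{\X_\nu}(\bar X,r)$, and the only thing left to upgrade this into a unique \emph{zero of $F$} is the injectivity of $A$. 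Here is where I would use $P(r)<0$: since $P(0)=Y\ge 0$, $Z_2(r)\ge 0$ and $Y\ge 0$, a negative value of $P$ at some $r>0$ forces the linear coefficient to be negative, i.e. $Z_0+Z_1<1$, hence $Z_0<1$. Then $\VERT I-AA^{\dag}\VERT_{\X_\nu}\le Z_0<1$ makes $AA^{\dag}$ invertible by a Neumann series. Finally I would exploit that, by their very construction in Section~\ref{sec:fixed_point}, both $A$ and $A^{\dag}$ are block diagonal with respect to the splitting $\R^{4m}\oplus(\text{tail})$ (the tails of $A$ and $A^{\dag}$ being exact inverses), so $AA^{\dag}$ has finite block $\hat A^m D\hat F^m(\bar X)$ and identity tail; its invertibility forces $\hat A^m$ to be invertible, and together with the injective diagonal tail this gives injectivity of $A$.

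For part~ii) I would first apply part~i) to produce a zero $X=(v,w,p,s)$ of $F$ inside the ball, and I would strengthen the membership to the \emph{open} ball: the inequality $P(r)<0$ is equivalent to $(Z_0+Z_1+rZ_2(r))r+Y<r$, which says precisely that $T$ maps the closed ball $\B_{\X_\nu}(\bar X,r)$ strictly into its interior, so the fixed point satisfies $\left\Vert X-\bar X\right\Vert_{\X_\nu}<r$. The bridge from sequences to functions is the elementary estimate $\left\Vert a\right\Vert_\infty\le\left\Vert a\right\Vert_\nu$, valid for any $a\in\ell^1_\nu$ with $\nu>1$ because $\nu^k\ge 1$; applied componentwise and combined with \eqref{eq:normXnu} it gives $\sup_x|v(x)-\bar v(x)|\le\left\Vert v-\bar v\right\Vert_\nu<r$ and similarly $\sup_x|w(x)-\bar w(x)|<r$. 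This is simultaneously the claimed bound on $v$ and the input needed for positivity.

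Positivity then follows at once: $v(x)>\bar v(x)-r\ge\inf_{[0,1]}\bar v-r>0$ and $w(x)>\inf_{[0,1]}\bar w-r>0$ under the stated hypotheses, so Lemma~\ref{lem:rigorous_justification} applies and certifies that $u=pw$ and $v$ are smooth positive functions solving~\eqref{eq:steady_states}. For the bound on $u$ I would write, with $\delta p=p-\bar p$ and $\delta w=w-\bar w$,
\begin{equation*}
u-\bar u=pw-\bar p\bar w=\bar p\,\delta w+\bar w\,\delta p+\delta p\,\delta w,
\end{equation*}
pass to the sup norm, dominate every factor by its $\nu$-norm, and use $\left\Vert\delta w\right\Vert_\nu<r$ and $\left\Vert\delta p\right\Vert_\nu<r$ together with the arithmetic--geometric inequality $\left\Vert\delta p\right\Vert_\nu\left\Vert\delta w\right\Vert_\nu\le\big(\tfrac{\left\Vert\delta p\right\Vert_\nu+\left\Vert\delta w\right\Vert_\nu}{2}\big)^2<\tfrac{r^2}{4}$ to control the quadratic term. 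This yields exactly $|u(x)-\bar u(x)|<(\left\Vert\bar w\right\Vert_\nu+\left\Vert\bar p\right\Vert_\nu)r+\tfrac{r^2}{4}$.

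I expect the genuinely non-routine step to be the injectivity argument in part~i), since the passage from ``$AA^{\dag}$ invertible'' to ``$A$ injective'' is false for general operators and really relies on the specific block-diagonal design of $A$ and $A^{\dag}$; everything else is careful but standard norm bookkeeping, the only other point needing attention being the strict interior membership used to obtain the strict inequalities in the conclusion.
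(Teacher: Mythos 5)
Your proposal is correct and follows essentially the same route as the paper's: part~i) is Theorem~\ref{th:radii_pol} combined with the observation that $P(r)<0$ forces $Z_0<1$, so that $\left\VERT I-AA^{\dag}\right\VERT_{\X_\nu}<1$, and then the block structure of $A$ and $A^{\dag}$ (invertible finite block $\hat A^m$ plus nonzero diagonal tail) gives injectivity of $A$ — the paper phrases this via $\left\VERT I-AA^{\dag}\right\VERT_{\X_\nu}=\left\VERT \hat I^m-\hat A^m \hat{A^{\dag}}^m\right\VERT_{\X_\nu}$ rather than a Neumann series, but the content is identical — while part~ii) uses the same inequality $\sup_x|a(x)|\le\left\Vert a\right\Vert_\nu$, Lemma~\ref{lem:rigorous_justification}, and the same three-term expansion of $u-\bar u$ with the arithmetic--geometric bound on the quadratic term (which the paper delegates to Section~\ref{sec:c_j}). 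Your explicit remark that $P(r)<0$ makes $T$ map the closed ball into its interior, so the fixed point lies in the open ball and the stated inequalities are strict, is a small point the paper leaves implicit.
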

\begin{proof}
$i)$ The definition of the bounds  implies that the assumptions~\eqref{def:Y}-\eqref{def:Z_2} are satisfied. Theorem~\ref{th:radii_pol} then yields the existence and uniqueness of a zero for $F$ in $\B_{\X_\nu}(\bar X,r)$. The injectivity of $A$ follows for free from the fact that $P(r)<0$. Indeed it is necessary that $Z_0<1$ which means $\left\VERT I-AA^{\dag} \right\VERT_\X<1$. Note that the tail parts of $A$ and $A^\dag$ are analytically defined in a way that $1> \left\VERT I-AA^{\dag} \right\VERT_\X=\left\VERT \hat I^m-\hat A^m \hat{A^{\dag}}^m \right\VERT_\X$. The latter implies that both  $\hat A^m$ and $\hat{A^{\dag}}^m$ are  invertible, and thus $A$ is injective because its diagonal tail is made of non-zero coefficients.

$ii)$ Let $X=(v,w,p,s)$ be the unique zero of $F$ in   $\B_{\X_\nu}(\bar X,r)$. Thus $\|v-\bar v\|_\nu\leq r$ and $\|w-\bar w\|_\nu<r$. Since for any $a\in \ell^1_\nu$ we have that  $\|a\|_\nu\geq \sup_x|a(x)|$, it follows that $v(x)\geq \inf_x \bar v(x)-r>0$. The same holds for $w(x)$. For Lemma~\ref{lem:rigorous_justification} it follows  the existence of a smooth solution to~\eqref{eq:steady_states}. The error bound between $u$ and $\bar u$ is proven in Section~\ref{sec:c_j}.
\end{proof}

\section{Results about the existence of steady states}
\label{sec:results_steady_states}

In this section we present the computer-assisted proof of existence of steady states solutions stated in Theorem~\ref{th:exists_13}.
 
Each solution that is represented on Figure~\ref{fig:proved_diagram} was validated using the procedure described at the end of Section~\ref{sec:validated_numerics}. In particular, we computed each solution numerically, implemented the bounds described in Section~\ref{sec:bounds_steady_states}, and then \emph{successfully} applied Proposition~\ref{prop:radii_pol} to validate the numerical solution. By successfully we mean that we found a positive $r$ such that $P(r)<0$ and checked that $\inf_{x\in[0,1]}\bar w(x)-r>0$ and  $\inf_{x\in[0,1]}\bar v(x)-r>0$ (with the notation of Proposition~\ref{prop:radii_pol}).
The numerical data as well as the {\it Matlab} codes to perform the proofs and some documentation are available  at~\cite{website}. To make the computation of the bounds rigorous by controlling round-off errors, the interval arithmetic package {\it Intlab} \cite{Rum99} has been used. 
The computations presented here have been run on a laptop with a processor Intel Core i7 (2.50Ghz) and 8GB of RAM.

\medskip

\noindent{\it Proof of Theorem~\ref{th:exists_13}.}
In the script  \verb+script_proof_branch_steadystates.m+ fix the values of the parameters $r_1=5$, $r_2=2$, $a_1=3$, $a_2=3$, $b_1=1$, $b_2=1$, $d_{12}=3$. The parameter $d_1=d_2=d$ is intended as the bifurcation parameter. Choose a value for the finite dimensional projection $m$ and a value for the norm weight $\nu>1$. Also select  a branch of solutions (for the names of the several branches we refer to the documentation and the \verb+readme+ file). The script loads the numerical data, computes the required bounds and verifies the existence of an interval $\mathcal I=(r_1,r_2)$  such that $P(r)<0$ for any $r\in \mathcal I$. If $\mathcal I$ is not empty then the conditions $\inf_{x\in[0,1]}\bar w(x)-r>0$ and  $\inf_{x\in[0,1]}\bar v(x)-r>0$ are checked. In case of successful computation, Proposition~\ref{prop:radii_pol} implies the existence of the solutions.
The values for $m$ and $\nu$ that allow the rigorous computation of all the branches depicted in the Figure~\ref{fig:proved_diagram} are available in the documentation. 

The script \verb+script_proof_steadystate_and_instability.m+ concerns the existence of steady states for a fixed value of $d$. It is used to prove the existence of 13 solutions at values $d=0.005$. 
 Figure~\ref{fig:13solutions} shows the numerical data for the 13 steady states solutions. In Table~\ref{table:data_steadystates} we detail the values for  $m$ and  $\nu$ used in the proof and the resulting  validation radius $r$ (the script also aims at computing unstable eigenvalues, see Section~\ref{sec:results_instability}). 

\begin{figure}[h!]
\vspace{-0.5cm}
\centering
\subfigure[$v(0)=0.3973$]{\includegraphics[width=4cm]{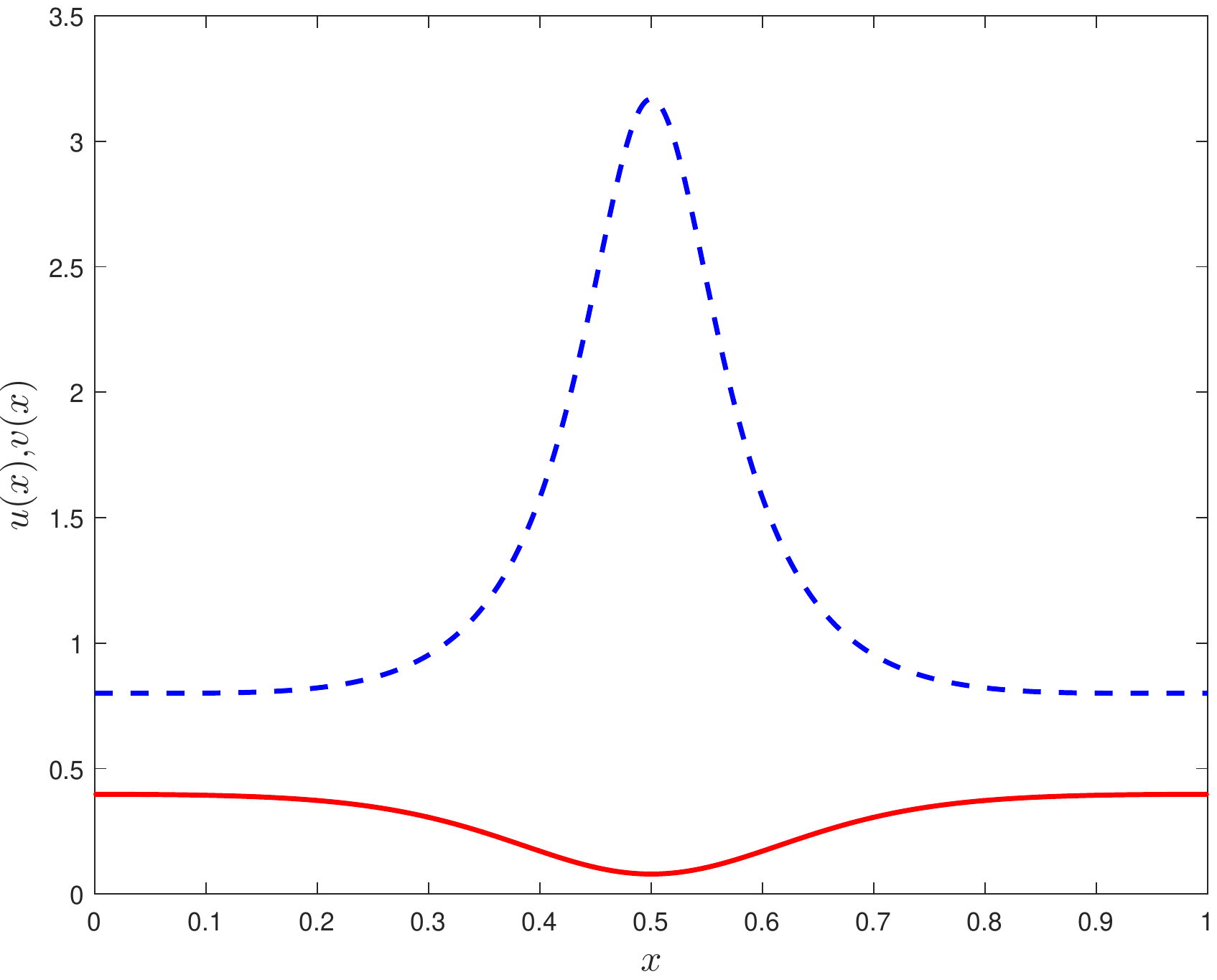}}
\subfigure[$v(0)=0.3632$]{\includegraphics[width=4cm]{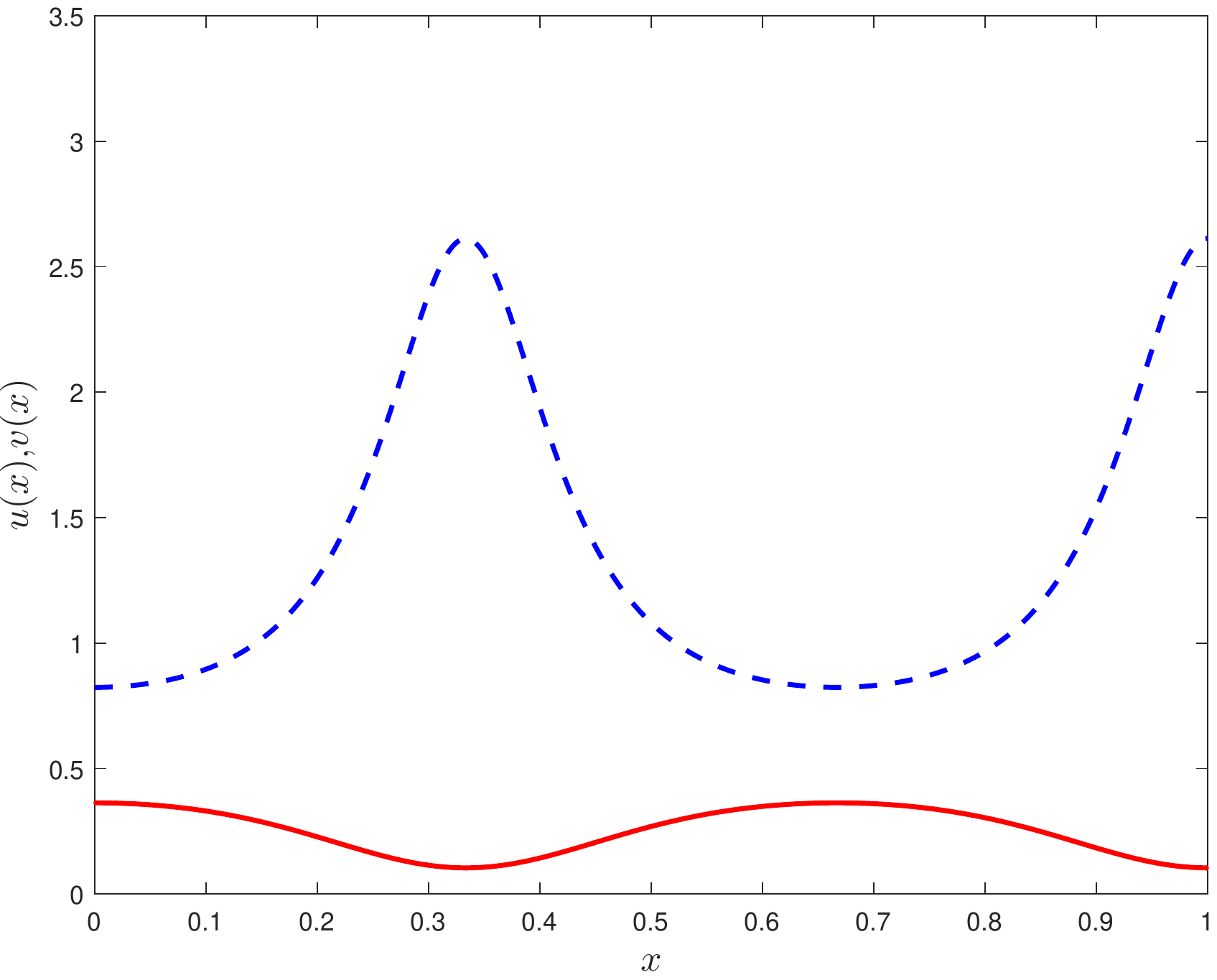}} 
\subfigure[$v(0)=0.3284$]{\includegraphics[width=4cm]{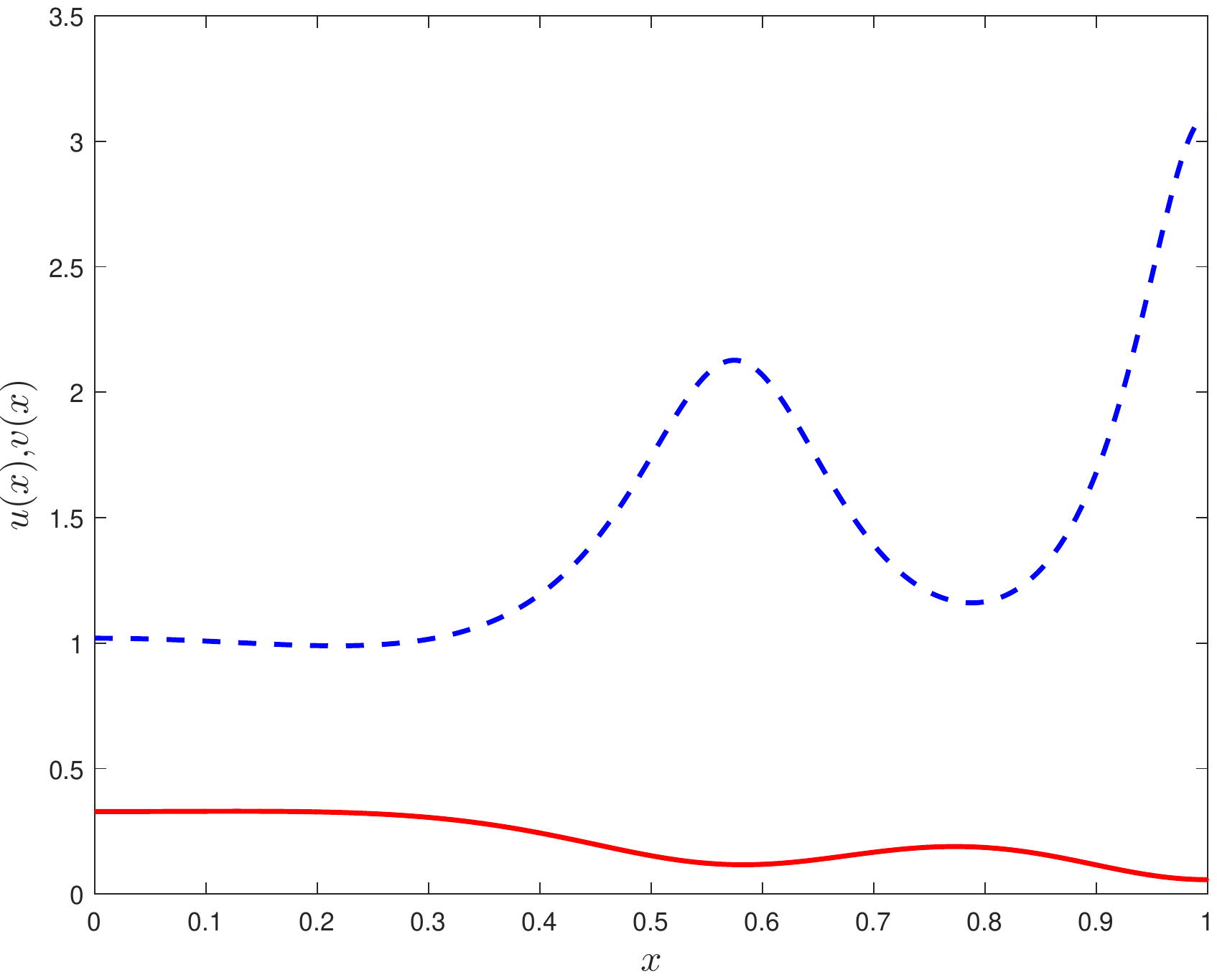}}
\subfigure[$v(0)=0.2721$]{\includegraphics[width=4cm]{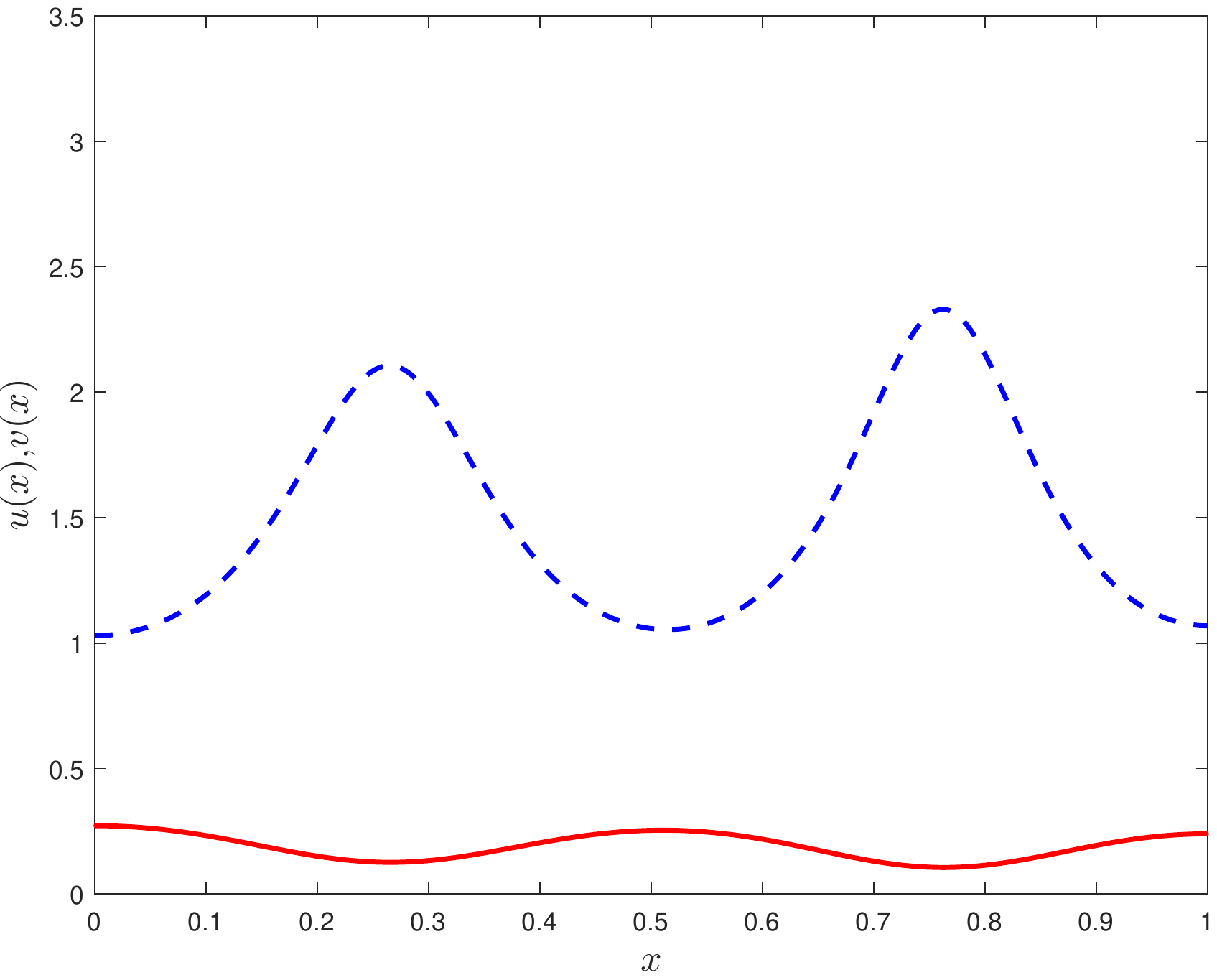}} 
\subfigure[$v(0)=0.2565$]{\includegraphics[width=4cm]{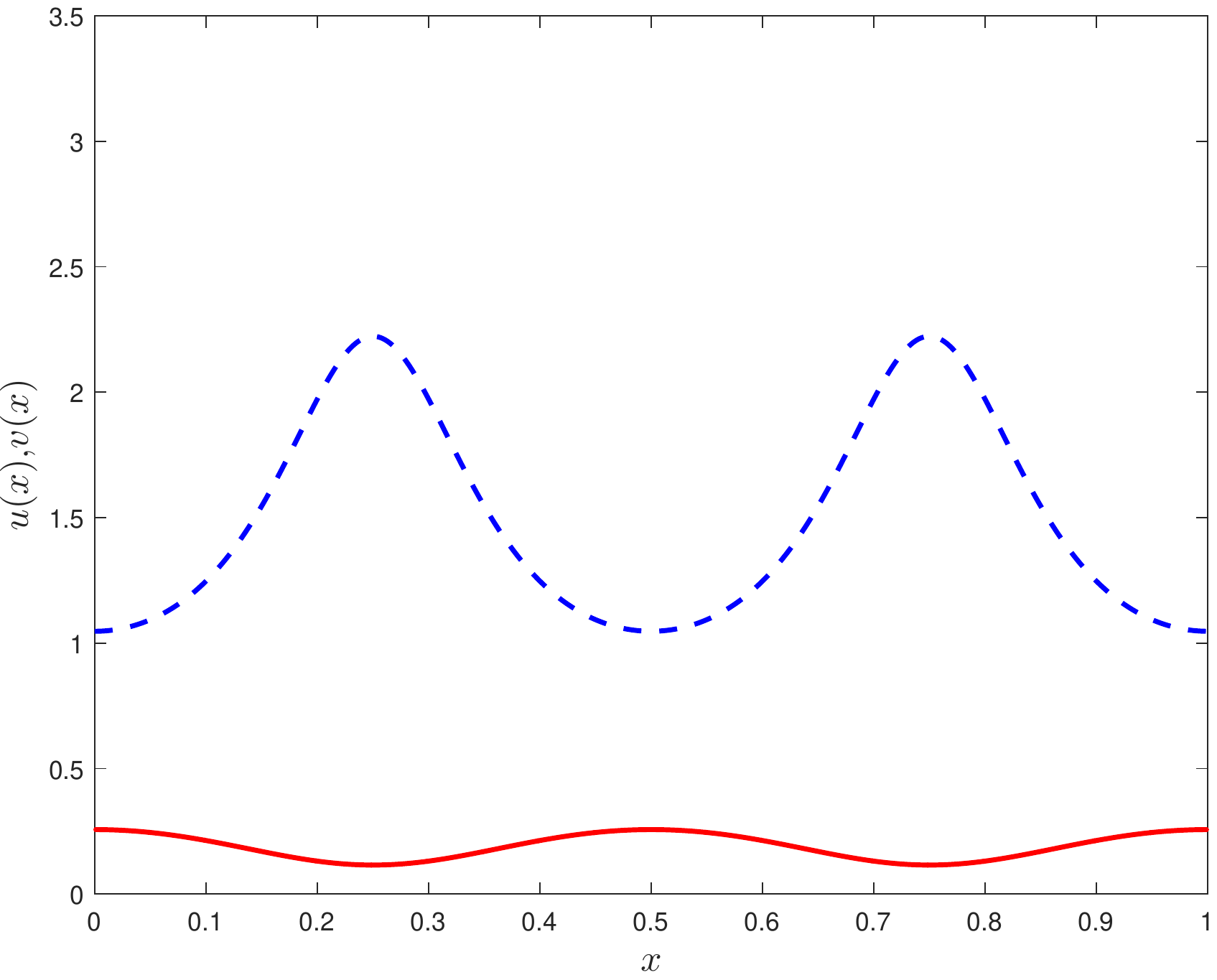}}
\subfigure[$v(0)=0.2397$]{\includegraphics[width=4cm]{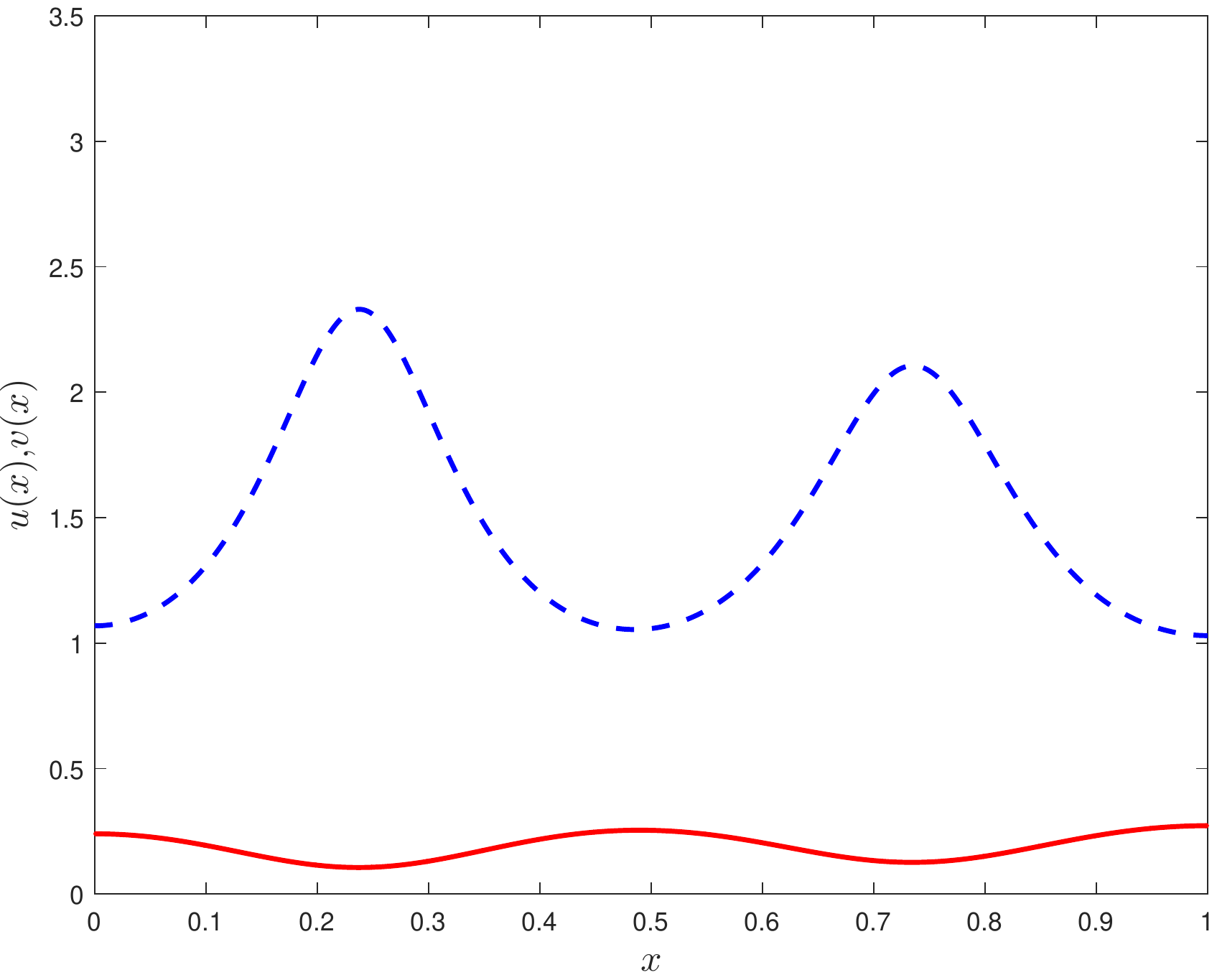}} 
\subfigure[$v(0)=0.1357$]{\includegraphics[width=4cm]{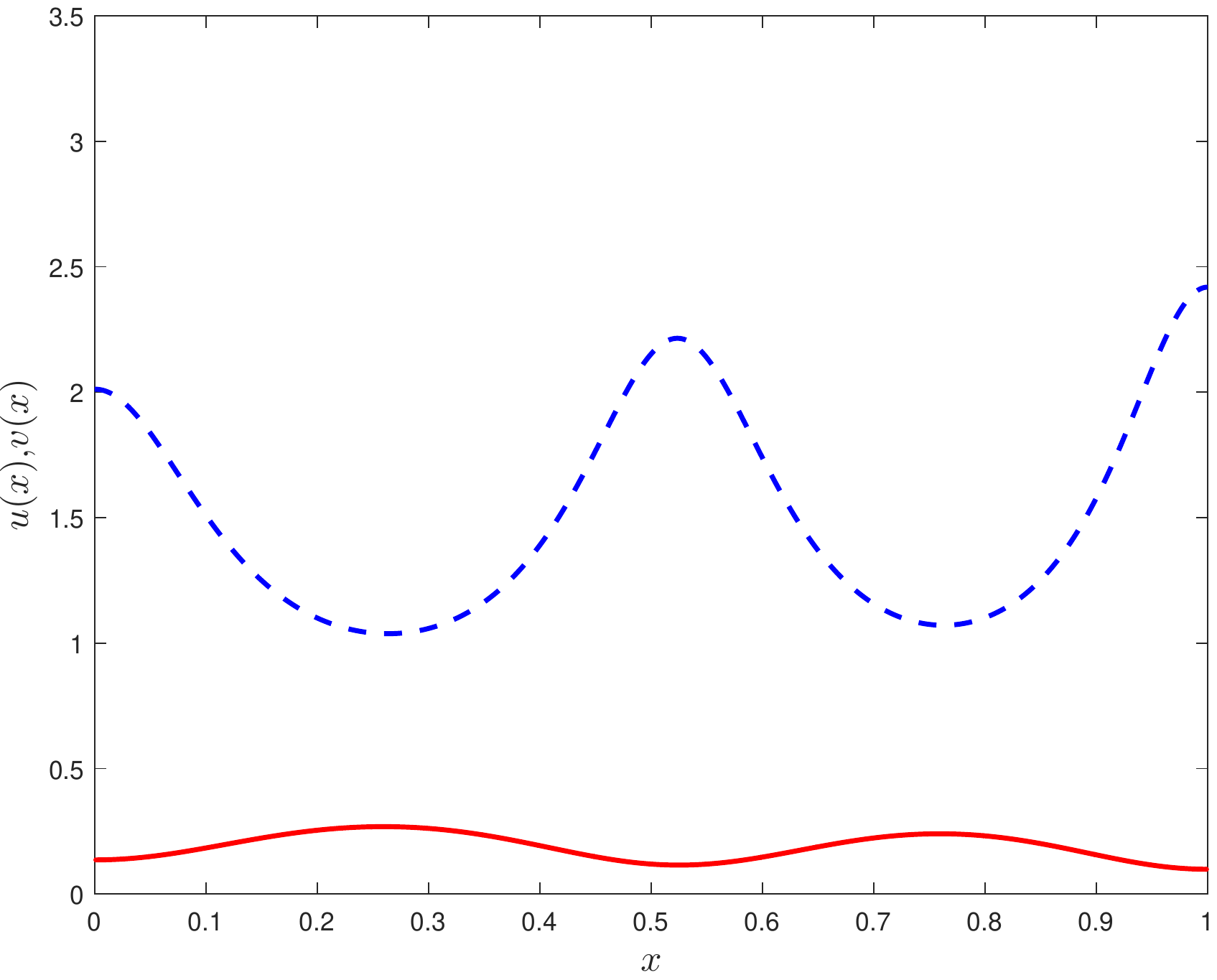}}
\subfigure[$v(0)=0.1250$]{\includegraphics[width=4cm]{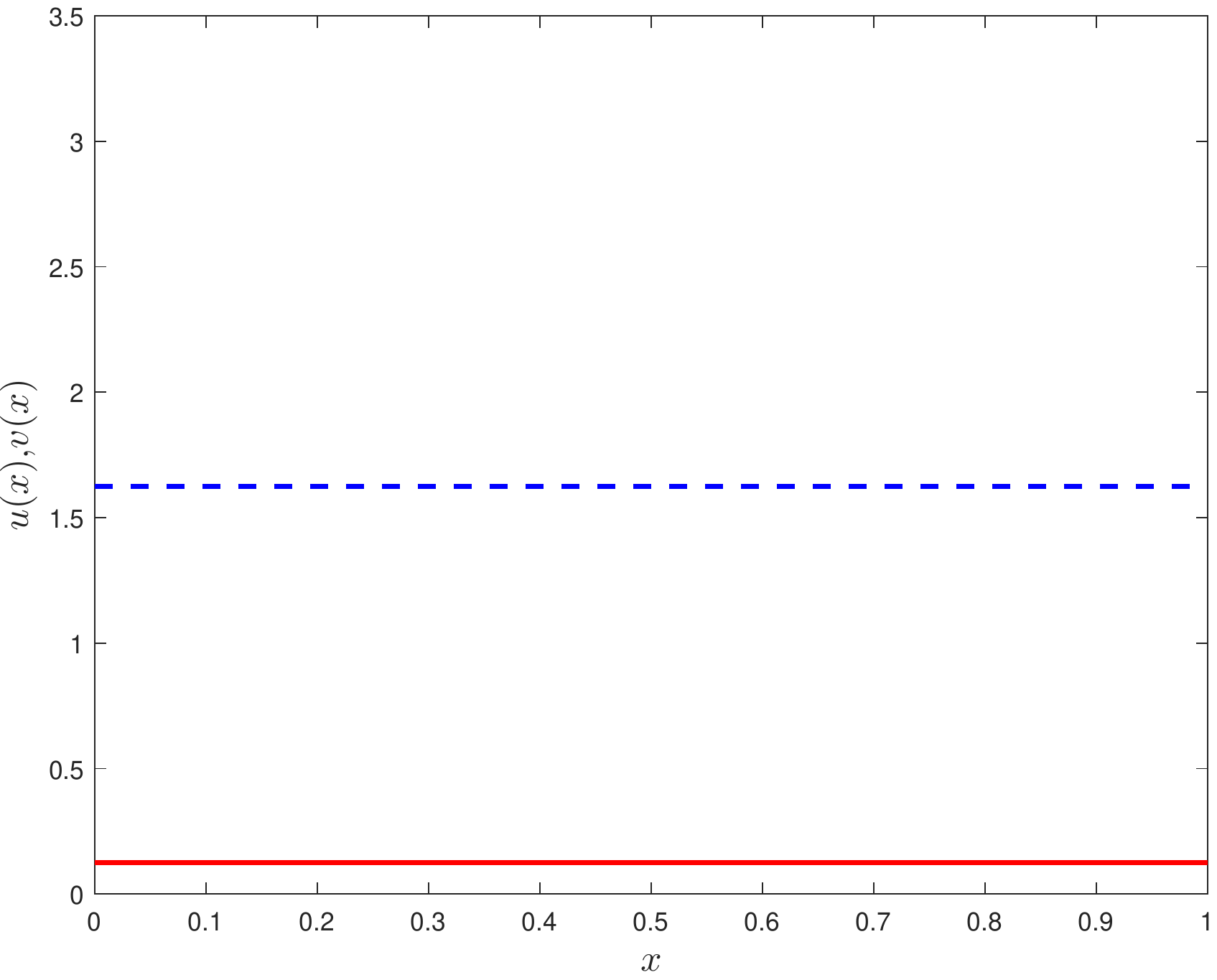}} 
\subfigure[$v(0)=0.11523$]{\includegraphics[width=4cm]{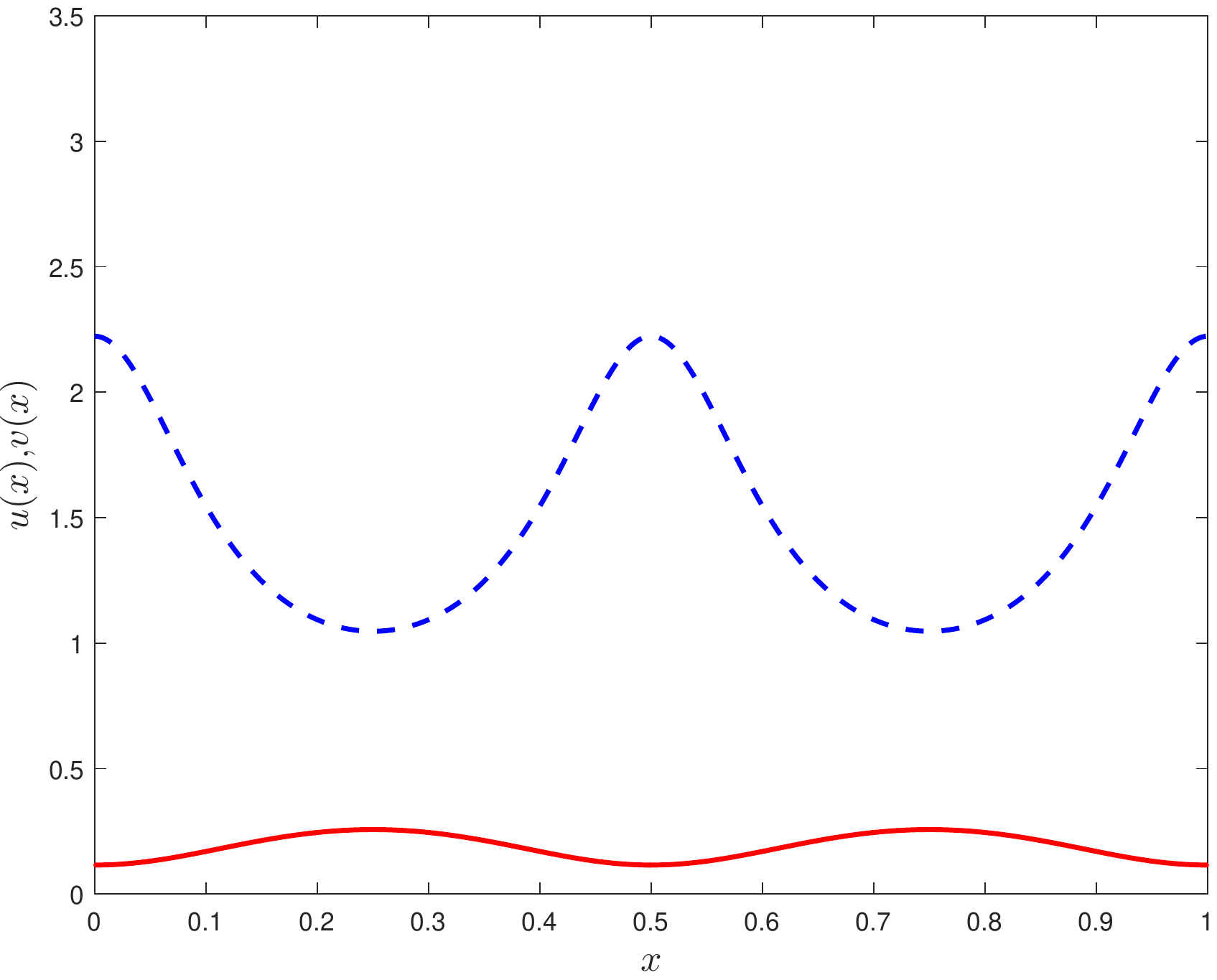}}
\subfigure[$v(0)=0.1040$]{\includegraphics[width=4cm]{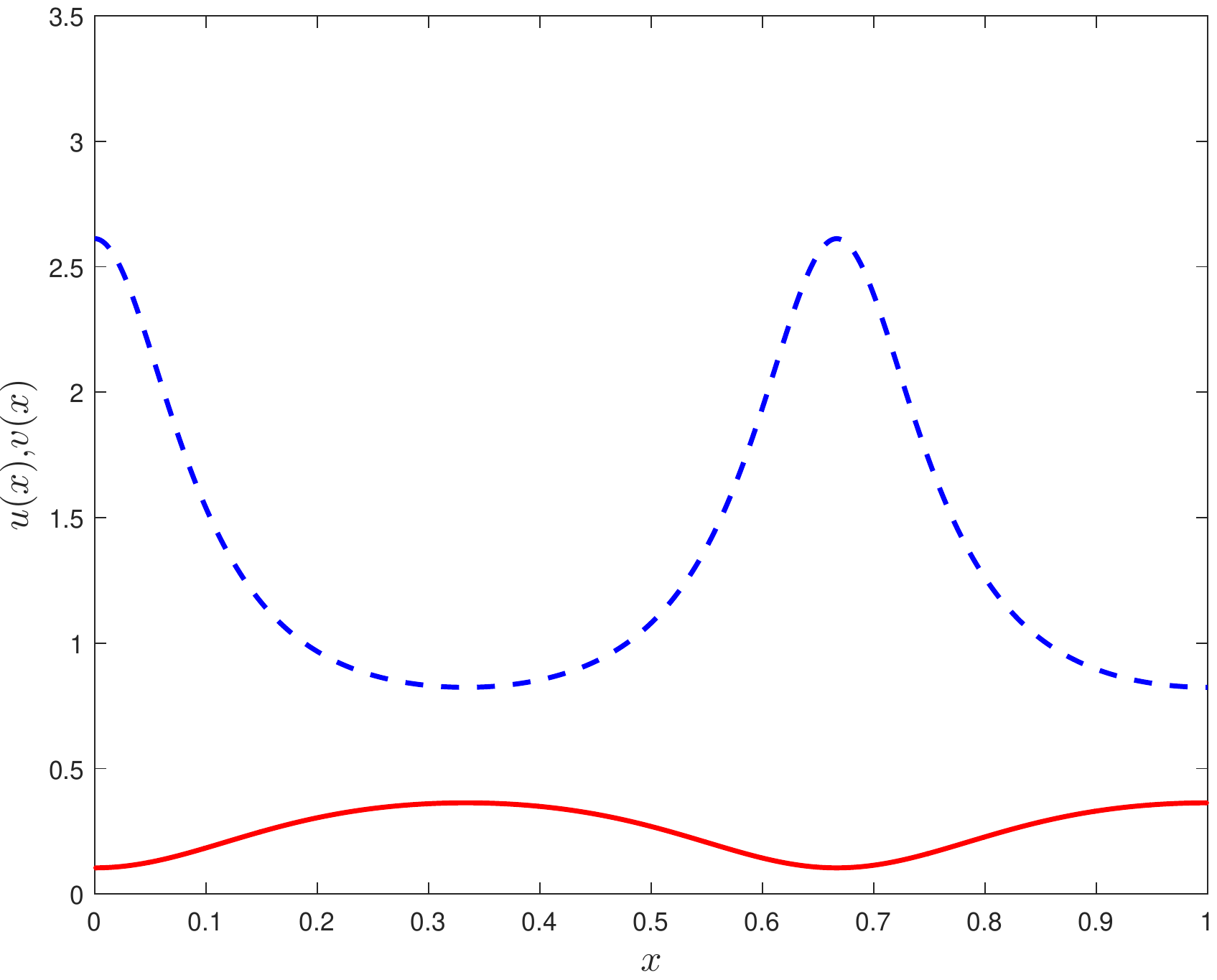}} 
\subfigure[$v(0)=0.0984$]{\includegraphics[width=4cm]{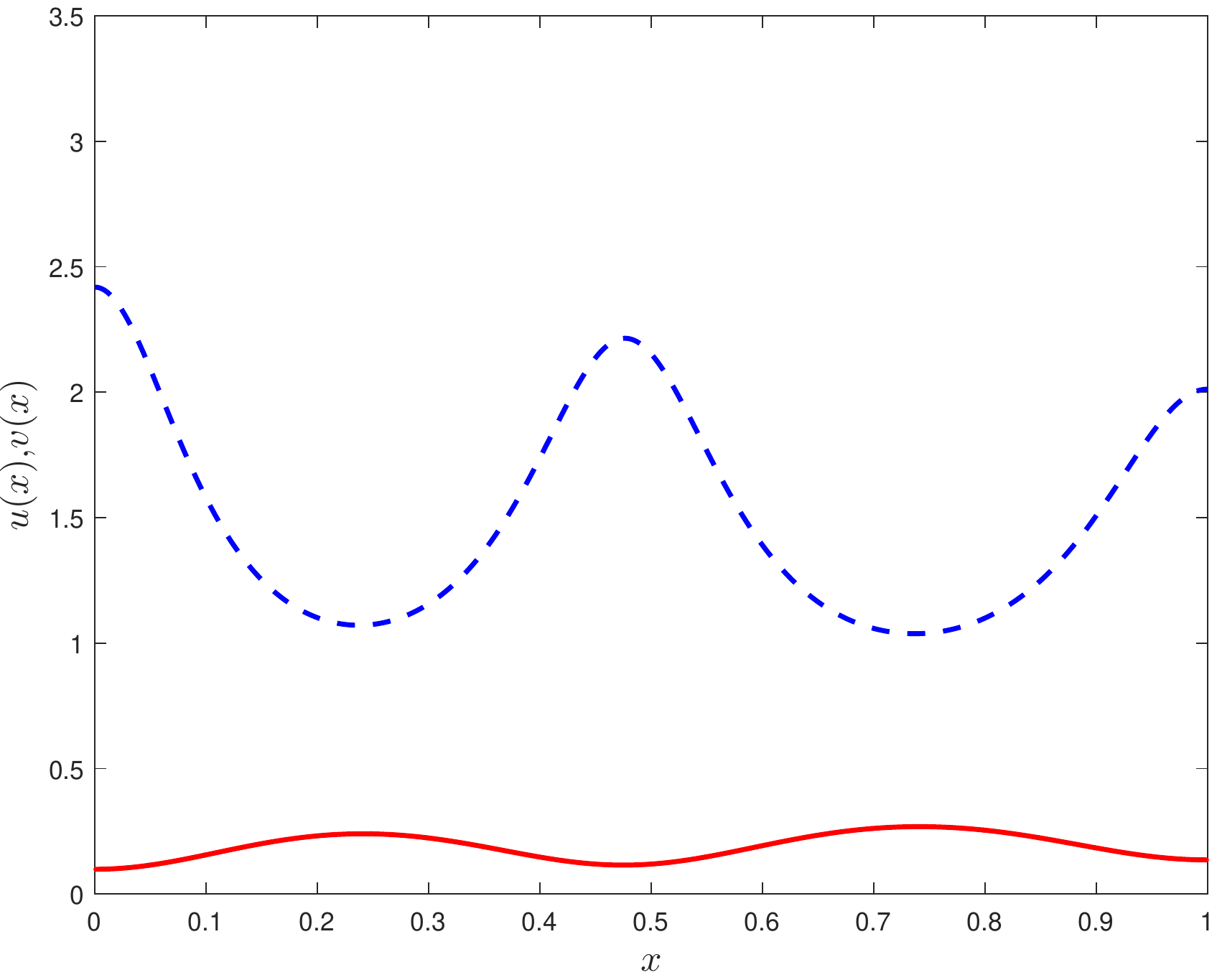}}
\subfigure[$v(0)=0.0793$]{\includegraphics[width=4cm]{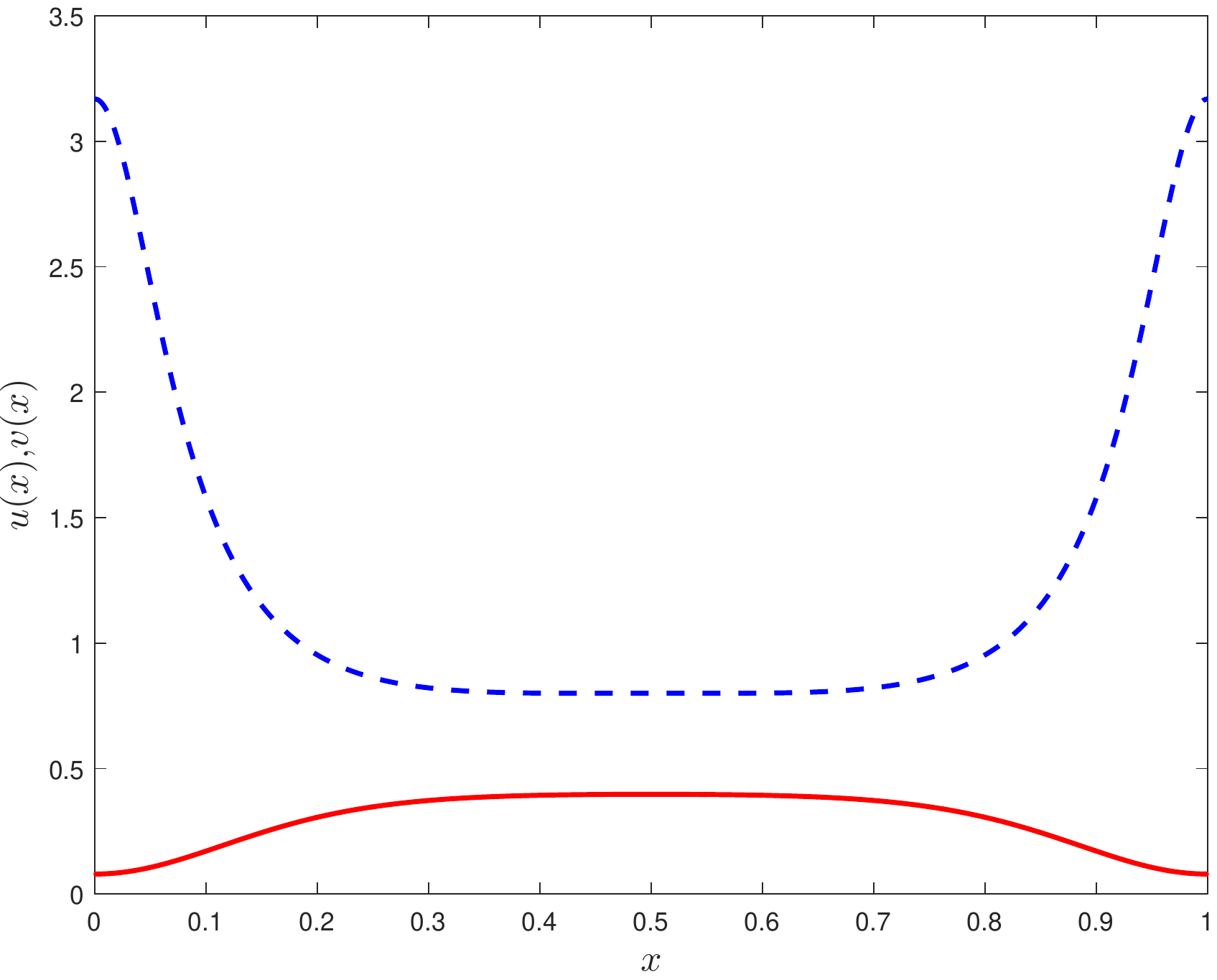}} 
\subfigure[$v(0)=0.0569$]{\includegraphics[width=4cm]{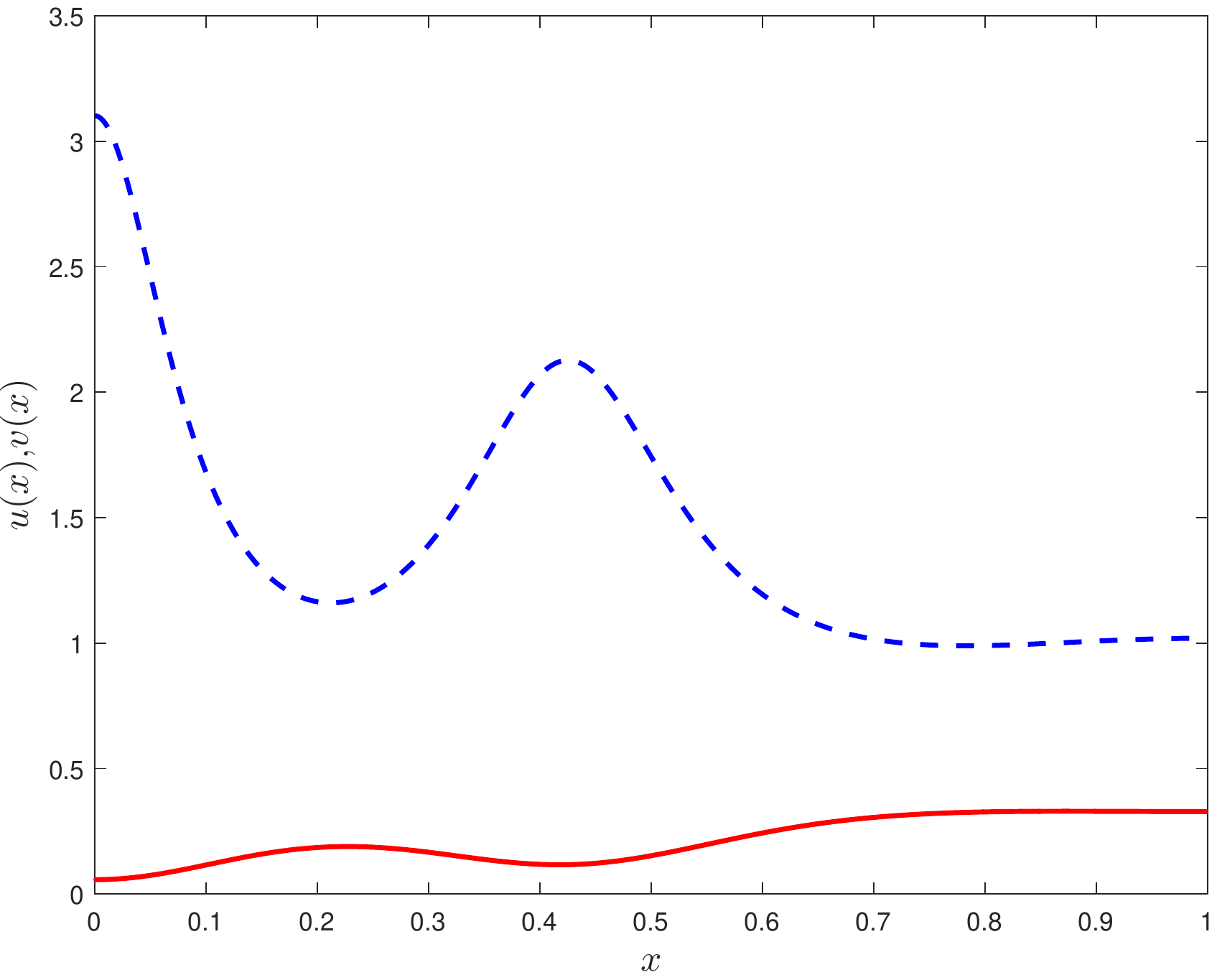}}
\vspace{-0.1cm}
\caption{The $13$ solutions announced in Theorem~\ref{th:exists_13} for $d=0.005$. They can be replaced on the bifurcation diagram using the value of $v(0)$. $u$ is represented in dashed blue, and $v$ in red. We give additional information about the proof for each of these solutions in Table~\ref{table:data_steadystates}, and discuss their stability in Section~\ref{sec:results_instability}.}
\label{fig:13solutions}
\end{figure}

\renewcommand{\arraystretch}{1.2}
\begin{table}[ht!]
\centering
\begin{tabular}{|c|c|c|c|}
\hline
Label for the solution (see Figure~\ref{fig:13solutions}) & $m$ used for the proof & $\nu$ used for the proof & Validation radius \\
\hline
(a) & 500 & 1.06 & $2.5968\times 10^{-11}$ \\
\hline
(b) & 500 & 1.06 & $9.8961\times 10^{-12}$ \\
\hline
(c) & 500 & 1.06 & $7.2076\times 10^{-11}$ \\
\hline
(d) & 500 & 1.06 & $7.8228\times 10^{-11}$ \\
\hline
(e) & 500 & 1.06 & $5.7165\times 10^{-12}$ \\
\hline
(f) & 500 & 1.06 & $1.0104\times 10^{-10}$ \\
\hline
(g) & 700 & 1.055 & $7.7146\times 10^{-11}$ \\
\hline
(h) & 500 & 1.06 & $2.9001\times 10^{-12}$ \\
\hline
(i) & 500 & 1.06 & $5.0578\times 10^{-12}$ \\
\hline
(j) & 500 & 1.06 & $6.4651\times 10^{-12}$ \\
\hline
(k) & 700 & 1.055 & $8.1462\times 10^{-11}$ \\
\hline
(l) & 500 & 1.06 & $1.6680\times 10^{-11}$ \\
\hline
(m) & 500 & 1.06 & $4.2505\times 10^{-11}$ \\
\hline
\end{tabular}
\caption{For each solution displayed in Figure~\ref{fig:13solutions}, we give the dimension $m$ that was used for the finite dimensional projection, the weight $\nu$ that was chosen for the space $\X_\nu$, and a validation radius $r$ for which the proof is successful, with those parameters $m$ and $\nu$.}
\label{table:data_steadystates}
\end{table}
\renewcommand{\arraystretch}{1}

\section{Framework for the instability of steady states}
\label{sec:framework_instability}

In this section, we focus on the stability of the steady states whose existence we proved in the Sections~\ref{sec:framework_steady_states} to \ref{sec:results_steady_states}. More precisely, we consider  
the cross-diffusion system~\eqref{eq:with_crossdiff} in the triangular case $d_{21}=0$ and with space dimension one ($\Omega=(0,1)$),
\begin{equation}
\label{eq:evolution}
\left\{
\begin{aligned}
&\frac{\partial u}{\partial t} = \frac{\partial^2}{\partial x^2}\left((d_1+d_{12} v)u\right)+(r_1-a_1u-b_1v)u, \quad &\text{on }\R_+\times(0,1),\\
&\frac{\partial v}{\partial t} = d_2\frac{\partial^2 v}{\partial x^2}+(r_2-b_2u-a_2 v)v, \quad &\text{on }\R_+\times(0,1),\\
&\frac{\partial u}{\partial x}(t,0)=0=\frac{\partial u}{\partial x}(t,1), \quad &\text{on }\R_+, \\
&\frac{\partial v}{\partial x}(t,0)=0=\frac{\partial v}{\partial x}(t,1), \quad &\text{on }\R_+.
\end{aligned}
\right.
\end{equation}
Let  $(u,v)=(u(x),v(x))$ be a steady state of~\eqref{eq:evolution}.
The linearization  of~\eqref{eq:evolution}, around the steady state $(u,v)$ yields the  eigenvalue problem
\begin{equation}
\label{eq:expanded_eigen}
\left\{
\begin{aligned}
&d_1\xi''+d_{12}\left(v''\xi+2v'\xi'+v\xi''+\eta'' u+2\eta' u' +\eta u''\right)+r_1\xi-2a_1u\xi-b_1(v\xi+\eta u)=\lambda \xi, \quad &\text{on }(0,1),\\
&d_2\eta''+r_2\eta-b_2(u\eta+\xi v)-2a_2 v\eta=\lambda \eta, \quad &\text{on }(0,1),\\
&\xi'(0)=\xi'(1)=0, \\
&\eta'(0)=\eta'(1)=0,
\end{aligned}
\right.
\end{equation}
where the functions $(\xi,\eta)=(\xi(x),\eta(x))$ form the  eigenfunction and $\lambda$ is the eigenvalue. As announced in the introduction, we aim at proving that most of the steady states obtained in Section~\ref{sec:results_steady_states} are  unstable, by showing that  the eigenproblem~\eqref{eq:expanded_eigen} admits an unstable eigenvalue, i.e. there exists a solution $\left((\xi,\eta),\lambda\right)$ of~\eqref{eq:expanded_eigen} such that $\Re(\lambda)>0$. The  approach is similar to the one used in Section~\ref{sec:framework_steady_states}: we first reformulate~\eqref{eq:expanded_eigen} into an equivalent problem more amenable to validated numerics, and then use Theorem~\ref{th:radii_pol} to prove the existence of an unstable eigenvalue. For this, we again follow the algorithm  outlined at the end of Section \ref{sec:validated_numerics}.

\subsection{Proof of instability: the function $\bm{F}$}\label{sec:F2}

As we did for the steady states, we look for the eigenfunctions $(\xi,\eta)$ as cosine series.  We point out that the steady state $(u,v)$ in~\eqref{eq:expanded_eigen} are non constant functions which have been obtained in Sections~\ref{sec:framework_steady_states} and~\ref{sec:results_steady_states}. If we directly expand~\eqref{eq:expanded_eigen} on the  Fourier basis, the dominant terms would not be diagonal. We take care of this issue by transforming~\eqref{eq:expanded_eigen} into an equivalent \emph{generalized eigenvalue problem} which has an autonomous second order terms. 

The two equations in~\eqref{eq:expanded_eigen} can be rewritten as
\renewcommand{\arraystretch}{1.2}
\begin{equation}
\label{eq:eigen_matrix}
M_1\begin{pmatrix}
\xi'' \\ \eta ''
\end{pmatrix}
+M_2\begin{pmatrix}
\xi' \\ \eta'
\end{pmatrix}
+M_3\begin{pmatrix}
\xi \\ \eta
\end{pmatrix}
=
\lambda\begin{pmatrix}
\xi \\ \eta
\end{pmatrix},
\end{equation}
where
\begin{equation*}
M_1=\begin{pmatrix}
d_1+d_{12}v & d_{12}u \\ 
0 & d_2
\end{pmatrix},\ 
M_2=\begin{pmatrix}
2d_{12} v' & 2d_{12} u'\\
0 & 0 
\end{pmatrix},\ 
M_3=\begin{pmatrix}
d_{12} v''+r_1-2a_1u-b_1v & d_{12} u''-b_1u\\
-b_2v & r_2-b_2u-2a_2 v 
\end{pmatrix}.
\end{equation*}
Introducing $p=\frac{1}{d_1+d_{12}v}$ as in Section~\ref{sec:framework_steady_states}, and knowing that $p(x)>0$ for any $x\in[0,1]$, we can express the inverse of $M_1$ as:
\begin{equation*}
M_1^{-1}=\begin{pmatrix}
\frac{1}{d_1+d_{12}v} & -\frac{d_{12} u}{d_2(d_1+d_{12} v)}\\
0 & \frac{1}{d_2}
\end{pmatrix}
=\begin{pmatrix}
p & -\frac{d_{12}up}{d_2}\\
0 & \frac{1}{d_2}
\end{pmatrix}.
\end{equation*}
We multiply~\eqref{eq:eigen_matrix} by $M_1^{-1}$ and obtain the following equivalent formulation for~\eqref{eq:expanded_eigen}:
\begin{equation}
\label{eq:eigen_with_c}
\left\{
\begin{aligned}
&\xi'' + c_1\xi' + c_2\eta' + c_3\xi + c_4\eta + c_5\lambda\xi + c_6\lambda\eta = 0, \quad &\text{on }(0,1),\\
&\eta'' + c_7\xi + c_8\eta + c_9\lambda\eta = 0, \quad &\text{on }(0,1),\\
&\xi'(0)=\xi'(1)=0, \\
&\eta'(0)=\eta'(1)=0,
\end{aligned}
\right.
\end{equation}
where the functions $\left(c_j\right)_{1\leq j \leq 9}$ depend on the steady state $(u,v)$ (and on the parameters of the cross-diffusion system), and are given by
\begin{equation*}
c_1=2d_{12}pv',\quad c_2=2d_{12}pu',\quad c_3=(r_1-2a_1u-b_1v+d_{12}v'')p +\frac{d_{12} b_2}{d_2}uvp,
\end{equation*}
\begin{equation*}
c_4=(d_{12}u''-b_1u)p-\frac{d_{12}}{d}up(r_2-b_2u-2a_2v),\quad c_5=-p,\quad c_6=\frac{d_{12}}{d_2}up,
\end{equation*}
\begin{equation}
\label{eq:def_cj}
c_7=-\frac{b_2}{d_2}v,\quad c_8=\frac{1}{d_2}(r_2-b_2u-2a_2v),\quad c_9=-\frac{1}{d_2}.
\end{equation}

\subsubsection{The algebraic system in Fourier space and the $\bm{F=0}$ formulation}

Expanding the eigenfunctions in cosine series
\begin{equation*}
\xi(x)=\xi_0+2\sum_{k\geq 1}\xi_k\cos(k\pi x), \quad \eta(x)=\eta_0+2\sum_{k\geq 1}\eta_k\cos(k\pi x),
\end{equation*}
and inserting these expansions in~\eqref{eq:eigen_with_c}, we end up with
\begin{equation}
\label{eq:algebraic_eigen}
\left\{
\begin{aligned}
&-(\pi k)^2 \xi_k-(c_1\bullet K\xi)_k-(c_2\bullet K\eta)_k+(c_3\ast\xi)_k+(c_4\ast\eta)_k+\lambda(c_5\ast\xi)_k+\lambda(c_6\ast\eta)_k=0 ,\quad &\forall~k\in\N,\\
&-(\pi k)^2 \eta_k+(c_7\ast\xi)_k+(c_8\ast\eta)_k+\lambda(c_9\ast\eta)_k=0 ,\quad &\forall~k\in\N.
\end{aligned}
\right.
\end{equation}

Again, we identify the functions $\xi$, $\eta$ and $c_j$ with their sequence of Fourier coefficients. 

\begin{remark}
\textbf{Reset of some notations.} To maintain the same   notations as in  Theorem~\ref{th:radii_pol}, we are going to redefine the appropriate $\X$, $X$, $F$, $\bar X$, $A$ and $A^{\dag}$ corresponding to the eigenproblem~\eqref{eq:eigen_with_c}. Henceforth, we forget about the definition of this symbols that was given in Section~\ref{sec:framework_steady_states}, and give new ones in the sequel.
\end{remark}

For $\gamma>1$, we define $\X_\gamma=\left(\ell^1_\gamma\right)^2\times\C$ and denote by $X=(\xi,\eta,\lambda)$ any element in $\X_\gamma$. We point out that  in Section~\ref{sec:framework_steady_states} we could restrict ourselves to real sequences because we were only looking for real solutions, but this is no longer the case here since we may encounter complex conjugate eigenvalues and eigenvectors. We endow $\X_\gamma$ with the norm
\begin{equation*}
\left\Vert X\right\Vert_{\X_\gamma} = \left\Vert \xi\right\Vert_\gamma + \left\Vert \eta\right\Vert_\gamma + \left\vert \lambda\right\vert,
\end{equation*}
which makes it a Banach space. We then fix an index $k_0\in\N$ and define the function $F=(F^{(\xi)},F^{(\eta)},F^{(\lambda)})$ acting on $\X_{\gamma}$ by
\begin{align*}
\label{eq:def_F_eigen}
&F_k^{(\xi)}(X)=-(\pi k)^2 \xi_k-(c_1\bullet K\xi)_k-(c_2\bullet K\eta)_k+(c_3\ast\xi)_k+(c_4\ast\eta)_k+\lambda(c_5\ast\xi)_k+\lambda(c_6\ast\eta)_k, &\forall~k\in \N, \nonumber\\
&F_k^{(\eta)}(X)=-(\pi k)^2 \eta_k+(c_7\ast\xi)_k+(c_8\ast\eta)_k+\lambda(c_9\ast\eta)_k, &\forall~k\in \N, \nonumber\\
&F^{(\lambda)}(X)=\xi_{k_0}-1.
\end{align*}
Notice that the only difference between $F(X)=0$ and system~\eqref{eq:algebraic_eigen} is the equation $\xi_{k_0}=1$. The role of this additional constraint is to normalise the eigenfunction and hence to isolate the potential solutions of $F$. Indeed, we can not hope to successfully use Theorem~\ref{th:radii_pol}, which is based on a contraction argument, if the zeros of $F$ are not isolated. We point out that many different conditions could have been added to isolate the solution, and that this specific choice is rather arbitrary.

We now state the precise link between $F$ and our stability problem.
\begin{lemma}
\label{lem:rigorous_eigen}
Assume that $(u,v)$ is a positive stationary solution of~\eqref{eq:evolution} and that there exists $\gamma>1$ such that the Fourier coefficients of the functions $\left(c_j\right)_{1\leq j\leq 9}$, defined in~\eqref{eq:def_cj}, belong to $\ell^1_\gamma(\R)$. Fix $k_0\in\N$ and suppose that there exists $X=(\xi,\eta,\lambda)\in\X_\gamma$, with $\Re(\lambda)>0$, such that $F(X)=0$. Then the steady state $(u,v)$ is linearly unstable.
\end{lemma}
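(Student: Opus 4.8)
The plan is to show that if $X=(\xi,\eta,\lambda)\in\X_\gamma$ is a zero of $F$ with $\Re(\lambda)>0$, then the functions $\xi(x)=\xi_0+2\sum_{k\geq 1}\xi_k\cos(k\pi x)$ and $\eta(x)=\eta_0+2\sum_{k\geq 1}\eta_k\cos(k\pi x)$ constitute a genuine eigenfunction of the linearized problem~\eqref{eq:expanded_eigen} with eigenvalue $\lambda$, and that this eigenvalue sits strictly in the right half-plane, which by definition yields linear instability of the steady state $(u,v)$. The argument is a regularity-plus-equivalence chain entirely analogous to Lemma~\ref{lem:rigorous_justification}, so I would model the proof on that one.

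\textbf{Step 1 (regularity and well-definedness).} Since $\xi,\eta\in\ell^1_\gamma$ with $\gamma>1$, their coefficients decay exponentially, so $\xi$ and $\eta$ are well-defined, smooth (indeed analytic) functions, and all the term-by-term derivatives $\xi',\xi'',\eta',\eta''$ appearing in~\eqref{eq:eigen_with_c} are legitimate. One also needs the hypothesis that the coefficients of each $c_j$ lie in $\ell^1_\gamma(\R)$, which guarantees (via Lemma~\ref{lem:convolution_algebra}, the Banach-algebra property) that every convolution product $c_j\ast\xi$, $c_j\ast\eta$ and every $c_j\bullet K\xi$ etc. again lies in $\ell^1_\gamma$, so that $F(X)$ is a well-defined element of the sequence space and the equation $F(X)=0$ makes sense coefficient-wise.

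\textbf{Step 2 (from $F(X)=0$ back to the PDE eigenproblem).} By construction, the vanishing of the $F^{(\xi)}_k$ and $F^{(\eta)}_k$ components is exactly the statement that the sequences solve the algebraic system~\eqref{eq:algebraic_eigen}. Here I would invoke the correspondence established in Section~\ref{sec:def}: $K$ acts as the Fourier symbol of differentiation, $\ast$ realizes the product of two cosine series, and $\bullet$ realizes the product where the first factor is differentiated to a sine series; hence projecting~\eqref{eq:algebraic_eigen} back onto the cosine basis recovers precisely the differential system~\eqref{eq:eigen_with_c}. The cosine expansion automatically enforces the Neumann boundary conditions $\xi'(0)=\xi'(1)=0$ and $\eta'(0)=\eta'(1)=0$ because every basis function $\cos(k\pi x)$ satisfies them. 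Finally, multiplying~\eqref{eq:eigen_with_c} back by $M_1$ (which is invertible since $p(x)>0$, as $(u,v)$ is a positive steady state) reverses the reduction that produced the $c_j$ and returns the original expanded eigenproblem~\eqref{eq:expanded_eigen}. Thus $(\xi,\eta)$ is an honest eigenfunction with eigenvalue $\lambda$.

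\textbf{Step 3 (nontriviality and conclusion).} The extra scalar equation $F^{(\lambda)}(X)=\xi_{k_0}-1=0$ forces $\xi_{k_0}=1$, so $\xi\not\equiv 0$ and the pair $(\xi,\eta)$ is a nontrivial eigenfunction; without this one could not rule out the trivial zero. Since we assumed $\Re(\lambda)>0$, the linearization about $(u,v)$ possesses an eigenvalue in the open right half-plane, which is exactly the definition of linear instability of the steady state, completing the proof. \textbf{The main subtlety}, rather than any hard estimate, is the careful bookkeeping of the three convolution products and the symbol $K$ when translating between Fourier space and physical space—one must check that the odd/even structure of each product (as catalogued in Section~\ref{sec:def}) matches the parity of the corresponding term in~\eqref{eq:eigen_with_c} so that the back-projection onto the cosine basis is valid; everything else is a direct transcription of the reasoning in Lemma~\ref{lem:rigorous_justification}.
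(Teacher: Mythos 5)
Your proposal is correct and follows essentially the same route as the paper, which likewise proves this lemma by observing (in analogy with Lemma~\ref{lem:rigorous_justification}) that the $\ell^1_\gamma$ regularity of $X$ and of the $c_j$ permits one to rigorously backtrack the manipulations leading from the eigenproblem~\eqref{eq:expanded_eigen} to $F$. Your added remarks on the invertibility of $M_1$, the parity bookkeeping for the convolution products, and the nontriviality of the eigenfunction enforced by $\xi_{k_0}=1$ are all consistent with, and merely flesh out, the paper's (terser) argument.
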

\begin{proof}
As for Lemma~\ref{lem:rigorous_justification}, the proof just consists in checking that the regularity (i.e. the fact that the Fourier coefficients belongs to $\ell^1_\gamma$) of the solution $X$ and of the data $\left(c_j\right)_{1\leq j\leq 9}$ allows to rigorously backtrack the manipulations made to obtain $F$ from the eigenproblem~\eqref{eq:expanded_eigen}. 
\end{proof}
We point out that the assumption that $u$ and $v$ are positive, is in fact only needed here to ensure that $p$ is well defined. Concerning the assumption on the functions $\left(c_j\right)_{1\leq j\leq 9}$, the method developed in Sections~\ref{sec:framework_steady_states} to \ref{sec:results_steady_states} naturally provides us with steady states $(u,v)$ for which the Fourier coefficients of $u$, $v$ (and $p$) belong to $\ell^1_\nu$, for some $\nu>1$. However, since some of the $c_j$ involve derivatives of $u$ and $v$, we can only get that their Fourier coefficients belong to $\ell^1_\gamma$ for $\gamma<\nu$. We give more details and explicit estimates right below.

\subsubsection{About the functions $\bm{c_j}$}
\label{sec:c_j}

The function $F$ depends on the Fourier coefficients of $\left(c_j\right)_{1\leq j\leq 9}$, which themselves depend on the steady state $(u,v)$. The method described in Sections~\ref{sec:framework_steady_states} to \ref{sec:results_steady_states} (in particular Proposition~\ref{prop:radii_pol}) provides us with an approximate steady state, in the form of Fourier sequences $(\bar v,\bar w,\bar p, \bar s)$, together with a validation radius $r_\nu$ which gives an upper bound of the distance (in the $\ell^1_\nu$ norm) between the approximate steady state and the genuine one. 

\begin{remark}
From now on, we denote   $r_\nu$ the validation radius obtained in the computation of  the steady states. This validation radius was simply denoted by $r$ in Section~\ref{sec:framework_steady_states} and~\ref{sec:results_steady_states}, but this new notation should avoid possible confusions with the validation radius that we are going to consider for the eigenvalue problem.
\end{remark}

More precisely, the steady state (in the $(v,w,p,s)$ coordinates) is proved to exist in the form
\begin{equation*}
v=\bar v+\varepsilon_v,\quad w=\bar w+\varepsilon_w, \quad p=\bar p+\varepsilon_p,\quad s=\bar s+\varepsilon_s,
\end{equation*}
where the $(\bar v,\bar w,\bar p, \bar s)$ are finite Fourier sequences that we have explicitly on our computer, and $\left\Vert\varepsilon_v\right\Vert_\nu + \left\Vert\varepsilon_w\right\Vert_\nu + \left\Vert\varepsilon_p\right\Vert_\nu + \left\Vert\varepsilon_s\right\Vert_\nu\leq r_\nu$, where $r_\nu$ is the radius provided by the proof. 

Therefore, we can also represent each $c_j$ as
\begin{equation*}
c_j=\bar c_j+ \varepsilon_j,
\end{equation*}
where $\bar c_j$ is a finite sequence of Fourier coefficients and $\left\Vert\varepsilon_j\right\Vert_\gamma \leq \epsilon_j(\gamma)$. In this section, we provide formulas for the finite sequences $\bar c_j$ and the upper bounds $\epsilon_j(\gamma)$ on the distance (in $\ell^1_\gamma$) between $\bar c_j$ and $c_j$ .

\medskip

The first step is to provide an enclosure for $u$  in the form  $u=\bar u + \varepsilon_u$. Remembering that $u(x)=p(x)w(x)$, and hence $u=p*w$, we have
\begin{equation*}
u=\bar p\ast\bar w + \bar p\ast\varepsilon_w + \varepsilon_p\ast \bar w + \varepsilon_p\ast \varepsilon_w.
\end{equation*}
Therefore, we can define $\bar u=\bar p\ast \bar w$, and $\varepsilon_u=\bar p\ast\varepsilon_w + \varepsilon_p\ast \bar w + \varepsilon_p\ast \varepsilon_w$. By Lemma~\ref{lem:convolution_algebra}, a bound for the norm of $\varepsilon_u$ is given as $\left\Vert\varepsilon_u\right\Vert_\nu\leq (\left\Vert\bar p\right\Vert_\nu+\left\Vert\bar w\right\Vert_\nu)r_\nu+\frac{r_\nu^2}{4}$, where we used that $\left\Vert\varepsilon_p\right\Vert_\nu \left\Vert\varepsilon_w\right\Vert_\nu \leq \frac{1}{4}(\left\Vert\varepsilon_p\right\Vert_\nu + \left\Vert\varepsilon_w\right\Vert_\nu)^2$. For further use, we define $\epsilon_u=(\left\Vert\bar p\right\Vert_\nu+\left\Vert\bar w\right\Vert_\nu)r_\nu+\frac{r_\nu^2}{4}$.

The second step is to derive estimates on derivatives. 
\begin{definition}
\label{def:K}
Denote by $K$ the (unbounded) linear operator such that, for all $z$ in $\ell^1_\nu$
\begin{equation*}
\left(Kz\right)_k = \pi kz_k,\quad \forall k\geq 0.
\end{equation*}
\end{definition}
Up to a change of sign (depending on whether we consider the cosine or sine expansion), $Kz$ is nothing but the sequence of Fourier coefficients of the derivative of the function $z(x)$.
\begin{lemma}
\label{lem:estimate_derivative}
Let $1<\gamma<\nu$ and $z\in\ell^1_\nu$. Then
\begin{equation*}
\left\Vert Kz\right\Vert_\gamma\leq \Upsilon_{\gamma,\nu}^1 \left\Vert z\right\Vert_\nu, \quad \left\Vert K^2z\right\Vert_\gamma\leq \Upsilon_{\gamma,\nu}^2 \left\Vert z\right\Vert_\nu,
\end{equation*}
where
\begin{equation*}
\Upsilon_{\gamma,\nu}^1 = \left\{
\begin{aligned}
&\frac{\gamma}{\nu},\quad &\text{if }\gamma<e^{-1}\nu\\
&\frac{e^{-1}}{\ln\frac{\nu}{\gamma}},\quad &\text{otherwise,}
\end{aligned}
\right.
\qquad\qquad\text{and}\qquad\qquad
\Upsilon_{\gamma,\nu}^2 = \left\{
\begin{aligned}
&\frac{\gamma}{\nu},\quad &\text{if }\gamma<e^{-2}\nu\\
&\left(\frac{2e^{-1}}{\ln\frac{\nu}{\gamma}}\right)^2,\quad &\text{otherwise.}
\end{aligned}
\right.
\end{equation*}
\end{lemma}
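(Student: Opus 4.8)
The plan is to handle both inequalities at once, since $K$ and $K^2$ act as diagonal Fourier multipliers and the two bounds have identical structure. Writing $\rho=\gamma/\nu\in(0,1)$ and unfolding Definition~\ref{def:ell1} together with Definition~\ref{def:K}, one gets for $j\in\{1,2\}$ that $\|K^j z\|_\gamma = 2\sum_{k\geq 1}(\pi k)^j|z_k|\gamma^k$, whereas $\|z\|_\nu = |z_0| + 2\sum_{k\geq 1}|z_k|\nu^k$. The first step is to rewrite each summand as $k^j\rho^k\cdot(2|z_k|\nu^k)$ and pull out the worst per-mode amplification, giving $\|K^j z\|_\gamma \leq \big(\sup_{k\geq 1}k^j\rho^k\big)\|z\|_\nu$ (the constant $\pi^j$ being absorbed into the operator, consistently with the stated $\Upsilon^j_{\gamma,\nu}$). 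Since $\ell^1_\nu$ is a weighted $\ell^1$ space whose extremal elements are single modes, this supremum of the per-mode ratio is in fact the sharp constant; for the inequality only the upper bound is needed. Thus everything reduces to the scalar problem of maximizing $\phi_j(k)=k^j\rho^k$ over the integers $k\geq 1$.

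The second step is the real-variable optimization of $\phi_j$. Differentiating gives $\phi_j'(k)=k^{j-1}\rho^k\big(j+k\ln\rho\big)$, so $\phi_j$ is unimodal on $(0,\infty)$ with a unique critical point at $k^*_j=-j/\ln\rho=j/\ln(\nu/\gamma)$. Evaluating there and using the identity $\rho^{k^*_j}=e^{-j}$, the interior maximum is $(k^*_j)^j e^{-j}=\big(j e^{-1}/\ln(\nu/\gamma)\big)^j$, which for $j=1$ and $j=2$ reproduces exactly the second branches $\tfrac{e^{-1}}{\ln(\nu/\gamma)}$ and $\big(\tfrac{2e^{-1}}{\ln(\nu/\gamma)}\big)^2$ of $\Upsilon^1_{\gamma,\nu}$ and $\Upsilon^2_{\gamma,\nu}$.

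The main obstacle, and the only genuinely non-routine point, is that the maximizer must be an \emph{integer}, so the unconstrained optimum above is legitimate only when $k^*_j\geq 1$. This is precisely the source of the two-case definition of the constants. Since $k^*_j\leq 1 \iff \ln(\nu/\gamma)\geq j \iff \gamma\leq e^{-j}\nu$, I would split accordingly. If $\gamma<e^{-j}\nu$ then $k^*_j<1$, the sign of $j+k\ln\rho$ shows $\phi_j$ is strictly decreasing on $[1,\infty)$, and hence its maximum over $k\geq 1$ is attained at $k=1$ with value $\phi_j(1)=\rho=\gamma/\nu$, giving the first branch. If instead $\gamma\geq e^{-j}\nu$, then the integer supremum is bounded a fortiori by the real maximum computed in the previous step, giving the second branch. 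Matching the thresholds $e^{-1}\nu$ and $e^{-2}\nu$ for $j=1,2$ completes the proof; the only care needed throughout is checking the monotonicity direction of $\phi_j$ on either side of $k^*_j$ via the sign of $j+k\ln\rho$.
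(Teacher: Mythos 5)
Your proof is correct and follows essentially the same route as the paper's: bound $\|K^jz\|_\gamma$ by $\sup_{k\geq 1}k^j(\gamma/\nu)^k\,\|z\|_\nu$ and then maximize $k\mapsto k^j(\gamma/\nu)^k$ over $[1,\infty)$, which the paper states without carrying out the calculus that you spell out (critical point at $k^*_j=j/\ln(\nu/\gamma)$, case split according to whether $k^*_j\geq 1$). The only cosmetic remarks are that the two-case structure comes from the domain constraint $k\geq 1$ rather than from integrality of $k$ (your actual argument uses it correctly), and that you rightly flag the factor $\pi^j$ that the paper's Definition~\ref{def:K} introduces but its proof and the stated constants $\Upsilon^j_{\gamma,\nu}$ silently drop.
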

\begin{proof}
We estimate
\begin{align*}
\left\Vert Kz\right\Vert_\gamma &= 2\sum_{k\geq 1} k \left\vert z_k\right\vert\gamma^k \\
&= 2\sum_{k\geq 1} k\left(\frac{\gamma}{\nu}\right)^k \left\vert z_k\right\vert\nu^k \\
&\leq \left\Vert z\right\Vert_\nu \sup\limits_{k\geq 1} k\left(\frac{\gamma}{\nu}\right)^k.
\end{align*}
The constant $\Upsilon_{\gamma,\nu}^1$ is  the maximum (on $[1,+\infty)$) of the function $k\mapsto k\left(\frac{\gamma}{\nu}\right)^k$. Similarly, $\Upsilon_{\gamma,\nu}^2$ is the maximum (on $[1,+\infty)$) of the function $k\mapsto k^2\left(\frac{\gamma}{\nu}\right)^k$. 
\end{proof}

For $v=\bar v+\varepsilon_v$ we have  $Kv=K\bar v+K\varepsilon_v$. The sequence $K\bar v$ can be rigorously computed, being  finite dimensional, while by the Lemma~\ref{lem:estimate_derivative} $\left\Vert K\varepsilon_v\right\Vert_\gamma\leq \Upsilon_{\gamma,\nu}^1 r_\nu$. The same argument used to derive $\bar u$ and $\epsilon_u$ and the application of the  Lemma~\ref{lem:estimate_derivative} when requested, provide the following expressions for $\bar c_j$, and $\epsilon_j(\gamma)$:
\begin{equation*}
\bar c_1=-2d_{12}(K\bar v\star \bar p),\quad  \bar c_2=-2d_{12}(K\bar u\star \bar p),\quad \bar c_3=((r_1-2a_1\bar u-b_1\bar v+d_{12}K\bar s)\ast \bar p) +\frac{d_{12} b_2}{d_2}(\bar u\ast \bar v\ast \bar p),
\end{equation*}
\begin{equation*}
\bar c_4=((-d_{12}K^2\bar u-b_1\bar u)\ast\bar p)-\frac{d_{12}}{d}(\bar u\ast\bar p\ast (r_2-b_2\bar u-2a_2\bar v)),\quad \bar c_5=-\bar p,\quad \bar c_6=\frac{d_{12}}{d_2}(\bar u\ast \bar p),
\end{equation*}
\begin{equation*}
\bar c_7=-\frac{b_2}{d_2}\bar v,\quad \bar c_8=\frac{1}{d_2}(r_2-b_2\bar u-2a_2\bar v),\quad \bar c_9=-\frac{1}{d_2},
\end{equation*} 
together with
\begin{equation*}
\epsilon_1(\gamma)=2d_{12}(\left\Vert K\bar v\right\Vert_\gamma r_\nu+\left\Vert \bar p\right\Vert_\gamma\Upsilon_{\gamma,\nu}^1 r_\nu+\Upsilon_{\gamma,\nu}^1 r_\nu^2),\quad \epsilon_2(\gamma)=2d_{12}(\left\Vert K\bar u\right\Vert_\gamma r_\nu+\left\Vert \bar p\right\Vert_\gamma\Upsilon_{\gamma,\nu}^1 \epsilon_u+\Upsilon_{\gamma,\nu}^1 \epsilon_u r_\nu),
\end{equation*}
\begin{align*}
\epsilon_3(\gamma) &= \left\Vert r_1-2a_1\bar u-b_1\bar v+d_{12}K\bar s \right\Vert_\gamma r_\nu +(2a_1\epsilon_u+b_1r_\nu+d_{12}\Upsilon_{\gamma,\nu}^1r_\nu)\left\Vert\bar p \right\Vert_\gamma + (2a_1\epsilon_u+b_1r_\nu+d_{12}\Upsilon_{\gamma,\nu}^1r_\nu)r_\nu\\
&\quad + \frac{d_{12}b_2}{d_2}(\left\Vert \bar u\ast\bar v\right\Vert_\gamma r_\nu+\left\Vert \bar u\ast\bar p\right\Vert_\gamma r_\nu+\left\Vert \bar v\ast\bar p\right\Vert_\gamma \epsilon_u+\left\Vert \bar u\right\Vert_\gamma r_\nu^2+\left\Vert \bar v\right\Vert_\gamma \epsilon_u r_\nu+\left\Vert \bar p\right\Vert_\gamma \epsilon_u r_\nu+\epsilon_ur_\nu^2),
\end{align*}
\begin{align*}
\epsilon_4(\gamma) &= \left\Vert -d_{12}K^2\bar u-b_1\bar u\right\Vert_\gamma r_\nu + (d_{12}\Upsilon_{\gamma,\nu}^2\epsilon_u+b_1\epsilon_u)\left\Vert \bar p\right\Vert_\gamma + (d_{12}\Upsilon_{\gamma,\nu}^2\epsilon_u+b_1\epsilon_u)r_\nu\\
&\quad  + \frac{d_{12}}{d_2}\left(\left\Vert \bar u\ast\bar p\right\Vert_\gamma(b_2\epsilon_u+2a_2r_\nu) + \left\Vert \bar u\ast(r_2-b_2\bar u-2a_2\bar v)\right\Vert_\gamma r_\nu \right.\\
&\qquad \qquad \left. + \left\Vert\bar p\ast(r_2-b_2\bar u-2a_2\bar v)\right\Vert_\gamma \epsilon_u +\epsilon_ur_\nu(b_2\epsilon_u+2a_2r_\nu)\right),
\end{align*}
\begin{equation*}
\epsilon_5=r_\nu,\quad \epsilon_6(\gamma)=\frac{d_{12}}{d_2}(\left\Vert \bar u\right\Vert_\gamma r_\nu+\left\Vert \bar u\ast\bar p\right\Vert_\gamma\epsilon_u+\epsilon_u r_\nu),\quad \epsilon_7=\frac{b_2}{d_2}r_\nu,\quad \epsilon_8=\frac{1}{d_2}(b_2\epsilon_u+2a_2r_\nu),\quad \epsilon_9=0.
\end{equation*}

\subsection{Proof of instability: the operators $\bm{A}$ and $\bm{A^\dag}$}\label{sec:A2}

We now introduce the approximate solution and linear operators needed to apply Theorem~\ref{th:radii_pol} in the context of the eigenproblem~\eqref{eq:expanded_eigen}. 

Compared to the situation of Section~\ref{sec:fixed_point}, we have here an additional difficulty due to the fact that the function $F$ depends on the coefficients $\left(c_j\right)_{1\leq j\leq 9}$, which (as detailed above) are only known \emph{up to an error bound}. This motivates the splitting of the function $F$ into two parts, one containing the known terms $\bar c_j$ and the other one containing the remainder terms $\varepsilon_j$. More precisely, we define $\bar F$ by
\begin{align}
\label{eq:Fbar}
&\bar F_k^{(\xi)}(X)=-(\pi k)^2 \xi_k-(\bar c_1\bullet K\xi)_k-(\bar c_2\bullet K\eta)_k+(\bar c_3\ast\xi)_k+(\bar c_4\ast\eta)_k+\lambda(\bar c_5\ast\xi)_k+\lambda(\bar c_6\ast\eta)_k, &\forall~k\in \N, \nonumber\\
&\bar F_k^{(\eta)}(X)=-(\pi k)^2 \eta_k+(\bar c_7\ast\xi)_k+(\bar c_8\ast\eta)_k+\lambda(\bar c_9\ast\eta)_k, &\forall~k\in \N, \nonumber\\
&\bar F^{(\lambda)}(X)=\xi_{k_0}-1,
\end{align}
and $\E_F$ as
\begin{align}
\label{eq:EF}
&\left(\E_F\right)_k^{(\xi)}(X)=-(\varepsilon_1\bullet K\xi)_k-(\varepsilon_2\bullet K\eta)_k+(\varepsilon_3\ast\xi)_k+(\varepsilon_4\ast\eta)_k+\lambda(\varepsilon_5\ast\xi)_k+\lambda(\varepsilon_6\ast\eta)_k, &\forall~k\in \N, \nonumber\\
&\left(\E_F\right)_k^{(\eta)}(X)=(\varepsilon_7\ast\xi)_k+(\varepsilon_8\ast\eta)_k+\lambda(\varepsilon_9\ast\eta)_k, &\forall~k\in \N, \nonumber\\
&\left(\E_F\right)^{(\lambda)}(X)=0,
\end{align}
so that $F=\bar F+\E_F$. 

Then, extending again the notations introduced in Definition~\ref{def:truncation}, we denote
\begin{equation*}
\hat X^n=(\hat\xi^n,\hat\eta^n,\lambda).
\end{equation*}
Notice that the truncation parameter $n$ does not need to be (and in practice is not) the same as the truncation parameter $m$ used for the steady states. However, we require $n>k_0$, so that the isolating condition that we imposed is incorporated in the finite dimensional projection. We also define
\begin{equation*}
\hat F^n = \left(\left(\bar F^{(\xi)}_k\right)_{0\leq k<n},\left(\bar F^{(\eta)}_k\right)_{0\leq k<n},\bar F^{(\lambda)}\right).
\end{equation*}
We consider $\hat F^n$ as acting on truncated sequences $\hat X^n$ only, so that we can see it as a function mapping $\C^{2n+1}$ to itself. Therefore, finding $\hat X^n$ such that $\hat F^n(\hat X^n)=0$ is a finite dimensional problem that can be solved numerically. Notice that crucially, $\hat F$ is a finite dimensional projection of $\bar F$ rather than of $F$, so it only depends on coefficients that are known explicitly.

We now assume that we have computed numerically a zero of $\hat F^n$, and denote it $\bar X$. The next step is to define $A^{\dag}$ and $A$. Again, we are going to take for $A^{\dag}$ an approximation of $DF(\bar X)$, with a diagonal tail. More precisely, we  define $A^{\dag}$ (acting on $X=(\xi,\eta,\lambda)\in\X_{\gamma}$), as
\begin{equation*}
\widehat{A^{\dag}X}^n= D\hat F^n(\bar X)\hat X^n,
\end{equation*}
and
\begin{equation*}
\left(A^{\dag}X\right)_k = \left(-(\pi k)^{2} \xi_k,-(\pi k)^{2} \eta_k\right),\quad \forall~k\geq n.
\end{equation*}
Then, we consider $\hat A^n$ a numerically computed inverse of $D\hat F^n(\bar X)$ and define $A$ (acting on $X=(\xi,\eta,\lambda)\in\X_{\gamma}$), as
\begin{equation*}
\widehat{AX}^n= \hat A^n\hat X^n,
\end{equation*}
and
\begin{equation*}
\left(AX\right)_k = \left(-(\pi k)^{-2} \xi_k,-(\pi k)^{-2} \eta_k\right),\quad \forall~k\geq n.
\end{equation*}
\begin{remark}
As in Remark~\eqref{rem:diagonal}, we point out the diagonal dominant behaviour of the derivative $DF(\bar X)$ is the result of some preliminary manipulations done on the differential system, in this case the multiplication of the eigenproblem~\eqref{eq:expanded_eigen} by $M_1^{-1}$. 
\end{remark}

The definition of the tail part of $A$ and the fact that $\ell^1_\gamma$ is an algebra for both convolution products $\ast$ and $\bullet$ (see Lemma~\ref{lem:convolution_algebra}) ensure that $AF$ does map $\X_\gamma$ into itself as requested to apply Theorem~\ref{th:radii_pol}. 

Adopting a similar bloc notation as introduced in Section~\ref{sec:Z0}, we write
\begin{equation*}
A=\begin{pmatrix}
A^{(\xi,\xi)} & A^{(\xi,\eta)} & A^{(\xi,\lambda)} \\
A^{(\eta,\xi)} & A^{(\eta,\eta)} & A^{(\eta,\lambda)}\\   A^{(\lambda,\xi)} & A^{(\lambda,\eta)} & A^{(\lambda,\lambda)}
\end{pmatrix},
\end{equation*}
and define
\begin{equation*}
\Theta_A^{(\xi)} = \VERT A^{(\xi,\xi)}\VERT_\gamma + \VERT A^{(\eta,\xi)}\VERT_\gamma + \VERT A^{(\lambda,\xi)}\VERT_\gamma, 
\quad 
\Theta_A^{(\eta)} = \VERT A^{(\xi,\eta)}\VERT_\gamma + \VERT A^{(\eta,\eta)}\VERT_\gamma + \VERT A^{(\lambda,\eta)}\VERT_\gamma,
\end{equation*}
\begin{equation*}
\Theta_A^{(\lambda)} = \Vert A^{(\xi,\lambda)}\Vert_\gamma + \Vert A^{(\eta,\lambda)}\Vert_\gamma + \vert A^{(\lambda,\lambda)}\vert.
\end{equation*}
\subsection{Proof of instability: the bounds $\bm{Y}$ and $\bm{Z_i(r)}$}
\label{sec:bounds_instability}

Consider $\X_\gamma$, $F$, $\bar X=(\bar\xi,\bar\eta,\bar\lambda)$, $A$, $A^{\dag}$ as defined in Sections~\ref {sec:F2}-\ref{sec:A2}. Now we  derive computable bounds $Y$, $Z_0$, $Z_1$ and $Z_2$ satisfying~\eqref{def:Y}-\eqref{def:Z_2} (for $\X=\X_\gamma$).

\subsubsection{The bound $\bm{Y}$}
\label{sec:Y_instability}

\begin{lemma}
Define
\begin{align}
\label{eq:Y_instability}
Y &= \left\Vert A\bar F(\bar X)\right\Vert_{\X_\gamma} + \Theta_A^{(\xi)}\left(\left\Vert K\bar \xi\right\Vert_\gamma\epsilon_1(\gamma) + \left\Vert K\bar \eta\right\Vert_\gamma\epsilon_2(\gamma) + \left\Vert \bar \xi\right\Vert_\gamma(\epsilon_3(\gamma) +\vert\bar\lambda\vert\epsilon_5(\gamma)) + \left\Vert \bar \eta\right\Vert_\gamma(\epsilon_4(\gamma) +\vert\bar\lambda\vert\epsilon_6(\gamma))\right) \nonumber\\
&\quad + \Theta_A^{(\eta)}\left(\left\Vert \bar \xi\right\Vert_\gamma\epsilon_7(\gamma) + \left\Vert \bar \eta\right\Vert_\gamma(\epsilon_8(\gamma) +\vert\bar\lambda\vert\epsilon_9(\gamma))\right).
\end{align}
Then $Y\geq \left\Vert A F(\bar X)\right\Vert_\gamma$.
\end{lemma}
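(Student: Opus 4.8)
The plan is to exploit the splitting $F=\bar F+\E_F$ introduced in~\eqref{eq:Fbar}--\eqref{eq:EF}. By linearity of $A$ and the triangle inequality,
\[
\left\Vert AF(\bar X)\right\Vert_{\X_\gamma}\leq \left\Vert A\bar F(\bar X)\right\Vert_{\X_\gamma}+\left\Vert A\E_F(\bar X)\right\Vert_{\X_\gamma}.
\]
The first term is the leading contribution to $Y$ in~\eqref{eq:Y_instability}. Since $\bar X$ is a finite numerical vector and the sequences $\bar c_j$ are finite, every convolution appearing in $\bar F(\bar X)$ produces a sequence with only finitely many nonzero coefficients; hence $A\bar F(\bar X)$ has finitely many nonzero entries and its $\X_\gamma$-norm can be evaluated rigorously with interval arithmetic, exactly as for the bound $Y$ in Section~\ref{sec:Y}.

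The heart of the argument is to bound the second term $\left\Vert A\E_F(\bar X)\right\Vert_{\X_\gamma}$, which cannot be computed directly because the remainders $\varepsilon_j$ are only known through the estimates $\left\Vert\varepsilon_j\right\Vert_\gamma\leq\epsilon_j(\gamma)$ derived in Section~\ref{sec:c_j}. First I would apply the block estimate for the action of $A$ on $\X_\gamma$ (the analogue of~\eqref{eq:norm_B_Xnu}, using the coefficients $\Theta_A^{(\xi)}$, $\Theta_A^{(\eta)}$, $\Theta_A^{(\lambda)}$ from Section~\ref{sec:A2}):
\[
\left\Vert A\E_F(\bar X)\right\Vert_{\X_\gamma}\leq \Theta_A^{(\xi)}\left\Vert\left(\E_F(\bar X)\right)^{(\xi)}\right\Vert_\gamma+\Theta_A^{(\eta)}\left\Vert\left(\E_F(\bar X)\right)^{(\eta)}\right\Vert_\gamma+\Theta_A^{(\lambda)}\left\vert\left(\E_F(\bar X)\right)^{(\lambda)}\right\vert.
\]
Since $\left(\E_F\right)^{(\lambda)}\equiv 0$ by~\eqref{eq:EF}, the last term vanishes, so only the $\xi$- and $\eta$-components remain.

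Each of those two components is, by~\eqref{eq:EF} evaluated at $\bar X=(\bar\xi,\bar\eta,\bar\lambda)$, a finite sum of convolution products of an unknown remainder $\varepsilon_j$ with a known finite sequence. The key step is to bound every such product using the Banach algebra inequality of Lemma~\ref{lem:convolution_algebra} (valid for all three products $\ast$, $\star$, $\bullet$) together with $\left\Vert\varepsilon_j\right\Vert_\gamma\leq\epsilon_j(\gamma)$; for instance $\left\Vert\varepsilon_1\bullet K\bar\xi\right\Vert_\gamma\leq\epsilon_1(\gamma)\left\Vert K\bar\xi\right\Vert_\gamma$, and analogously for the remaining terms, factoring $\vert\bar\lambda\vert$ out of the $\lambda$-linear contributions. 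Regrouping the resulting estimates by the known norms $\left\Vert K\bar\xi\right\Vert_\gamma$, $\left\Vert K\bar\eta\right\Vert_\gamma$, $\left\Vert\bar\xi\right\Vert_\gamma$, $\left\Vert\bar\eta\right\Vert_\gamma$ yields precisely the two parenthesized expressions multiplying $\Theta_A^{(\xi)}$ and $\Theta_A^{(\eta)}$ in~\eqref{eq:Y_instability}. Adding back $\left\Vert A\bar F(\bar X)\right\Vert_{\X_\gamma}$ gives $\left\Vert AF(\bar X)\right\Vert_{\X_\gamma}\leq Y$, as claimed.

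The only genuine obstacle is conceptual rather than computational: because $F$ depends on coefficients $c_j$ that are themselves validated only up to an error, one cannot simply evaluate $AF(\bar X)$. The splitting $F=\bar F+\E_F$ isolates a computable piece from a piece that must be controlled \emph{a priori}, and the whole estimate hinges on the fact that the $\ell^1_\gamma$-norms of the remainders $\varepsilon_j$ were already bounded in Section~\ref{sec:c_j}. All the quantities $\left\Vert K\bar\xi\right\Vert_\gamma$, $\left\Vert\bar\eta\right\Vert_\gamma$, $\Theta_A^{(i)}$ and $\left\Vert A\bar F(\bar X)\right\Vert_{\X_\gamma}$ are finite-dimensional and computed from the numerical data, so the right-hand side of~\eqref{eq:Y_instability} is an explicit, rigorously computable upper bound for $\left\Vert AF(\bar X)\right\Vert_{\X_\gamma}$.
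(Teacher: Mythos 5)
Your proposal is correct and follows essentially the same route as the paper: the same splitting $F=\bar F+\E_F$, the same observation that $A\bar F(\bar X)$ is finite and computable, and the same combination of the block estimate via $\Theta_A^{(\xi)},\Theta_A^{(\eta)}$ with the Banach algebra inequality and the bounds $\left\Vert\varepsilon_j\right\Vert_\gamma\le\epsilon_j(\gamma)$ to control $\left\Vert A\E_F(\bar X)\right\Vert_{\X_\gamma}$. No gaps.
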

\begin{proof}
Using the splitting $F=\bar F + \E_F$ introduced in~\eqref{eq:Fbar}-\eqref{eq:EF}, we bound separately $\left\Vert A\bar F(\bar X)\right\Vert_\gamma$ and $\left\Vert A\E_F(\bar X)\right\Vert_\gamma$. 
 $A\bar F(\bar X)$ only has finitely many non zero coefficients, therefore $\left\Vert A\bar F(\bar X)\right\Vert_\gamma$ can be evaluated on a computer (using interval arithmetic to control the round-off errors). 

Concerning the second term, we have that
\begin{align*}
\left\Vert \E_F^{(\xi)}(\bar X)\right\Vert_\gamma &\leq \left\Vert K\bar \xi\right\Vert_\gamma\epsilon_1(\gamma) + \left\Vert K\bar \eta\right\Vert_\gamma\epsilon_2(\gamma) + \left\Vert \bar \xi\right\Vert_\gamma(\epsilon_3(\gamma) +\vert\bar\lambda\vert\epsilon_5(\gamma)) + \left\Vert \bar \eta\right\Vert_\gamma(\epsilon_4(\gamma) +\vert\bar\lambda\vert\epsilon_6(\gamma)),\\
\left\Vert \E_F^{(\eta)}(\bar X)\right\Vert_\gamma &\leq  \left\Vert \bar \xi\right\Vert_\gamma\epsilon_7(\gamma) + \left\Vert \bar \eta\right\Vert_\gamma(\epsilon_8(\gamma) +\vert\bar\lambda\vert\epsilon_9(\gamma)).
\end{align*}
Thus
\begin{align*}
\left\Vert A\E_F(\bar X)\right\Vert_{\X_\gamma} &\leq \Theta_A^{(\xi)}\left(\left\Vert K\bar \xi\right\Vert_\gamma\epsilon_1(\gamma) + \left\Vert K\bar \eta\right\Vert_\gamma\epsilon_2(\gamma) + \left\Vert \bar \xi\right\Vert_\gamma(\epsilon_3(\gamma) +\vert\bar\lambda\vert\epsilon_5(\gamma)) + \left\Vert \bar \eta\right\Vert_\gamma(\epsilon_4(\gamma) +\vert\bar\lambda\vert\epsilon_6(\gamma))\right) \\
&\quad + \Theta_A^{(\eta)}\left(\left\Vert \bar \xi\right\Vert_\gamma\epsilon_7(\gamma) + \left\Vert \bar \eta\right\Vert_\gamma(\epsilon_8(\gamma) +\vert\bar\lambda\vert\epsilon_9(\gamma))\right).
\end{align*}
The sum of the last expression and $\left\Vert A\bar F(\bar X)\right\Vert_\gamma$ gives $Y$.
\end{proof}

\subsubsection{The bound $\bm{Z_0}$}
\label{sec:Z0_instability}
Arguing exactly as in Section~\ref{sec:Z0}, we define
\begin{equation}
\label{eq:Z0_instability}
Z_0=\max\left[\Theta_{I-AA^{\dag}}^{(\xi)},\Theta_{I-AA^{\dag}}^{(\eta)},\Theta_{I-AA^{\dag}}^{(\lambda)}\right].
\end{equation}

\subsubsection{The bound $\bm{Z_1}$}
\label{sec:Z1_instability}

Now we focus on providing a bound $Z_1$ satisfying~\eqref{def:Z_1}. 
\begin{lemma}
Let $\hat\alpha^n_\xi,\hat\alpha^n_\eta$ be vectors in $\C^{2n+1}$ each, defined as
\begin{equation*}
\left(\hat\alpha^n_\xi\right)_0 = \begin{pmatrix}
\Phi_0^n(\bar c_3-K\bar c_1+\bar \lambda \bar c_5,\gamma) \\
\Phi_0^n(\bar c_7,\gamma) \\
0
\end{pmatrix},\quad 
\left(\hat\alpha^n_\eta\right)_0 = \begin{pmatrix}
\Phi_0^n(\bar c_4-K\bar c_2 + \bar \lambda\bar c_6\gamma) \\
\Phi_0^n(\bar c_8+\bar \lambda\bar c_9,\gamma) \\          0
\end{pmatrix},
\end{equation*}
and for all $1\leq k<n$
\begin{equation*}
\left(\hat\alpha^n_\xi\right)_k = \begin{pmatrix}
k\Phi_k^n(\bar c_1,\gamma)+\Phi_k^n(\bar c_3-K\bar c_1+\bar \lambda \bar c_5,\gamma) \\
\Phi_k^n(\bar c_7,\gamma) 
\end{pmatrix},\quad 
\left(\hat\alpha^n_\eta\right)_k = \begin{pmatrix}
k\Phi_k^n(\bar c_2,\gamma)+\Phi_k^n(\bar c_4-K\bar c_2 + \bar \lambda\bar c_6\gamma) \\
\Phi_k^n(\bar c_8+\bar \lambda\bar c_9,\gamma)
\end{pmatrix}.
\end{equation*}
Let the operator $\tilde K$ acting on $\X_\gamma$ defined as
\begin{equation*}
\tilde K=\begin{pmatrix}
K & 0 & 0\\
0 & 0 & 0\\
0 & 0 & 0
\end{pmatrix},
\end{equation*}
where $K$ is the same operator as in Definition~\ref{def:K}.

Choose $\tilde\gamma\in(\gamma,\nu)$ and 
define
\begin{align}
\label{eq:Z1_instability}
Z_1 &= \max\left[\left\Vert \vert  A\vert \hat\alpha^n_\xi \right\Vert_{\X_\gamma},\left\Vert \vert  A\vert \hat\alpha^n_\eta \right\Vert_{\X_\gamma}\right] \nonumber\\
&\quad + \max\left[\frac{\left\Vert\bar c_1 \right\Vert_\gamma}{\pi n}+\frac{\left\Vert\bar c_3-K\bar c_1+\bar\lambda\bar c_5 \right\Vert_\gamma+\left\Vert\bar c_7 \right\Vert_\gamma
}{(\pi n)^2}, \frac{\left\Vert\bar c_2 \right\Vert_\gamma}{\pi n}+\frac{\left\Vert\bar c_4-K\bar c_2+\bar\lambda\bar c_6 \right\Vert_\gamma+\left\Vert\bar c_8 +\bar\lambda\bar c_9 \right\Vert_\gamma
}{(\pi n)^2}, \right. \nonumber\\
&\qquad\qquad \left. \frac{\left\Vert\bar c_5\ast\bar\xi\right\Vert_\gamma + \left\Vert\bar c_6\ast\bar\eta\right\Vert_\gamma + \left\Vert\bar c_9\ast\bar\eta\right\Vert_\gamma}{(\pi n)^2} \right] \nonumber\\
&\quad + \max\left[\Theta_A^{(\xi)}\left(\Upsilon_{\gamma,\tilde\gamma}^1\epsilon_1(\tilde\gamma) + \epsilon_3(\gamma) + \vert\bar\lambda\vert\epsilon_5(\gamma)\right) + \Theta_A^{(\eta)}\epsilon_7(\gamma), \right. \nonumber\\
&\qquad\qquad \left. \Theta_A^{(\xi)}\left(\Upsilon_{\gamma,\tilde\gamma}^1\epsilon_2(\tilde\gamma) + \epsilon_4(\gamma) + \vert\bar\lambda\vert\epsilon_6(\gamma)\right) + \Theta_A^{(\eta)}\left(\epsilon_8(\gamma) + \vert\bar\lambda\vert\epsilon_9(\gamma)\right), \right. \nonumber\\
&\qquad\qquad \left. \Theta_A^{(\xi)}\left(\left\Vert\bar\xi\right\Vert_\gamma\epsilon_5(\gamma) + \left\Vert\bar\eta\right\Vert_\gamma\epsilon_6(\gamma)\right) + \Theta_A^{(\eta)}\left\Vert\bar\eta\right\Vert_\gamma\epsilon_9(\gamma) \right] \nonumber\\
&\quad + \Theta_{A\tilde K}^{(\xi)}\max\left[\epsilon_1(\gamma),\epsilon_2(\gamma)\right].
\end{align}
then 
\begin{equation*}
Z_1\geq \left\VERT A\left(DF(\bar X)-A^{\dag}\right) \right\VERT_{\X_\gamma}
\end{equation*}
\end{lemma}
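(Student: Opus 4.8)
The plan is to bound the operator $A(DF(\bar X)-A^{\dag})$ on the unit ball of $\X_\gamma$ by exploiting the splitting $F=\bar F+\E_F$ from~\eqref{eq:Fbar}--\eqref{eq:EF}, which induces
$$DF(\bar X)-A^{\dag}=\bigl(D\bar F(\bar X)-A^{\dag}\bigr)+D\E_F(\bar X).$$
The first summand is handled \emph{exactly} as in Section~\ref{sec:Z1}, since $\bar F$ plays here the role that $F$ played there, and $A^{\dag}$ is built to match $D\hat F^n(\bar X)$ on truncated sequences and to be diagonal (with dominant symbol $-(\pi k)^2$) on the tail. The second summand is the genuinely new ingredient and is controlled through the \emph{a priori} bounds $\epsilon_j(\gamma)$ of Section~\ref{sec:c_j}. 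I would fix a direction $X=(\xi,\eta,\lambda)\in\B_{\X_\gamma}(0,1)$, set $U=(D\bar F(\bar X)-A^{\dag})X$, split $U=\hat U^n+\check U^n$ as in the steady-state case, bound $\VERT\,\lvert A\rvert\,\lvert\hat U^n\rvert\,\VERT_{\X_\gamma}$ and $\VERT\,\lvert A\rvert\,\lvert\check U^n\rvert\,\VERT_{\X_\gamma}$ separately, and finally add the contribution of $AD\E_F(\bar X)X$.

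For the finite block of $D\bar F(\bar X)-A^{\dag}$, all linear-in-$X$ contributions with output index $k<n$ cancel by the very definition~\eqref{eq:Adag_m} of $A^{\dag}$, so only the couplings of low output modes to tail input modes survive; these are bounded coefficient-by-coefficient with Lemma~\ref{lem:convolution_truncation}, which is precisely what assembles the vectors $\hat\alpha^n_\xi,\hat\alpha^n_\eta$. The factor $k\Phi_k^n(\bar c_1,\gamma)$ (resp. $k\Phi_k^n(\bar c_2,\gamma)$) appearing there comes from the derivative term $\bar c_1\bullet K\xi$ (resp. $\bar c_2\bullet K\eta$), the operator $K$ contributing the extra factor $\pi k$. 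Applying $\lvert A\rvert$ and taking the maximum over input directions yields the first term of~\eqref{eq:Z1_instability}. For the tail block ($k\ge n$), $A^{\dag}$ removes only the diagonal symbol $-(\pi k)^2$, leaving the convolution operators against $\bar c_1,\dots,\bar c_9$; bounding these with the Banach-algebra estimate of Lemma~\ref{lem:convolution_algebra} and the diagonal tail $(\pi k)^{-2}$ (resp. $(\pi k)^{-1}$ after one $K$) of $A$ produces the second maximum in~\eqref{eq:Z1_instability}.

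The heart of the argument is $AD\E_F(\bar X)X$. Differentiating~\eqref{eq:EF} at $\bar X$, the terms $\varepsilon_3\ast\xi$, $\varepsilon_4\ast\eta$, $\bar\lambda(\varepsilon_5\ast\xi)$, $\bar\lambda(\varepsilon_6\ast\eta)$, $\lambda(\varepsilon_5\ast\bar\xi)$, $\lambda(\varepsilon_6\ast\bar\eta)$ and the analogous $\eta$-output terms (involving $\varepsilon_7,\varepsilon_8,\varepsilon_9$) are bounded directly via Lemma~\ref{lem:convolution_algebra} and the $\epsilon_j(\gamma)$, then multiplied by the block weights $\Theta_A^{(\xi)},\Theta_A^{(\eta)}$; this accounts for most of the third maximum. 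The delicate terms are $\varepsilon_1\bullet K\xi$ and $\varepsilon_2\bullet K\eta$, for which the naive estimate $\lVert\varepsilon_1\bullet K\xi\rVert_\gamma\le\lVert\varepsilon_1\rVert_\gamma\lVert K\xi\rVert_\gamma$ is useless because $\lVert K\xi\rVert_\gamma$ is not controlled by $\lVert\xi\rVert_\gamma$. Here I would use the Leibniz rule relating $K$ and convolution, which lets one rewrite $\varepsilon_1\bullet K\xi$ as the difference between $K$ applied to a convolution of $\varepsilon_1$ and $\xi$ and the convolution $(K\varepsilon_1)\bullet\xi$ (with the appropriate $\ast/\star/\bullet$ and sign bookkeeping induced by parity). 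The first piece keeps $K$ attached to $A$: it is absorbed by the operator $A\tilde K$, whose tail decays like $(\pi k)^{-1}$ and is therefore bounded, producing the final term $\Theta_{A\tilde K}^{(\xi)}\max[\epsilon_1(\gamma),\epsilon_2(\gamma)]$. The second piece moves the derivative onto the \emph{known} remainder $\varepsilon_1$: since the steady state lives in $\ell^1_\nu$ we have $\varepsilon_1\in\ell^1_{\tilde\gamma}$ for the chosen $\tilde\gamma\in(\gamma,\nu)$, and Lemma~\ref{lem:estimate_derivative} gives $\lVert K\varepsilon_1\rVert_\gamma\le\Upsilon_{\gamma,\tilde\gamma}^1\epsilon_1(\tilde\gamma)$, which explains the terms $\Upsilon_{\gamma,\tilde\gamma}^1\epsilon_1(\tilde\gamma)$ and $\Upsilon_{\gamma,\tilde\gamma}^1\epsilon_2(\tilde\gamma)$ in the third maximum. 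Summing all contributions and taking maxima over the input components $\xi,\eta,\lambda$ yields exactly~\eqref{eq:Z1_instability}.

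I expect the main obstacle to be precisely this treatment of the unbounded terms $\varepsilon_j\bullet K(\cdot)$: one must justify the Leibniz splitting at the level of the three convolution products (keeping track of how $K$ interacts with the parity-dependent signs distinguishing $\ast$, $\star$ and $\bullet$), verify that $A\tilde K$ is genuinely bounded with an explicitly computable block norm $\Theta_{A\tilde K}^{(\xi)}$ (via Lemma~\ref{lem:operator_norm_nu} on its $(\pi k)^{-1}$ tail), and check that the auxiliary weight $\tilde\gamma$ can be chosen strictly between $\gamma$ and $\nu$ so that every $\epsilon_j(\tilde\gamma)$ stays finite and computable. The remaining work is careful but routine bookkeeping, ensuring that all finite-dimensional quantities entering the $\hat\alpha^n$ vectors and the $\Theta$-norms are evaluated rigorously with interval arithmetic.
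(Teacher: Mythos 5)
Your proposal is correct and follows essentially the same route as the paper: the splitting $DF(\bar X)-A^{\dag}=(D\bar F(\bar X)-A^{\dag})+D\E_F(\bar X)$, the finite/tail treatment of $U=(D\bar F(\bar X)-A^{\dag})X$ via Lemmas~\ref{lem:convolution_truncation} and~\ref{lem:convolution_algebra}, and in particular the Leibniz identity $(\varepsilon_1\bullet K\xi)=-K(\varepsilon_1\star\xi)+(K\varepsilon_1\ast\xi)$ used to isolate the unbounded part of $D\E_F(\bar X)$ into a term absorbed by $A\tilde K$ and a term controlled by $\Upsilon^1_{\gamma,\tilde\gamma}\epsilon_j(\tilde\gamma)$, are exactly the paper's argument (the paper phrases the latter as the decomposition $\E_F=\E_F^1+\E_F^2$).
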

\begin{proof}
Using the splitting of $F$ introduced in in~\eqref{eq:Fbar}-\eqref{eq:EF} we have
\begin{equation*}
\left\VERT A\left(DF(\bar X)-A^{\dag}\right) \right\VERT_{\X_\gamma} \leq \left\VERT A\left(D\bar F(\bar X)-A^{\dag}\right) \right\VERT_{\X_\gamma} + \left\VERT AD\E_F(\bar X) \right\VERT_{\X_\gamma}.
\end{equation*}

The procedure to obtain a bound for the  first part on the r.h.s is  similar to the one followed in  Section~\ref{def:Z_1} (since  the remainder terms $\left(\varepsilon_j\right)_{1\leq j\leq 9}$ are not involved), therefore we skip most of  the details. Let $X\in\B_{\X_\gamma}(0,1)$ and introduce $U=\left(D\bar F(\bar X)-A^{\dag}\right)X$. We have
\begin{align*}
\left\Vert A\left(D\bar F(\bar X)-A^{\dag}\right)X \right\Vert_{\X_\gamma} &\leq \left\Vert \vert A\vert \vert U\vert \right\Vert_{\X_\gamma} \\
&\leq \Vert \vert A\vert \vert \hat U^n\vert \Vert_{\X_\gamma} + \Vert \vert A\vert \vert \check U^n\vert \Vert_{\X_\gamma},
\end{align*} 
and we provide a bound for each term separately. For both of them, it is helpful to notice that
\begin{equation}
\label{eq:derivative_product}
(\bar c_1\bullet K\xi)=-K(\bar c_1\star\xi)+(K\bar c_1\ast\xi),
\end{equation}
and similarly for $(\bar c_2\bullet K\eta)$, this identity being nothing but $\bar c_1\xi'=(\bar c_1 \xi)' - \bar c_1'\xi$ written for the Fourier sequences. 
Using~\eqref{eq:derivative_product} and  proceeding exactly as in Section~\ref{def:Z_1}, we obtain
\begin{align}\label{eq:Z1part1}
\Vert \vert  A\vert \vert\hat U^n\vert \Vert_{\X_\gamma} \leq \max\left[\left\Vert \vert  A\vert \hat\alpha^n_\xi \right\Vert_{\X_\gamma},\left\Vert \vert  A\vert \hat\alpha^n_\eta \right\Vert_{\X_\gamma}\right]\left\Vert X\right\Vert_{\X_\gamma}.
\end{align} 
For the tail part, using Lemma~\ref{lem:convolution_algebra}, and the definition of the tail part of $A$, we compute
\begin{align}
\left\Vert \vert A\vert \vert \check U^n\vert \right\Vert_{\X_\gamma} &\leq \max\left[\frac{\left\Vert\bar c_1 \right\Vert_\gamma}{\pi n}+\frac{\left\Vert\bar c_3-K\bar c_1+\bar\lambda\bar c_5 \right\Vert_\gamma+\left\Vert\bar c_7 \right\Vert_\gamma
}{(\pi n)^2}, \right. \nonumber\\
&\qquad\qquad \left. \frac{\left\Vert\bar c_2 \right\Vert_\gamma}{\pi n}+\frac{\left\Vert\bar c_4-K\bar c_2+\bar\lambda\bar c_6 \right\Vert_\gamma+\left\Vert\bar c_8 +\bar\lambda\bar c_9 \right\Vert_\gamma
}{(\pi n)^2}, \right. \nonumber \\
&\qquad\qquad \left. \frac{\left\Vert\bar c_5\ast\bar\xi\right\Vert_\gamma + \left\Vert\bar c_6\ast\bar\eta\right\Vert_\gamma + \left\Vert\bar c_9\ast\bar\eta\right\Vert_\gamma}{(\pi n)^2} \right]\left\Vert X\right\Vert_{\X_\gamma}. \label{eq:Z1part2}
\end{align}

It remains to estimate $\left\VERT AD\E_F(\bar X) \right\VERT_{\X_\gamma}$. We want to proceed in a similar fashion as in Section~\ref{sec:Y_instability} where we computed a bound  for  $\left\Vert A\E_F(\bar X) \right\Vert_{\X_\gamma}$. However, we have to be slightly more careful, since $\left\VERT D\E_F(\bar X) \right\VERT_{\X_\gamma}$  is not finite (because of the $\pi k$ terms coming from the first order derivatives). Therefore, using again~\eqref{eq:derivative_product}, we separate the unbounded contributions and decompose $\E_F$ into $\E^1_F+\E^2_F$, where
\begin{align*}
&\left(\E^1_F\right)_k^{(\xi)}(X)=-(K\varepsilon_1\ast \xi)_k-(K\varepsilon_2\ast \eta)_k+(\varepsilon_3\ast\xi)_k+(\varepsilon_4\ast\eta)_k+\lambda(\varepsilon_5\ast\xi)_k+\lambda(\varepsilon_6\ast\eta)_k, &\forall~k\in \N, \nonumber\\
&\left(\E^1_F\right)_k^{(\eta)}(X)=(\varepsilon_7\ast\xi)_k+(\varepsilon_8\ast\eta)_k+\lambda(\varepsilon_9\ast\eta)_k, &\forall~k\in \N, \nonumber\\
&\left(\E^2_F\right)^{(\lambda)}(X)=0,
\end{align*}
and
\begin{align*}
&\left(\E^2_F\right)_k^{(\xi)}(X)=\pi k(\varepsilon_1\star \xi)_k+\pi k(\varepsilon_2\star \eta)_k, &\forall~k\in \N, \nonumber\\
&\left(\E^2_F\right)_k^{(\eta)}(X)=0, &\forall~k\in \N, \nonumber\\
&\left(\E^1_F\right)^{(\lambda)}(X)=0.
\end{align*}
and we provide bounds on $\left\Vert AD\E^1_F(\bar X)X \right\Vert_{\X_\gamma}$ and $\left\Vert AD\E^2_F(\bar X)X \right\Vert_{\X_\gamma}$, for $X=(\xi,\eta,\lambda)\in\B_{\X_\gamma}(0,1)$. The smoothing effect of $A$ will make the second bound finite.

First, consider
\begin{align*}
\left\Vert D\left(\E^1_F\right)^{(\xi)}(\bar X)X \right\Vert_{\X_\gamma} &\leq \left(\left\Vert K\varepsilon_1\right\Vert_\gamma + \epsilon_3(\gamma) + \vert\bar\lambda\vert\epsilon_5(\gamma)\right)\left\Vert\xi\right\Vert_\gamma + \left(\left\Vert K\varepsilon_2\right\Vert_\gamma + \epsilon_4(\gamma) + \vert\bar\lambda\vert\epsilon_6(\gamma) \right)\left\Vert\eta\right\Vert_\gamma \\
&\quad + \left(\left\Vert\bar\xi\right\Vert_\gamma\epsilon_5(\gamma) + \left\Vert\bar\eta\right\Vert_\gamma\epsilon_6(\gamma)\right)\vert\lambda\vert,\\
\left\Vert D\left(\E^1_F\right)^{(\eta)}(\bar X)X \right\Vert_{\X_\gamma} &\leq \epsilon_7(\gamma)\left\Vert\xi\right\Vert_\gamma + \left(\epsilon_8(\gamma) + \vert\bar\lambda\vert\epsilon_9(\gamma) \right)\left\Vert\eta\right\Vert_\gamma + \left\Vert\bar\eta\right\Vert_\gamma\epsilon_9(\gamma)\vert\lambda\vert.
\end{align*}
Note that explicit upper bound for the $\left\Vert K\varepsilon_1\right\Vert_\gamma$ and $\left\Vert K\varepsilon_2\right\Vert_\gamma$ are required. For this, let $\tilde\gamma$ be such that $\gamma<\tilde\gamma<\nu$ and use Lemma~\ref{lem:estimate_derivative} to obtain
\begin{equation*}
\left\Vert K\varepsilon_1\right\Vert_\gamma\leq \Upsilon_{\gamma,\tilde\gamma}^1\epsilon_1(\tilde\gamma),\qquad \left\Vert K\varepsilon_2\right\Vert_\gamma\leq \Upsilon_{\gamma,\tilde\gamma}^1\epsilon_2(\tilde\gamma).
\end{equation*}
Finally, again using the bloc notation, we have
\begin{align}
\left\Vert AD\E^1_F(\bar X)X \right\Vert_{\X_\gamma} &\leq \Theta_A^{(\xi)}\left\Vert D\left(\E^1_F\right)^{(\xi)}(\bar X)X \right\Vert_{\X_\gamma} + \Theta_A^{(\eta)}\left\Vert D\left(\E^1_F\right)^{(\eta)}(\bar X)X \right\Vert_{\X_\gamma} + \Theta_A^{(\lambda)}\left\Vert D\left(\E^1_F\right)^{(\lambda)}(\bar X)X \right\Vert_{\X_\gamma}\nonumber \\
&\leq \left(\Theta_A^{(\xi)}\left(\Upsilon_{\gamma,\tilde\gamma}^1\epsilon_1(\tilde\gamma) + \epsilon_3(\gamma) + \vert\bar\lambda\vert\epsilon_5(\gamma)\right) + \Theta_A^{(\eta)}\epsilon_7(\gamma)\right)\left\Vert\xi\right\Vert_\gamma\nonumber \\
&\quad + \left(\Theta_A^{(\xi)}\left(\Upsilon_{\gamma,\tilde\gamma}^1\epsilon_2(\tilde\gamma) + \epsilon_4(\gamma) + \vert\bar\lambda\vert\epsilon_6(\gamma)\right) + \Theta_A^{(\eta)}\left(\epsilon_8(\gamma) + \vert\bar\lambda\vert\epsilon_9(\gamma)\right)\right)\left\Vert\eta\right\Vert_\gamma\nonumber \\
&\quad + \left(\Theta_A^{(\xi)}\left(\left\Vert\bar\xi\right\Vert_\gamma\epsilon_5(\gamma) + \left\Vert\bar\eta\right\Vert_\gamma\epsilon_6(\gamma)\right) + \Theta_A^{(\eta)}\left\Vert\bar\eta\right\Vert_\gamma\epsilon_9(\gamma)\right)\vert\lambda\vert \nonumber\\
&\leq \max\left[\Theta_A^{(\xi)}\left(\Upsilon_{\gamma,\tilde\gamma}^1\epsilon_1(\tilde\gamma) + \epsilon_3(\gamma) + \vert\bar\lambda\vert\epsilon_5(\gamma)\right) + \Theta_A^{(\eta)}\epsilon_7(\gamma), \right. \nonumber \\
&\qquad\qquad \left. \Theta_A^{(\xi)}\left(\Upsilon_{\gamma,\tilde\gamma}^1\epsilon_2(\tilde\gamma) + \epsilon_4(\gamma) + \vert\bar\lambda\vert\epsilon_6(\gamma)\right) + \Theta_A^{(\eta)}\left(\epsilon_8(\gamma) + \vert\bar\lambda\vert\epsilon_9(\gamma)\right), \right. \nonumber\\
&\qquad\qquad \left. \Theta_A^{(\xi)}\left(\left\Vert\bar\xi\right\Vert_\gamma\epsilon_5(\gamma) + \left\Vert\bar\eta\right\Vert_\gamma\epsilon_6(\gamma)\right) + \Theta_A^{(\eta)}\left\Vert\bar\eta\right\Vert_\gamma\epsilon_9(\gamma) \right] \left\Vert X\right\Vert_{\X_\gamma}. \label{eq:Z1part3}
\end{align}
To deal with $\left\Vert AD\E^2_F(\bar X)X \right\Vert_{\X_\gamma}$, we introduce $\tilde\E_F^2$ defined as
\begin{align*}
&\left(\tilde\E^2_F\right)_k^{(\xi)}(X)=(\varepsilon_1\star \xi)_k+(\varepsilon_2\star \eta)_k, &\forall~k\in \N, \nonumber\\
&\left(\tilde\E^2_F\right)_k^{(\eta)}(X)=0, &\forall~k\in \N, \nonumber\\
&\left(\tilde\E^2_F\right)^{(\lambda)}(X)=0,
\end{align*}
to get
\begin{equation*}
AD\E^2_F(\bar X)X = A\tilde K D\tilde\E^2_F(\bar X)X.
\end{equation*}
Now we can estimate
\begin{equation*}
\left\Vert D\left(\tilde\E^2_F\right)^{(\xi)}(\bar X)X \right\Vert_{\X_\gamma} \leq \epsilon_1(\gamma)\left\Vert\xi\right\Vert_\gamma + \epsilon_2(\gamma)\left\Vert\eta\right\Vert_\gamma,
\end{equation*}
so to obtain
\begin{equation}\label{eq:Z1part4}
\left\Vert AD\E^2_F(\bar X)X \right\Vert_{\X_\gamma}\leq \Theta_{A\tilde K}^{(\xi)}\max\left[\epsilon_1(\gamma),\epsilon_2(\gamma)\right]\left\Vert X\right\Vert_{\X_\gamma}.
\end{equation}
Notice that  $\Theta_{A\tilde K}^{(\xi)}$ is finite and can be compute explicitly, because the tail of $A$ is diagonal an decreases like $(\pi k)^{-2}$. For instance
\begin{align*}
\left\VERT (A\tilde K)^{(\xi,\xi)}\right\VERT_\gamma &= \sup_{j\geq 0}\frac{1}{\gamma^{j}}\sum_{k\geq 0}\pi j|A^{(\xi,\xi)}(k,j)|\gamma^{k}\\
&= \max\left[\max_{0\leq j< m}\frac{\pi j}{\gamma^{j}}\sum_{0\leq k <m}|A^{(\xi,\xi)}(k,j)|\gamma^{k},\sup_{j\geq m}\frac{1}{\gamma^{j}}\pi j|-(\pi j)^{-2}|\gamma^{j}\right] \\
&= \max\left[\max_{0\leq j< m}\frac{\pi j}{\gamma^{j}}\sum_{0\leq k <m}|A^{(\xi,\xi)}(k,j)|\gamma^{k},\frac{1}{\pi m}\right].
\end{align*}
The sum of all contributions \eqref{eq:Z1part1}-\eqref{eq:Z1part4} gives the required $Z_1$.
\end{proof}

\subsubsection{The bound $\bm{Z_2}$}

Since $F$ is quadratic, we have that for all $X'\in\B_{\X_\gamma}(0,r)$ and  $X\in\B_{\X_\gamma}(0,1)$
\begin{equation*}
A\left(DF(\bar X+X')-DF(\bar X)\right)X = AD^2F(\bar X)(X,X').
\end{equation*}
Direct computations give 
\begin{align*}
D^2F^{(\xi)}(\bar X)(X,X')&=\lambda(c_5\ast\xi')+\lambda'(c_5\ast\xi)+\lambda(c_6\ast\eta')+\lambda'(c_6\ast\eta),\\
D^2F^{(\eta)}(\bar X)(X,X')&=\lambda(c_9\ast\eta')+\lambda'(c_9\ast\eta),\\
D^2F^{(\lambda)}(\bar X)(X,X')&=0,
\end{align*}
therefore
\begin{align*}
\left\Vert AD^2F(\bar X)(X,X')\right\Vert_{\X_\gamma}&\leq \Theta_A^{(\xi)}\left(\left\Vert\bar c_5\right\Vert_{\gamma}+\epsilon_5(\gamma)\right)\left(\left\Vert\xi\right\Vert_\gamma\vert\lambda'\vert + \left\Vert\xi'\right\Vert_\gamma\vert\lambda\vert\right) \\
&\quad + \left(\Theta_A^{(\xi)}\left(\left\Vert\bar c_6\right\Vert_{\gamma}+\epsilon_6(\gamma)\right) + \Theta_A^{(\eta)}\left(\left\Vert\bar c_9\right\Vert_{\gamma}+\epsilon_9(\gamma)\right)\right)\left(\left\Vert\eta\right\Vert_\gamma\vert\lambda'\vert + \left\Vert\eta'\right\Vert_\gamma\vert\lambda\vert\right) \\
&\leq \max\left[\Theta_A^{(\xi)}\left(\left\Vert\bar c_5\right\Vert_{\gamma}+\epsilon_5(\gamma)\right) , \Theta_A^{(\xi)}\left(\left\Vert\bar c_6\right\Vert_{\gamma}+\epsilon_6(\gamma)\right) + \Theta_A^{(\eta)}\left(\left\Vert\bar c_9\right\Vert_{\gamma}+\epsilon_9(\gamma)\right)\right]\\ &\quad\times \left(\left\Vert\xi\right\Vert_\gamma + \left\Vert\eta\right\Vert_\gamma + \vert\lambda\vert\right)\left(\left\Vert\xi'\right\Vert_\gamma + \left\Vert\eta'\right\Vert_\gamma + \vert\lambda'\vert\right) \\
&= \max\left[\Theta_A^{(\xi)}\left(\left\Vert\bar c_5\right\Vert_{\gamma}+\epsilon_5(\gamma)\right) , \Theta_A^{(\xi)}\left(\left\Vert\bar c_6\right\Vert_{\gamma}+\epsilon_6(\gamma)\right) + \Theta_A^{(\eta)}\left(\left\Vert\bar c_9\right\Vert_{\gamma}+\epsilon_9(\gamma)\right)\right]r\left\Vert X\right\Vert_{\X_\gamma}.
\end{align*}
Thus, we define
\begin{equation}
\label{eq:Z2_instability}
Z_2=\max\left[\Theta_A^{(\xi)}\left(\left\Vert\bar c_5\right\Vert_{\gamma}+\epsilon_5(\gamma)\right) , \Theta_A^{(\xi)}\left(\left\Vert\bar c_6\right\Vert_{\gamma}+\epsilon_6(\gamma)\right) + \Theta_A^{(\eta)}\left(\left\Vert\bar c_9\right\Vert_{\gamma}+\epsilon_9(\gamma)\right)\right].
\end{equation}

\subsection{Proof of instability: The radii polynomial}

We now collect all the bounds developed above into a statement about the stability of the steady states.
\begin{proposition}
\label{prop:radii_pol_instability}
Let $\nu>1$. Assume to have computed finite sequences of Fourier coefficients $\bar v$, $\bar w$, $\bar p$, $\bar s$ and $r_\nu>0$ such that there exists a unique $(v,w,p,s)\in\left(\ell^1_\nu(\R)\right)^4$ that solves~\eqref{eq:algebraic_system} and satisfies
\begin{equation*}
\left\Vert v-\bar v\right\Vert_\nu+\left\Vert w-\bar w\right\Vert_\nu+\left\Vert p-\bar p\right\Vert_\nu+\left\Vert s-\bar s\right\Vert_\nu \leq r_\nu.
\end{equation*}
Choose $1<\gamma<\nu$ and let $\X_\gamma$, $F$, $\bar X$, $A$, $A^{\dag}$ be as  in Sections~\ref{sec:F2}-\ref{sec:A2}. Suppose to have computed the bounds $Y$, $Z_0$, $Z_1$ and $Z_2$, defined in~\eqref{eq:Y_instability}, \eqref{eq:Z0_instability}, \eqref{eq:Z1_instability} and \eqref{eq:Z2_instability} respectively. 
If there exists $r>0$ such that
\begin{equation*}
P(r)=Z_2(r)r^2 -\left(1-\left(Z_0+Z_1\right)\right)r +Y<0,
\end{equation*}
then there exists a unique zero of $F$ in $\B_{\X_\gamma}(\bar X,r)$.  If moreover $\Re(\bar\lambda)>r$ then the steady state $(u,v)$, $u=pw$, is unstable.

\end{proposition}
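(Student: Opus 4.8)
The plan is to apply Theorem~\ref{th:radii_pol} to the function $F$ built in Section~\ref{sec:F2}, with the operators $A$, $A^{\dag}$ of Section~\ref{sec:A2} and the bounds $Y$, $Z_0$, $Z_1$, $Z_2$ just derived, and then to transfer the resulting zero of $F$ into a statement about the spectrum of the linearised problem via Lemma~\ref{lem:rigorous_eigen}. First I would record that the four lemmas of Section~\ref{sec:bounds_instability} are precisely the statements that $Y$, $Z_0$, $Z_1$ and $Z_2$ satisfy the inequalities \eqref{def:Y}--\eqref{def:Z_2} with $\X=\X_\gamma$. Hence, as soon as a radius $r>0$ with $P(r)<0$ is exhibited, Theorem~\ref{th:radii_pol} guarantees that $T=I-AF$ has a unique fixed point in $\B_{\X_\gamma}(\bar X,r)$.

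To upgrade this fixed point of $T$ into a genuine zero of $F$, I would argue the injectivity of $A$ exactly as in part~i) of Proposition~\ref{prop:radii_pol}: the condition $P(r)<0$ forces $Z_0+Z_1<1$, in particular $Z_0=\left\VERT I-AA^{\dag}\right\VERT_{\X_\gamma}<1$; since the tails of $A$ and $A^{\dag}$ are exact diagonal inverses of one another, this bound collapses to $\left\VERT \hat I^n-\hat A^n\,\hat{A^{\dag}}^n \right\VERT_{\X_\gamma}<1$, which makes the finite block $\hat A^n$ invertible. Together with the fact that the diagonal tail of $A$ consists of the nonzero entries $-(\pi k)^{-2}$, this shows $A$ is injective, so the unique fixed point $X=(\xi,\eta,\lambda)$ of $T$ is the unique zero of $F$ in $\B_{\X_\gamma}(\bar X,r)$.

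For the instability conclusion I would verify the three hypotheses of Lemma~\ref{lem:rigorous_eigen}. Positivity of $(u,v)=(pw,v)$ comes from the steady-state step: the solution validated in Section~\ref{sec:results_steady_states} satisfies $\inf \bar v-r_\nu>0$ and $\inf \bar w-r_\nu>0$ as in part~ii) of Proposition~\ref{prop:radii_pol}, so $p=1/(d_1+d_{12}v)$ is well defined. The regularity requirement that every $c_j$ has Fourier coefficients in $\ell^1_\gamma(\R)$ is exactly what Section~\ref{sec:c_j} provides, since each $c_j=\bar c_j+\varepsilon_j$ with $\bar c_j$ a finite sequence and $\left\Vert\varepsilon_j\right\Vert_\gamma\le\epsilon_j(\gamma)<\infty$ for $1<\gamma<\nu$. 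Finally, the normalisation $F^{(\lambda)}(X)=\xi_{k_0}-1$ built into $F$ forces $\xi_{k_0}=1$, so the eigenfunction $(\xi,\eta)$ is nontrivial and $(\xi,\eta,\lambda)$ is a genuine solution of the eigenproblem~\eqref{eq:eigen_with_c}. It then remains to show $\Re(\lambda)>0$: from $X\in\B_{\X_\gamma}(\bar X,r)$ and the definition of $\left\Vert\cdot\right\Vert_{\X_\gamma}$ we get $\vert\lambda-\bar\lambda\vert\le\left\Vert X-\bar X\right\Vert_{\X_\gamma}\le r$, whence $\Re(\lambda)\ge\Re(\bar\lambda)-r>0$ under the hypothesis $\Re(\bar\lambda)>r$. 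Lemma~\ref{lem:rigorous_eigen} then yields the linear instability of $(u,v)$.

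The genuinely delicate work is already contained in the preceding bound derivations — in particular the splitting $F=\bar F+\E_F$ of Section~\ref{sec:A2} and the control of the first-order terms $K\varepsilon_1,K\varepsilon_2$ via Lemma~\ref{lem:estimate_derivative} inside $Z_1$ — so the proof of the proposition itself is an assembly argument. The point requiring the most care, and the one I expect to be the main conceptual obstacle, is the bookkeeping that separates the two validation radii ($r_\nu$ for the steady state, $r$ for the eigenpair) and the two weights ($\nu$ for $u,v,p$ and $\gamma<\nu$ for the $c_j$ and the eigenfunctions): one must ensure that the $\ell^1_\gamma$ regularity of the $c_j$ inherited from the $\ell^1_\nu$ enclosure of the steady state is strong enough to make every quantity entering $Y$, $Z_0$, $Z_1$, $Z_2$ finite, which is exactly why $\gamma$ is taken strictly below $\nu$.
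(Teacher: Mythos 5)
Your proposal is correct and follows essentially the same route as the paper, whose own proof is simply the one-line observation that the result follows from Theorem~\ref{th:radii_pol} together with Lemma~\ref{lem:rigorous_eigen}; you have merely filled in the details the paper leaves implicit (the verification of hypotheses \eqref{def:Y}--\eqref{def:Z_2} via the lemmas of Section~\ref{sec:bounds_instability}, the injectivity of $A$ as in Proposition~\ref{prop:radii_pol}, and the estimate $\vert\lambda-\bar\lambda\vert\le r$ giving $\Re(\lambda)>0$). No changes needed.
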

\begin{proof}
It follows as application of Theorem \ref{th:radii_pol} and Lemma~\ref{lem:rigorous_eigen}
\end{proof}

\section{Results about the instability of steady states}
\label{sec:results_instability}

In this section, we give some details about the proof of Theorem~\ref{th:unstable_11}. We recall that the parameters of~\eqref{eq:steady_states} are fixed as $\Omega=(0,1)$, $r_1=5$, $r_2=2$, $a_1=3$, $a_2=3$, $b_1=1$, $b_2=1$, $d_{12}=3$, and that $d_1=d_2=d$ is left as the bifurcation parameter. For each solution represented by a blue dot on Figure~\ref{fig:stability_diagram}, we proved the existence of an unstable eigenvalue, using the procedure described at the end of Section~\ref{sec:validated_numerics}. In particular, for each of these steady states we computed numerically an eigenvalue with positive real part, implemented the bounds described in Section~\ref{sec:bounds_instability}, and then \emph{successfully} applied Proposition~\ref{prop:radii_pol_instability} to validate the numerical eigenvalue. By successfully we mean that we found a positive $r$ such that $P(r)<0$ and checked that $\Re(\bar\lambda)>r$. For the steady states displayed previously in Figure~\ref{fig:13solutions}, we detail in Table~\ref{table:data_instability} what is the value of the unstable eigenvalue, what dimension $n$ was used for the finite dimensional projection, what $\gamma$ was chosen for the space $\X_\gamma$, and give a validation radius $r$ for which the proof is succesfull with those parameters (for the steady states where an unstable eigenvalue was actually found).

\medskip

\noindent{\it Proof of Theorem ~\ref{th:unstable_11}.}
In the script \verb+script_proof_branch_instability.m+ fix the values of the parameters $r_1=5$, $r_2=2$, $a_1=3$, $a_2=3$, $b_1=1$, $b_2=1$, $d_{12}=3$. The parameter $d_1=d_2=d$ is intended as the bifurcation parameter.  Choose a value for the finite dimensional projection $n$ and a value for the norm weight $\gamma>1$. Also select  a branch of solutions (for the names of the several branches we refer to the documentation and the \verb+readme+ file). The script loads the numerical data, computes the required bounds and verifies the existence of an interval $\mathcal I=(r_1,r_2)$  such that $P(r)<0$ for any $r\in \mathcal I$. If $\mathcal I$ is not empty then the condition $\Re(\bar\lambda)>r$ is checked. In case of successful computation, Proposition~\ref{prop:radii_pol_instability} implies that the concerned steady state is unstable.
The values for $n$ and $\gamma$ that allow the rigorous computation of all the branches depicted in the Figure~\ref{fig:proved_diagram} are available in the documentation. 

The script \verb+script_proof_steadystate_and_instability.m+ concerns the existence of steady states for a fixed value of $d$. It is used to prove the existence of 13 solutions at values $d=0.005$. 
 Figure~\ref{fig:13solutions} shows the numerical data for the 13 steady states solutions. In Table~\ref{table:data_instability} we detail the values for $n$ and  $\gamma$ used in the proof and the resulting  validation radius $r$. 
\qed

\renewcommand{\arraystretch}{1.2}
\begin{table}[ht!]
\centering
\begin{tabular}{|c|c|c|c|c|}
\hline
Label for the solution (see Figure~\ref{fig:13solutions}) & unstable eigenvalue $\lambda$ & $n$  & $\gamma$ & Validation radius \\
\hline
(a) & no unstable eigenvalue found & -- & -- & -- \\
\hline
(b) & $0.0153$ & 1000 & 1.0001 & $3.2807\times 10^{-7}$ \\
\hline
(c) & $0.2050 \pm 0.1673i$ & 1000 & 1.0001 & $7.8894\times 10^{-6}$ \\
\hline
(d) & $0.0463 \pm 0.0524i$ & 1000 & 1.0001 & $8.0803\times 10^{-6}$ \\
\hline
(e) & $0.0844$ & 1000 & 1.0001 & $2.5572\times 10^{-7}$ \\
\hline
(f) & $0.0463 \pm 0.0524i$ & 1100 & 1.0001 & $1.2238\times 10^{-5}$ \\
\hline
(g) & $0.0570 \pm 0.0390i$ & 1400 & 1.0001 & $1.0099\times 10^{-5}$ \\
\hline
(h) & $0.2743$ & 1000 & 1.0001 & $4.3919\times 10^{-9}$ \\
\hline
(i) & $0.0844$ & 1000 & 1.0001 & $2.2639\times 10^{-7}$ \\
\hline
(j) & $0.0153$ & 1000 & 1.0001 & $2.1499\times 10^{-7}$ \\
\hline
(k) & $0.0570 \pm 0.0390i$ & 1500 & 1.0001 & $1.093\times 10^{-5}$ \\
\hline
(l) & no unstable eigenvalue found & -- & -- & -- \\
\hline
(m) & $0.2050 \pm 0.1673i$ & 1000 & 1.0001 & $4.0795\times 10^{-6}$ \\
\hline
\end{tabular}
\caption{For each steady state displayed in Figure~\ref{fig:13solutions}, when an unstable eigenvalue is found we give the dimension $n$ that was used for the finite dimensional projection, the weight $\gamma$ that was chosen for the space $\X_\gamma$, and a validation radius $r$ for which the proof of the eigenvalue is successful, with those parameters $n$ and $\gamma$.}
\label{table:data_instability}
\end{table}
\renewcommand{\arraystretch}{1}

\section*{Acknowledgments}

MB acknowledge partial support from the french ``ANR blanche'' project Kibord: ANR-13-BS01-0004. Both authors also wish to acknowledge the hospitality of the Lorentz Center in Leiden, which hosted the workshop \emph{Computational Proofs for Dynamics in PDEs} during which this project started.


\begin{thebibliography}{10}

\bibitem{Ama89}
H. Amann.
\newblock Dynamic theory of quasilinear parabolic systems. III. Global existence.
\newblock {\em Math. Z.}, 202(2): 219--250, 1989.

\bibitem{dAmb15}
L. D'Ambrosio, J.-P. Lessard and A. Pugliese, 
\newblock Blow-up profile for solutions of a fourth order nonlinear equation
\newblock {\em Nonlinear Anal.}, 121(7): 280--335, 2015.

\bibitem{Ari15}
G. Arioli and H. Koch.
\newblock Existence and stability of traveling pulse solutions of the FitzHugh-Nagumo equation.
\newblock{\em  Nonlinear Anal.}, 113:51--70, 2015. 

\bibitem{Bah11}
W. Bahsoun and C. Bose.
\newblock Invariant densities and escape rates: Rigorous and computable approximations in the $L^\infty$-norm.
\newblock{\em Nonlinear Anal}, 74(13):4481-4495, 2011

\bibitem{MR3353132}
J.~B. van~den Berg, A. Desch{\^e}nes, J.-P. Lessard, and
  J. D. Mireles~James.
\newblock Stationary {C}oexistence of {H}exagons and {R}olls via {R}igorous
  {C}omputations.
\newblock {\em SIAM J. Appl. Dyn. Syst.}, 14(2):942--979, 2015.

\bibitem{BerLes15}
J. B. van den Berg and J.-P. Lessard.
\newblock Rigorous Numerics in Dynamics.
\newblock {\em Notices of the AMS}, 62(9), 2015.

\bibitem{BerMirRei15}
J.B. van den Berg, J.D. Mireles James and C. Reinhardt. 
\newblock Computing (un)stable manifolds with validated error bounds: non-resonant and resonant spectra. 
\newblock {\em Journal of Nonlinear Science}, 26(4): 1055--1095, 2015.

\bibitem{BerShe15}
J.B. van den Berg and R. Sheombarsing.
\newblock Rigorous numerics for ODEs using Chebyshev series and domain decomposition.
\newblock {\em Under review}, 2015.

\bibitem{website}
M. Breden and R. Castelli.
\newblock {MATLAB} code for ``Existence and instability of steady states for a triangular cross-diffusion system: a computer-assisted proof'', 2017. {\verb+http://www.few.vu.nl/~rci270/publications.php+.}

\bibitem{BreLesVan13}
M. Breden, J.-P. Lessard and M. Vanicat. 
\newblock Global bifurcation diagrams of steady states of systems of PDEs via rigorous numerics: a 3-component reaction-diffusion system.
\newblock {\em Acta applicandae mathematicae}, 128(1): 113--152, 2013.

\bibitem{CaiZen14}
S. Cai and J. Zeng.
\newblock A Computer-Assisted Stability Proof for a Stationary Solution of Reaction-Diffusion Equations.
\newblock {\em preprint}, arXiv:1408.4678, 2014.

\bibitem{Cap15}
M. J. Capi\'nski and P. Zgliczy\'nski. 
\newblock Geometric proof for normally hyperbolic invariant manifolds. 
\newblock {\em J. Differential Equations}, 259(11):6215--6286, 2015.

\bibitem{Cas16}
R. Castelli.
Rigorous computation of non-uniform patterns for the 2-dimensional Gray-Scott reaction-diffusion equation.
\newblock {\em Under review}, 2016.

\bibitem{CasLes13}
R. Castelli and J.-P. Lessard.
\newblock Rigorous Numerics in Floquet Theory: Computing Stable and Unstable Bundles of Periodic Orbits.
\newblock{\em SIAM Journal on Applied Dynamical Systems}, 12(1): 204--245, 2013.

\bibitem{CasLesMir16}
R. Castelli, J.-P. Lessard and J.D. Mireles James.
\newblock Parameterization of invariant manifolds for periodic orbits (II): a-posteriori analysis and computer assisted error bounds.
\newblock {\em Under review}, 2016.

\bibitem{CasTei16}
R. Castelli and H. Teismann.
\newblock Rigorous numerics for {NLS}: bound states, spectra, and
  controllability.
\newblock {\em Physica D: Nonlinear Phenomena}, 334:158--173, 2016.

\bibitem{ChoLuiYam03}
Y. Choi, R. Lui and Y. Yamada.
\newblock Existence of global solutions for the Shigesada-Kawasaki-Teramoto model with weak cross-diffusion.
\newblock {\em Discrete and Continuous Dynamical Systems}, 9(5): 1193--1200, 2003.

\bibitem{DayLesMis07}
S. Day, J.-P. Lessard and K. Mischaikow.
\newblock Validated continuation for equilibria of PDEs.
\newblock {\em SIAM Journal on Numerical Analysis}, 45(4): 1398--1424, 2007.

\bibitem{DevLepMouTre15}
L. Desvillettes, T. Lepoutre, A. Moussa and A. Trescases.
\newblock On the entropic structure of reaction-cross diffusion systems
\newblock {\em Communications in Partial Differential Equations}, 40(9): 1705--1747, 2015.

\bibitem{DevTre15}
L. Desvillettes and A. Trescases.
\newblock New results for triangular reaction cross diffusion system.
\newblock {\em Journal of Mathematical Analysis and Applications}, 430(1): 32--59, 2015.


\bibitem{Gal16}
S. Galatolo and I. Nisoli.
\newblock{Rigorous computation of invariant measures and fractal dimension for maps with contracting fibers: 2D Lorenz-like maps},
\newblock{\em Ergodic Theory and Dynamical Systems}, 36(6):1865--1891, 2016.


\bibitem{MR2776917}
M. Gameiro and J.-P. Lessard.
\newblock Rigorous computation of smooth branches of equilibria for the three
  dimensional {C}ahn-{H}illiard equation.
\newblock {\em Numer. Math.}, 117(4):753--778, 2011.

\bibitem{GamLes11}
M. Gameiro and J.-P. Lessard. 
 \newblock Existence of secondary bifurcations or isolas for PDEs
\newblock {\em  Nonlinear Anal.}, 74(12): 4131--4137, 2011

\bibitem{GamLes16}
M. Gameiro and J.-P. Lessard. 
\newblock A posteriori verification of invariant objects of evolution equations: periodic orbits in the Kuramoto-Sivashinsky PDE.
\newblock {\em SIAM Journal on Applied Dynamical Systems}, 16(1): 687--728, 2017.

\bibitem{Gid04}
M. Gidea and P.  Zgliczy\'nski.
\newblock  Covering relations for multidimensional dynamical systems,
\newblock {\em J. Differential Equations} 202:33--58,  2004

\bibitem{IidMimNim06}
M. Iida, M. Mimura and H. Ninomiya.
\newblock Diffusion, cross-diffusion and competitive interaction.
\newblock {\em Journal of Mathematical Biology}, 53(4): 617--641, 2006. 

\bibitem{IzuMim08}
H. Izuhara and M. Mimura.
\newblock Reaction-diffusion system approximation to the cross-diffusion competition system.
\newblock {\em Hiroshima Mathematical Journal}, 38(2): 315--347, 2008.

\bibitem{Jun16}
A. J\"ungel.
\newblock Cross-Diffusion Systems. 
\newblock {\em Entropy Methods for Diffusive Partial Differential Equations}, Springer International Publishing, 69--108, 2016.

\bibitem{KinWatNak14}
T. Kinoshita, Y.Watanabe and M. T. Nakao. 
\newblock An improvement of the theorem of a posteriori estimates for inverse elliptic operators.
\newblock {\em Nonlinear Theory and Its Applications, IEICE}, 5(1): 47--52, 2014.

\bibitem{Kis82}
K. Kishimoto. 
\newblock The diffusive Lotka-Volterra system with three species can have a stable non-constant equilibrium solution. 
\newblock {\em Journal of Mathematical Biology}, 16(1): 103--112, 1982.

\bibitem{KisWei85}
K. Kishimoto and H. F. Weinberger.
\newblock The Spatial Homogeneity of Stable Equilibria
of Some Reaction-Diffusion Systems on Convex Domains.
\newblock {\em Journal of Differential Equations}, 58(1): 15--21, 1985.

\bibitem{Les10}
J.-P. Lessard. 
\newblock Recent advances about the uniqueness of the slowly oscillating periodic solutions of Wright’s equation.
\newblock {\em Journal of Differential Equations}, 248 (5): 992--1016, 2010.

\bibitem{LesMirRan16}
J.-P. Lessard, J.D.M. James and J. Ransford.
\newblock Automatic differentiation for Fourier series and the radii polynomial approach.
\newblock {\em Physica D: Nonlinear Phenomena}, 334: 174--186, 2016.

\bibitem{LlaMir16}
R. de la Llave and J. D. Mireles James. 
\newblock Connecting orbits for compact infinite dimensional maps: Computer assisted proofs of existence.
\newblock {\em SIAM Journal on Applied Dynamical Systems}, 15(2): 1268--1323, 2016.

\bibitem{LouNiYot04}
Y. Lou, W.-M. Ni and S. Yotsutani.
\newblock On a limiting system in the Lotka-Volterra competition with cross-diffusion.
\newblock {\em Discrete and Continuous Dynamical Systems}, 10(1/2): 435--458, 2004.

\bibitem{MatMim83}
H. Matano and M. Mimura.
\newblock Pattern formation in competition-diffusion systems in nonconvex domains.
\newblock {\em Publications of the Research Institute for Mathematical Sciences}, 19(3): 1049--1079, 1983


\bibitem{McK09}
P.  J.  McKenna,  F.  Pacella,  M.  Plum,  and  D.  Roth.
\newblock A Uniqueness Result for a Semilinear Elliptic Problem: A Computer-assisted Proof.
\newblock{\em  Journal of Differential Equations}, 247: 2140--2162, 2009 



\bibitem{MimEiFang91}
M. Mimura, S.-I.  Ei  and Q. Fang.
\newblock Effect of domain-shape on coexistence problems in a competition-diffusion system
\newblock{\em Journal of Mathematical Biology}, 29(3): 219--237, 1991.


\bibitem{MimKaw80}
M. Mimura and K. Kawasaki.
\newblock Spatial segregation in competitive interaction-diffusion equations.
\newblock {\em Journal of Mathematical Biology}, 9(1): 49--64, 1980.

\bibitem{MimNisTesTsu84}
M. Mimura, Y. Nishiura, A. Tesei and T. Tsujikawa. \newblock Coexistence problem for two competing species models with density-dependent diffusion.
\newblock {\em Hiroshima Mathematical Journal}, 14(2): 425--449, 1984.

\bibitem{Mir16}
J. D. Mireles James.
\newblock Fourier–Taylor Approximation of Unstable Manifolds for Compact Maps: Numerical Implementation and Computer-Assisted Error Bounds.
\newblock {\em Found. Comp. Math.}, 1--57, 2016.

\bibitem{Mis05}
K. Mischaikow, M. Mrozek and P. Pilarczyk.
\newblock Graph  approach  to  the  computation  of  the homology of continuous maps
\newblock{\em  Found. Comp. Math.}, 5:199--229, 2005


\bibitem{Mro96}
M. Mrozek.
\newblock  Topological invariants, multivalued maps and computer assisted proofs in dynamics.
\newblock {\em Comput. Math. Appl.} 32: 83--104, 1996


\bibitem{Plu01}
M. Plum.
\newblock Computer-assisted enclosure methods for elliptic differential equations
\newblock {\em Linear Algebra and its Applications}, 324(1-3): 147--187, 2001.

\bibitem{Rum99}
S.M. Rump.
\newblock {\em INTLAB - INTerval LABoratory.}
\newblock Developments in Reliable Computing, Kluwer Academic Publishers, Dordrecht, pp. 77--104, 1999. {\verb+http://www.ti3.tuhh.de/rump/+}.

\bibitem{RyuAhn03}
K. Ryu and I. Ahn.
\newblock Coexistence theorem of steady states for nonlinear self-cross diffusion systems with competitive dynamics.
\newblock {\em Journal of mathematical analysis and applications}, 283(1): 46--65, 2003

\bibitem{ShiKawTer79}
N. Shigesada, K. Kawasaki and E. Teramoto.
\newblock Spatial segregation of interacting species.
\newblock {\em Journal of Theoretical Biology}, 79(1): 83--99, 1979.

\bibitem{Yam01}
N. Yamamoto.
\newblock A simple method for error bounds of eigenvalues of symmetric matrices.
\newblock {\em  Linear Algebra and its Applications}, 324(1):227--234, 2001.


\bibitem{Yam98}
N. Yamamoto.
\newblock A numerical verification method for solutions of boundary value problems with local uniqueness by Banach's fixed-point theorem.
\newblock {\em SIAM Journal of Numerical Analysis}, 35: 2004--2013, 1998.

\bibitem{Zgl01}
P. Zgliczy\'nski, 
\newblock On periodic points for systems of weakly coupled 1-dim maps.
\newblock{\em  Nonlinear Anal.}, 46(7): 1039-1062, 2001



\end{thebibliography}
\end{document}